\newtheorem{theorem}{Theorem}[section] 
\numberwithin{theorem}{section}
\theoremstyle{thmstyleone}%
\newtheorem{corollary}[theorem]{Corollary}
\newtheorem{proposition}[theorem]{Proposition}%
\newtheorem{assumption}{Assumption}%
\theoremstyle{thmstyletwo}%
\newtheorem{example}{Example}%
\newtheorem{remark}[theorem]{Remark}%
\numberwithin{theorem}{section}
\newtheorem{lemma}[theorem]{Lemma}
\theoremstyle{thmstylethree}%
\def\R{{\mathbb R}}
\def\F{{\mathcal F}}
\def\argmax{{\rm argmax}}
\begin{document}
\title[Article title]{Accelerated Dinkelbach Method}


\author[a]{\fnm{Hanzhi} \sur{Chen}}\email{BYchz@buaa.edu.cn}

\author[a]{\fnm{Chuyue} \sur{Zheng}}\email{21091003@buaa.edu.cn}

\author*[a]{\fnm{Yong} \sur{Xia}}\email{yxia@buaa.edu.cn}

\affil[a]{\orgdiv{LMIB of the Ministry of Education}, \orgname{School of Mathematical Sciences}, \orgaddress{\street{Beihang University}, \city{Beijing}, \postcode{100191}, \state{}
\country{People's Republic of China}}}

\abstract{
The classical Dinkelbach method (1967) solves fractional programming via a parametric approach, generating a  decreasing upper bound sequence that converges to the optimum. Its important variant, the interval Dinkelbach method (1991), constructs convergent upper and lower bound sequences that bracket the solution and achieve quadratic and superlinear convergence, respectively, under the assumption that the parametric function is twice continuously differentiable. 
However, this paper demonstrates that a minimal correction, applied solely to the upper bound iterate, is sufficient to 
{\color{black}boost the convergence of the method, achieving  superquadratic and cubic rates for the upper and lower bound sequences, respectively.} By strategically integrating this correction, we develop a globally convergent, non-monotone, and accelerated Dinkelbach algorithm—the first of its kind, to our knowledge. Under sufficient differentiability, the new method achieves an asymptotic average convergence order of at least $\sqrt{5}$ per iteration, surpassing the quadratic order of the original algorithm. Crucially, this acceleration is achieved while maintaining the key practicality of solving only a single subproblem per iteration.}

\keywords{
Fractional programming,
 Dinkelbach method, 
 Interval  Dinkelbach method,
Newton-type methods,  
Global convergence,
Convergence rate}



\maketitle
\section{Introduction}
In this paper, we investigate the fractional programming problem
\begin{equation}\label{eq:Fractional_problem}\tag{FP}
\alpha^{*} = \min_{x \in \mathcal{F}} \frac{f_1(x)}{f_2(x)},
\end{equation}
where $f_1 : \mathbb{R}^n \to \mathbb{R}$ and $f_2 : \mathbb{R}^n \to \mathbb{R}$ are continuous functions defined on a compact set $\mathcal{F} \subset \mathbb{R}^n$, with $f_2(x) > 0$ for all $x \in \mathcal{F}$.  
\eqref{eq:Fractional_problem} has at least one optimal solution, denoted by $x^{*}$.

Fractional programming originated in the mid-20th century, emerging from foundational work in economics, operations research, and game theory.
Notably,  John von Neumann's seminal work in 1928 on the minimax theorem \cite{neumann1928zur}  implicitly addressed fractional optimization, providing an important mathematical basis for the field.
The formal development of fractional programming began in 1962 when Charnes and Cooper \cite{charnes1962programming}  introduced linear fractional programming, focusing on the optimization of ratios of linear functions. This was followed by Dinkelbach's landmark contribution \cite{Dinkelbach1967} in 1967, which established the theoretical underpinnings for nonlinear fractional programming and significantly expanded the scope of the field. Since then, fractional programming has garnered sustained research interest due to its broad applicability and mathematical richness.
Comprehensive treatments of fractional programming are available in several influential references. For instance, foundational developments and advanced methodologies can be found in works such as \cite{schaible1983fractional,schaible1995fractional,rodenas1999extensions,frenk2005fractional,stancu2012fractional,bajalinov2013linear}. These contributions have collectively shaped the evolution of fractional programming into a mature and versatile area of optimization theory.

As demonstrated by Jagannathan~\cite{jagannathan1966parametric}, Dinkelbach~\cite{Dinkelbach1967}, and Geoffrion~\cite{geoffrion1967bicriterion}, \eqref{eq:Fractional_problem} is equivalently formulated by identifying the zero of its associated parametric function $g:\mathbb{R} \rightarrow \mathbb{R}$ 
\begin{equation}\label{eq:parametric}
   g(\alpha) = \max_{x \in \mathcal{F}} \left\{ -f_1(x) + \alpha f_2(x) \right\}.
\end{equation}
This function exhibits the following remarkable properties \cite{Dinkelbach1967,schaible1976fractional}.

\begin{itemize}
    \item $g$ is strictly increasing, with a unique root $\alpha^*$ that corresponds to the global optimum of \eqref{eq:Fractional_problem}.

    \item $g$ is convex. For any $x_{0} \in \argmax_{x \in \mathcal{F}} \left\{ -f_1(x) + \alpha_{0} f_2(x) \right\},$ it holds  $f_2(x_{0}) \in \partial g(\alpha_{0}),$ where $\partial g(\alpha_{0})$ denotes the subdifferential of $g$ at $\alpha_{0}$. Furthermore, if $g$ is differentiable at $\alpha_{0}$, then 
$\partial g(\alpha_0)=\{g'(\alpha_0)\}=\{f_{2}(x_{0})\}$.   
\end{itemize}

{\color{black}
 \subsection{Preliminaries}
This subsection establishes the notation, core concepts, and a foundational lemma necessary for the subsequent analysis.

\textbf{Notation.} 
Let \(C^k[a,b]\) denote the space of functions with continuous \(k\)-th derivative on \([a,b]\).
For function \(g:\R\to \R\) and distinct points \(\alpha, \beta \in \mathbb{R}\), the first-order divided difference is defined as
\[
\delta g(\alpha, \beta) := \frac{g(\alpha) - g(\beta)}{\alpha - \beta}.
\]

Consider a sequence \(\{\alpha_k\} \subset \mathbb{R}\) converging to \(\alpha^*\). We characterize its convergence rate using two related concepts.
The sequence is said to converge with order \(s \geq 1\) if there exists a constant \(C > 0\) such that for all \(k\),
\[\frac{|\alpha_{k+1} - \alpha^*|}{|\alpha_k - \alpha^*|^s} \leq C.\]
The sequence is said to converge with asymptotic order \(s \geq 1\) if there exists a constant \(C \geq 0\) such that
\[
\limsup_{k \to \infty} \frac{|\alpha_{k+1} - \alpha^*|}{|\alpha_k - \alpha^*|^s} = C.
\]
 Note that \(C = 0\) is permitted for asymptotic order, indicating particularly fast convergence.
Specific terminology applies based on the order \(s\) and constant \(C\):
\begin{itemize}
    \item Linear convergence: order $s=1$ with \(0 < C < 1\),
    \item Superlinear convergence: asymptotic order $s=1$ with \(C = 0\),
    \item Quadratic convergence: order $s=2$,
    \item Cubic convergence: order $s=3$.
\end{itemize}

We also employ the Landau notation to characterize the asymptotic behavior of various quantities in our analysis.
For sequences $\{a_k\}$ and $\{b_k\}$ converging to zero:
\begin{itemize}
    \item $a_k = O(b_k)$ indicates that there exist constants $C > 0$ and $N > 0$ such that $|a_k| \leq C|b_k|$ for all $k \geq N$.
    \item $a_k = o(b_k)$ indicates that $a_k$ decays faster than $b_k$, satisfying $\lim_{k \to \infty} a_k/b_k = 0$.
\end{itemize}

We now provide a key Lemma on $\delta g$, where $g$ is defined in \eqref{eq:parametric}.
\begin{lemma}\label{lemma:subgradient_property}
For any distinct $\alpha<\beta$, the divided difference $\delta g(\alpha,\beta)$ satisfies the following subgradient bounds:
\begin{align}
    f_2(x_{\alpha}) \leq \delta g(\alpha, \beta) =\delta g(\beta,\alpha)\leq f_2(x_{\beta}),
\end{align}
where
\[
x_{t}\in \argmax_{x\in\F}\{-f_{1}(x)+tf_{2}(x)\},\quad t=\alpha,\,\beta.
\]
\end{lemma}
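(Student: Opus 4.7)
The plan is to exploit the two properties of $g$ listed in the preliminaries: convexity, together with the fact that for any $t$ and any maximizer $x_t\in\arg\max_{x\in\F}\{-f_1(x)+tf_2(x)\}$, we have $f_2(x_t)\in\partial g(t)$. Once these are in hand, the claimed chain of inequalities reduces to the standard ``three-slopes'' inequality for convex functions, so no additional structural work on the specific form of $g$ is required.

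First, the symmetry $\delta g(\alpha,\beta)=\delta g(\beta,\alpha)$ is purely algebraic, since
\[
\frac{g(\alpha)-g(\beta)}{\alpha-\beta}=\frac{g(\beta)-g(\alpha)}{\beta-\alpha}.
\]
Next, I would apply the subgradient inequality at $\alpha$ with test point $\beta$: using $f_2(x_\alpha)\in\partial g(\alpha)$, convexity yields
\[
g(\beta)\ge g(\alpha)+f_2(x_\alpha)(\beta-\alpha).
\]
Dividing by $\beta-\alpha>0$ gives exactly $\delta g(\alpha,\beta)\ge f_2(x_\alpha)$. Symmetrically, the subgradient inequality at $\beta$ with test point $\alpha$, using $f_2(x_\beta)\in\partial g(\beta)$, gives
\[
g(\alpha)\ge g(\beta)+f_2(x_\beta)(\alpha-\beta),
\]
and dividing by $\beta-\alpha>0$ (after rearranging signs) yields $\delta g(\alpha,\beta)\le f_2(x_\beta)$. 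Combining these two bounds with the symmetry gives the claimed inequalities.

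There is no real obstacle here; the only subtle point is that $g$ need not be differentiable, so one must be careful to use the subdifferential formulation (supplied in the second bullet of the preliminaries) rather than the derivative. In particular, one should not invoke a mean value theorem for $g'$, since $g'$ may fail to exist at $\alpha$ or $\beta$. As long as the proof is phrased in terms of the subgradient inequality, the argument goes through cleanly in two lines of convex analysis.
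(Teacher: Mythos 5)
Your proof is correct: the two subgradient inequalities at $\alpha$ and $\beta$ (using $f_2(x_\alpha)\in\partial g(\alpha)$ and $f_2(x_\beta)\in\partial g(\beta)$ from the preliminaries), divided by $\beta-\alpha>0$, give exactly the claimed bounds, and the symmetry of $\delta g$ is immediate. The paper states this lemma without proof, and your argument is precisely the standard one it implicitly relies on, so there is nothing to add.
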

Since $f_{2}>0$,
the lemma also indicates the following  Corollary.
\begin{corollary}
    \label{coro_supply}
    For any distinct $\alpha,\beta\in\mathbb{R}$, it holds $\delta g(\alpha,\beta)>0$.
\end{corollary}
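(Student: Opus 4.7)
The plan is to deduce the Corollary directly from Lemma \ref{lemma:subgradient_property} together with the standing hypothesis that $f_2(x) > 0$ for all $x \in \mathcal{F}$. The statement of the Corollary is symmetric in $\alpha$ and $\beta$, so I would first invoke the symmetry of the divided difference, $\delta g(\alpha,\beta) = \delta g(\beta,\alpha)$, which is immediate from its definition and is also recorded explicitly in the lemma. This reduction lets me assume without loss of generality that $\alpha < \beta$.

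With the ordering $\alpha < \beta$ fixed, I would apply the lower bound in Lemma \ref{lemma:subgradient_property} to obtain
\[
\delta g(\alpha,\beta) \;\geq\; f_2(x_\alpha),
\]
where $x_\alpha \in \argmax_{x \in \mathcal{F}}\{-f_1(x) + \alpha f_2(x)\}$ exists because $\mathcal{F}$ is compact and the objective is continuous. Since $x_\alpha \in \mathcal{F}$ and the global hypothesis on $f_2$ guarantees $f_2(x) > 0$ everywhere on $\mathcal{F}$, we conclude $f_2(x_\alpha) > 0$, and hence $\delta g(\alpha,\beta) > 0$.

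There is essentially no obstacle here: the result is a one-line consequence of the lemma. The only point worth flagging is that the argument relies on the \emph{existence} of the maximizer $x_\alpha$, which is where the compactness of $\mathcal{F}$ and continuity of $f_1, f_2$ enter. Once existence is secured, strict positivity of $f_2$ on $\mathcal{F}$ closes the argument immediately for both orderings of $\alpha$ and $\beta$.
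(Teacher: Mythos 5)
Your proof is correct and follows exactly the paper's route: the paper derives the corollary in one line from Lemma~\ref{lemma:subgradient_property} and the standing assumption $f_2>0$ on $\mathcal{F}$, which is precisely your argument (with the symmetry reduction and the existence of the maximizer spelled out). No issues.
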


\subsection{Overview of Dinkelbach method}
The primary approach to solving \( g(\alpha) = 0 \) is based on the generalized  Newton's method, where the derivative is replaced by a subgradient. Isbell and Marlow~\cite{isbell1956attrition} first applied this method to linear fractional programming, and  Dinkelbach~\cite{Dinkelbach1967} later extended it to general nonlinear cases through the following iterative scheme:
\begin{equation}
\label{eq:dinkelbach}
    \alpha_{k+1} = \alpha_k - \frac{g(\alpha_k)}{f_2(x_k)},\quad k=0,1,\cdots,
\end{equation}
where 
$\alpha_{0}=f_{1}(x_{0})/f_{2}(x_{0})$ for a feasible point $x_{0}\in \F$ and 
$x_{k} \in \argmax_{x\in\F}\{-f_{1}(x) + \alpha_{k}f_{2}(x)\}$.
Observing the identity $g(\alpha_k) = -f_1(x_k) + \alpha_k f_2(x_k)$, the update rule \eqref{eq:dinkelbach}  simplifies to
\begin{equation}
\label{eq:simplified}
    \fbox{
    $\alpha_{k+1} = \frac{f_1(x_k)}{f_2(x_k)}, \quad k=0,1,\cdots.$}
\end{equation}
This procedure is now universally known as the Dinkelbach method. A geometric interpretation of the iteration is provided in Figure~\ref{fig:algorithm_1}.
\begin{figure}
\begin{center}
\begin{tikzpicture}
\draw[->] (-4,0) -- (4,0) node[anchor=west]  {$\alpha$};
\draw[domain=-3:2.4]plot(\x,{2^(\x)-1});
\draw[fill=black] (2,3) circle (1.5pt) node[right] {$(\alpha_{k},g(\alpha_{k}))$}; 
\draw[dashed, line width=0.5]
[domain=0.5:2.4]plot(\x,{4*0.693147*(\x-2)+3})node[below right] {$g(\alpha_k)+f_2(x_k)(\alpha-\alpha_k)=0$};
\draw[fill=black] (0,0)circle (1.5pt);
\draw[fill=black] (0.9169787,0) circle (1.5pt)node[below right] {$(\alpha_{k+1},0)$}; 
\draw[dashed, line width=0.5][domain=-1:2]plot(0.9169787,\x);
\draw[fill=black] (0.9169787,0.8881569) circle (1.5pt) node[above left] {$(\alpha_{k+1},g(\alpha_{k+1}))$};
\end{tikzpicture}
\caption{
The iterate $\alpha_{k+1}$ is defined as the root of the tangent line to $g$ at $\alpha_k$.}
\label{fig:algorithm_1}
\end{center}
\end{figure}
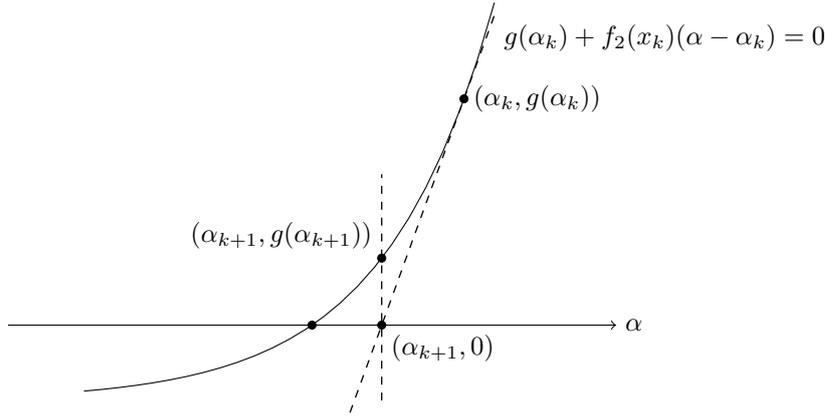

The Dinkelbach method generates a strictly decreasing upper bound sequence {\color{black}$\{\alpha_{k}\}$} that converges to $\alpha^*$.
Subsequent theoretical analyses \cite{crouzeix1985algorithm,flachs1985generalized,borde1987convergence,benadada1993rate,schaible1976fractional} have rigorously established convergence rates under specified regularity conditions. Key known results include:
\begin{itemize}
    \item $\{\alpha_k\}$  achieves superlinear convergence,
    \item 
    The convergence rate becomes quadratic if \(g\) is twice continuously differentiable.
\end{itemize}

We note a point of clarification regarding the convergence rate.
Benadada et al.
\cite{benadada1993rate} report a convergence rate of 1.618 or 2 under certain conditions on 
$f_1$, $f_2$, and $\mathcal{F}$.
 This claim requires clarification. The value 1.618 specifically applies to the generalized Dinkelbach method for multi-objective fractional programming.
In contrast, for the single-objective case, Theorem 4.1 in \cite{borde1987convergence} states that the convergence rate is at least quadratic under the same conditions.
Notably, \cite{borde1987convergence} was published before \cite{benadada1993rate}.  

The method remains widely applied across multiple domains. Recent deployments include \cite{You2009,Jia2020,Azarhava2020,Zhou2021,lu2022dinkelbach,jin2024fractional,wang2023energy}. 
Iteration~\eqref{eq:simplified} can also be directly derived by applying the fixed-point method to solve 
\[
 \alpha^*= \frac{f_1(x^*)}{f_2(x^*)}.
\]
See \cite{beck2009convex,yang2018efficiently} for an in-depth case study involving Tikhonov regularized total least squares.

The escalating scale and complexity of modern optimization problems have intensified the demand for accelerated versions. 
In this vein,  Dadush et al.~\cite{Dadush2023}  proposed a look-ahead Dinkelbach method that reduces computational complexity in linear fractional combinatorial optimization, although it does not improve the theoretical convergence rate in the continuous case. Yang and Xia~\cite{yang2024polynomially} later adapted this look-ahead strategy to unconstrained \(\{-1, 1\}\)-type quadratic optimization.

 \subsection{Overview of interval Dinkelbach method}

Although the standard Dinkelbach method is widely used, its limitation to generating only an upper bound sequence restricts its applicability. Yamamoto et al.~\cite{yamamoto2007efficient}  experimentally demonstrated that hybrid architectures, which integrate the classical Dinkelbach scheme within a two-sided bounding framework, can effectively solve larger-scale convex-convex fractional programming problems, showing significant advantages over the single-bound approach.

Inspired by this finding, 
we review 
an important modification by
Pardalos and Phillips~\cite{Pardalos1991}, which we refer to as the interval Dinkelbach method. This framework iteratively constructs sequences of both upper and lower bounds, enclosing the solution within a contracting interval. The initial interval $[\gamma_0, \alpha_0]$ satisfies $g(\gamma_0) \leq 0 \leq g(\alpha_0)$.

\begin{itemize}
    \item The lower bound sequence $\{\gamma_{k}\}$ is generated by the Secant method:
\begin{equation}\label{eq:lower bound sequence}
    \fbox{
    $\gamma_{k+1}=\gamma_{k}-g(\gamma_{k})\frac{\alpha_{k}-\gamma_{k}}{g(\alpha_{k})-g(\gamma_{k})}, \quad k=0,1,\cdots.$}
    \end{equation}
    
    \item The upper bound sequence $\{\alpha_{k}\}$ is given by the original Dinkelbach update:
\begin{equation}\label{eq:upper bound sequence}
    \fbox{
    $\alpha_{k+1}=\frac{f_{1}(x_{k})}{f_{2}(x_{k})}, \quad x_{k}\in \argmax_{x\in \F}\{-f_{1}(x)+\alpha_{k}f_{2}(x)\}, \quad k=0,1,\cdots.$}
    \end{equation}
\end{itemize} 

The iterative process of the interval Dinkelbach method is illustrated in Figure~\ref{fig:algorithm_2}. Under the assumption that $g$ is twice continuously differentiable on $[\gamma_0, \alpha_0]$, the method exhibits strong convergence properties. The upper bound sequence $\{\alpha_k\}$ converges quadratically, while the lower bound sequence $\{\gamma_k\}$ converges superlinearly, 
with both sequences being monotonic (decreasing and increasing, respectively).

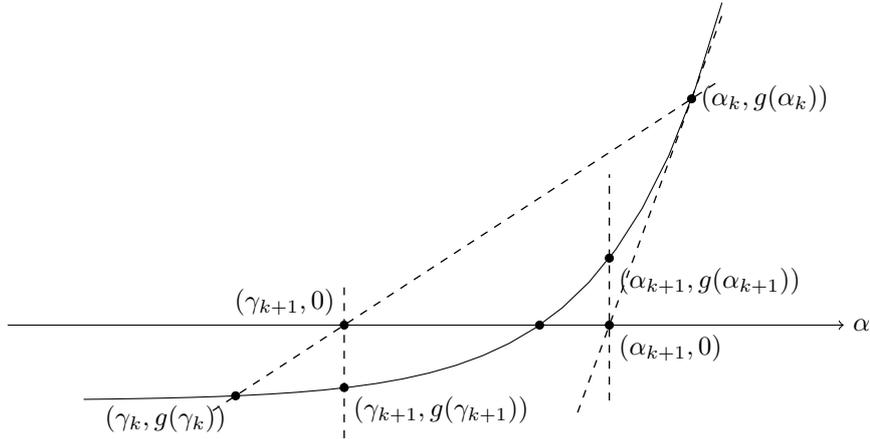
\begin{figure}
\begin{center}
\begin{tikzpicture}
\draw[->] (-7,0) -- (4,0)node[anchor=west]  {$\alpha$};
\draw[domain=-6:2.4]plot(\x,{2^(\x)-1});
\draw[fill=black] (0,0)circle (1.5pt);
\draw[fill=black] (2,3) circle (1.5pt)node[right] {$(\alpha_{k},g(\alpha_{k}))$};
\draw[dashed, line width=0.5][domain=0.5:2.4]plot(\x,{4*0.693147*(\x-2)+3});
\draw[fill=black] (0.9169787,0) circle (1.5pt)node[below right] {$(\alpha_{k+1},0)$}; 
\draw[dashed, line width=0.5][domain=-1:2]plot(0.9169787,\x);
\draw[fill=black] (0.9169787,0.8881569) circle (1.5pt) node[below right] {$(\alpha_{k+1},g(\alpha_{k+1}))$};
\draw[fill=black] (-4,-0.9375) circle (1.5pt) node[below left] {$(\gamma_{k},g(\gamma_{k}))$};
\draw[dashed, line width=0.5][domain=-4.3:2.4]plot(\x,{3.9375*(\x-2)/6+3});
\draw[fill=black] (-2.5714,0) circle (1.5pt) node[above left] {$(\gamma_{k+1},0)$};
\draw[dashed, line width=0.5][domain=-1.5:0.5]plot(-2.5714,\x);
\draw[fill=black] (-2.5714,-0.82867317) circle (1.5pt) node[below right] {$(\gamma_{k+1},g(\gamma_{k+1}))$};
\end{tikzpicture}
\end{center}
\caption{
The interval Dinkelbach method generates  monotonic and convergent upper and lower bound sequences. Specifically, the upper bound sequence $\{\alpha_k\}$ is derived from the original Dinkelbach method~\eqref{eq:upper bound sequence}, while the lower bound sequence $\{\gamma_k\}$ is computed using the Secant method \eqref{eq:lower bound sequence}.}
\label{fig:algorithm_2}
\end{figure}

\subsection{Our contributions}
{\color{black}
The main contributions of this paper can be summarized as follows.}

{\color{black}
First, we present an accelerated interval Dinkelbach method. This method introduces a minor modification to the upper bound iteration of the classical interval Dinkelbach method, while preserving the original lower bound iteration. 
Under the same standard assumption of twice continuous differentiability required by the classical method, the accelerated version achieves  faster convergence rates. Specifically, 
\begin{itemize}
\item
The lower bound sequence
$\{\gamma_k\}$ achieves cubic convergence, a substantial advancement over the superlinear convergence of the conventional method.
    \item 
     The upper bound sequence $\{\alpha_k\}$ attains a superquadratic convergence rate, improving upon its classical quadratic counterpart.
\end{itemize}}

Second, we propose an accelerated Dinkelbach method, built upon the accelerated Newton method introduced by Fernández-Torres~\cite{FernandezTorres2015} and the modification from the accelerated interval Dinkelbach method. Specifically,
\begin{itemize}
    \item  We establish that this method generates a non-monotone yet globally convergent sequence-the first accelerated Dinkelbach algorithm, to our knowledge, possessing both properties.
    \item 
    Our method achieves an  asymptotic average convergence order of $\sqrt{5}\approx 2.236$, under sufficient differentiability assumptions, outperforming the original Dinkelbach method's quadratic order.
    \item 
Furthermore, the convergence behavior exhibits a periodic structure depending on the higher-order derivatives of $g$ at $\alpha^*$, leading to even higher asymptotic average orders in specific cases.
\end{itemize}

\subsection{Organization}
The remainder of the paper is organized as follows. Section~\ref{section: Modified Dinkelbach} develops the accelerated version of the interval Dinkelbach method along with its convergence analysis. Section~\ref{section:Accelerated dinkelbach method} develops the accelerated Dinkelbach method and conducts its convergence analysis.
{\color{black}
Section~\ref{section:Conclusion} summarizes the main contributions of this work, outlines promising future research directions, and presents two open problems.}

\section{Accelerated interval  Dinkelbach method}\label{section: Modified Dinkelbach}

In this section, we propose an accelerated interval Dinkelbach method. The motivation for this acceleration stems from two key observations.

First, the upper bound iteration in the classical method, given by $\alpha_{k+1} = {f_1(x_k^{\alpha})}/{f_2(x_k^{\alpha})}$, depends exclusively on $\alpha_k$ and does not utilize the information from the lower bound sequence $\{\gamma_k\}$. This represents a potential source of inefficiency.

Second, we note that the computation of $g(\gamma_k)$ via the subproblem~\eqref{eq:parametric}  inherently yields the maximizer $x_k^{\gamma}$, and consequently, the values $f_1(x_k^{\gamma})$ and $f_2(x_k^{\gamma})$ are obtained as natural byproducts. Crucially, the ratio ${f_1(x_k^{\gamma})}/{f_2(x_k^{\gamma})}$ provides a new estimate for the root $\alpha^*$.  The accelerated method leverages this underutilized information by strategically incorporating the ratio into the upper bound iteration.

We now present the accelerated interval Dinkelbach method.

\begin{itemize}
    \item 
    The lower bound sequence $\{\gamma_k\}$ is obtained using the Secant method, the same as the original interval Dinkelbach method.
\begin{equation}\label{eq:lower bound sequence next}
\fbox{$\gamma_{k+1}=\gamma_{k}-g(\gamma_{k})\frac{\alpha_{k}-\gamma_{k}}{g(\alpha_{k})-g(\gamma_{k})}, \quad k=0,1,\cdots.$}
    \end{equation}
    \item 
    The upper bound sequence $\{\alpha_k\}$  is determined by finding the minimum zero-crossing of the two tangent lines constructed at the current lower and upper bounds, deviating from conventional methods that use only the upper bound tangent lines zero-crossing.
\begin{equation}\label{eq:upper bound sequence next}
\fbox{$\alpha_{k+1}=\min\left\{ \alpha_{k}-\frac{g(\alpha_{k})}{f_{2}(x_{k}^{\alpha})}, \gamma_{k+1}-\frac{g(\gamma_{k+1})}{f_{2}(x_{k+1}^{\gamma})}\right\}, \quad k=0,1,\cdots,$}
    \end{equation}
    \noindent where
    \[x_{k+1}^{\gamma}\in \argmax_{x\in\F}\{-f_{1}(x)+\gamma_{k+1}f_{2}(x)\},\quad x_{k}^{\alpha}\in \argmax_{x\in\F}\{-f_{1}(x)+\alpha_{k}f_{2}(x)\}.\]
\end{itemize} 

Figure~\ref{fig:algorithm 3} provides a schematic overview of the algorithm's workflow.

\begin{figure}[!h]
\begin{center}
\begin{tikzpicture}
\draw[->] (-4,0) -- (4,0) node[anchor=west] {$\alpha$};
\draw[domain=-4.5:4]plot(\x,{1.5^(\x)-1});
\draw[fill=black] (0,0)circle (1.5pt);
\draw[fill=black] (3,2.375) circle (1.5pt)node[right] {$(\alpha_{k},g(\alpha_{k}))$};
\draw[fill=black] (-2,-0.555555) circle (1.5pt) node[above left] {$(\gamma_{k},g(\gamma_{k}))$};
\draw[dashed, line width=0.5][domain=-2.6:3.6]plot(\x,{(0.475+0.11111)*(\x-3)+2.375});
\draw[dashed, line width=0.5][domain=-1:0.5]plot(-1.05213278,\x);

\draw[fill=black] (-1.05213278,-0.347277454) circle (1.5pt) node[below right] {$(\gamma_{k+1},g(\gamma_{k+1}))$};
\draw[dashed, line width=0.5][domain=1.1:4]plot(\x,{1.368444739*(\x-3)+2.375});
\draw[dashed, line width=0.5][domain=-3:2]plot(\x,{0.264656147*(\x+1.05213278)-0.347277454});
\draw[fill=black] (0.26,0) circle (1.5pt)node[below right] {$(\alpha_{k+1},0)$};;
\end{tikzpicture}
\caption{The accelerated interval Dinkelbach method  {\color{black} only modifies} the upper bound iterations, as shown in \eqref{eq:upper bound sequence next}. In the illustrated specific case, the zero-crossing of the tangent line to $g$ at $\gamma_{k+1}$ may provide a better approximation to $\alpha^{*}$ than that from the tangent at $\alpha_{k}$.}
\label{fig:algorithm 3}
\end{center}
\end{figure}
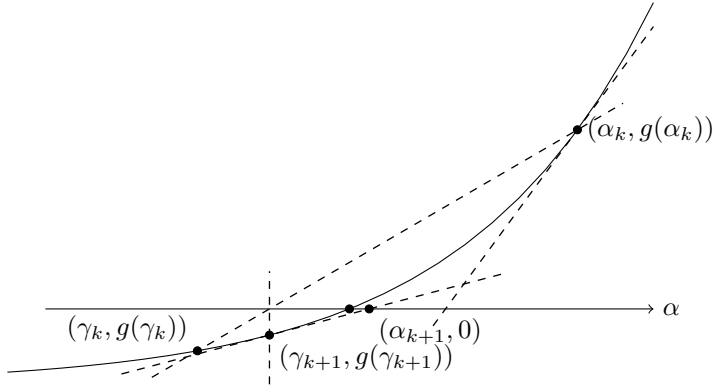  
The iteration~\eqref{eq:upper bound sequence next} can also be simplified as
\begin{equation}
\label{eq:10}
\alpha_{k+1}=\min\left\{ \frac{f_{1}(x_{k}^{\alpha})}{f_{2}(x_{k}^{\alpha})}, \frac{f_{1}(x_{k+1}^{\gamma})}{f_{2}(x_{k+1}^{\gamma})}\right\}.
\end{equation}

Critically, our approach introduces no additional subproblems
during iterations
beyond the original interval method. 
This is achieved by reusing the solution $x^{\gamma}_{k+1}$ already 
obtained when computing $g(\gamma_{k+1})$.
Surprisingly, this subtle adjustment leads to significant improvements, as demonstrated in subsequent key findings.

\begin{lemma}\label{lemma:inequ_1}
    The sequences generated by the accelerated interval Dinkelbach method satisfy the following properties.
    \begin{itemize}
        \item  $\{\alpha_k\}$ is monotonically decreasing and $g(\alpha_{k})\ge 0$ for $k=0,1,\cdots$.
\item $\{\gamma_k\}$ is monotonically increasing and $g(\gamma_{k})\le 0$  for $k=0,1,\cdots$.
    \end{itemize}
    \noindent Equivalently, for $k = 0, 1, \dots$, the inequality below holds:
    \begin{equation}\nonumber
        \gamma_k \leq \gamma_{k+1} \leq \alpha^* \leq \alpha_{k+1} \leq \alpha_k.
    \end{equation}
\end{lemma}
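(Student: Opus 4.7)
The plan is a straightforward induction on $k$, exploiting two structural facts about $g$ already recorded in the preliminaries: $g$ is convex with strictly increasing behavior and unique root $\alpha^*$, and for any $\alpha$ the value $f_2(x^\alpha) > 0$ lies in $\partial g(\alpha)$. Together these say that every tangent-type minorant of $g$ yields a zero to the right of $\alpha^*$, while every secant-type majorant yields a zero to the left of $\alpha^*$. The base case $\gamma_0 \le \alpha^* \le \alpha_0$ is immediate from the initial condition $g(\gamma_0) \le 0 \le g(\alpha_0)$ combined with strict monotonicity of $g$.

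For the inductive step, assume $\gamma_k \le \alpha^* \le \alpha_k$, equivalently $g(\gamma_k) \le 0 \le g(\alpha_k)$. For the lower bound, the Secant iterate $\gamma_{k+1}$ in \eqref{eq:lower bound sequence next} is the root of the chord joining $(\gamma_k, g(\gamma_k))$ and $(\alpha_k, g(\alpha_k))$. Convexity of $g$ places this chord above the graph of $g$ on $[\gamma_k,\alpha_k]$, so the chord's root cannot precede the root of $g$; hence $\gamma_{k+1} \le \alpha^*$ and therefore $g(\gamma_{k+1}) \le 0$. Monotonicity $\gamma_{k+1} \ge \gamma_k$ follows directly from the update formula, since $-g(\gamma_k) \ge 0$, $\alpha_k - \gamma_k > 0$, and $g(\alpha_k) - g(\gamma_k) > 0$ by Corollary~\ref{coro_supply}.

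For the upper bound, I would analyze the two candidates in \eqref{eq:upper bound sequence next} separately. Each has the form $\beta - g(\beta)/f_2(x^\beta)$ with $\beta \in \{\alpha_k, \gamma_{k+1}\}$ and $f_2(x^\beta) \in \partial g(\beta)$. The subgradient inequality gives $g(\alpha) \ge g(\beta) + f_2(x^\beta)(\alpha - \beta)$, so at the affine zero $\tilde\alpha := \beta - g(\beta)/f_2(x^\beta)$ we have $g(\tilde\alpha) \ge 0$, i.e.\ $\tilde\alpha \ge \alpha^*$. Taking the minimum preserves this lower bound, so $\alpha_{k+1} \ge \alpha^*$ and $g(\alpha_{k+1}) \ge 0$. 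The monotonicity $\alpha_{k+1} \le \alpha_k$ is obtained by noting that the first candidate $\alpha_k - g(\alpha_k)/f_2(x_k^{\alpha})$ is itself $\le \alpha_k$ (since $g(\alpha_k) \ge 0$ and $f_2 > 0$), and $\alpha_{k+1}$ is the minimum of this candidate and another quantity.

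I do not anticipate a substantial obstacle: every piece reduces to combining convexity, positivity of $f_2$, and the subgradient characterization of $\partial g$. The only mild subtlety is making sure both terms inside the $\min$ in \eqref{eq:upper bound sequence next} are treated uniformly, since the bound $\alpha_{k+1} \ge \alpha^*$ requires the tangent-type lower-bound argument to apply at $\gamma_{k+1}$ as well, not only at $\alpha_k$; this is legitimate because the subgradient inequality holds at every point and does not require $g(\gamma_{k+1}) \ge 0$.
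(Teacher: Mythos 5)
Your proof is correct and follows essentially the same route as the paper: the subgradient inequality applied at both $\alpha_k$ and $\gamma_{k+1}$ shows each candidate inside the $\min$ in \eqref{eq:upper bound sequence next} is $\ge \alpha^*$, and the nonnegativity of $g(\alpha_k)$ together with $f_2>0$ gives the monotone decrease. The only differences are cosmetic: you prove the lower-bound monotonicity directly via convexity (the chord lies above the graph, so its root must \emph{not exceed}---you write ``not precede'', a harmless slip---the root of $g$), whereas the paper simply cites Pardalos--Phillips for that part.
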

   
\begin{proof}
    We first prove $\alpha_{k}\ge \alpha^{*}$ for all $k$.
    For $k=0$,
    $\alpha_0\geq \alpha^*$.
    For $\forall k\ge 1$, since $f_{2}(x_{k}^{\gamma})\in \partial g(\gamma^{k})$, $f_{2}(x_{k-1}^{\alpha})\in \partial g(\alpha^{k-1})$, we have 
\begin{align}
    g(\gamma_{k})-g(\alpha^{*})&\le f_{2}(x_{k}^{\gamma})(\gamma_{k}-\alpha^{*}),\nonumber\\
 g(\alpha_{k-1})-g(\alpha^{*})&\le f_{2}(x_{k-1}^{\alpha})(\alpha_{k-1}-\alpha^{*}).\nonumber
\end{align}
As $g(\alpha^{*}) = 0$ and $f_2> 0$, it follows from the above inequalities that
\begin{align}
        \gamma_{k}-\frac{g(\gamma_{k})}{f_{2}(x_{k}^{\gamma})}&\ge \alpha^{*},\label{eq:gammak+1}\\
        \alpha_{k-1}-\frac{g(\alpha_{k-1})}{f_{2}(x_{k-1}^{\alpha})}&\ge \alpha^{*}.\label{eq:inequ 2}
\end{align}
Substituting
\eqref{eq:gammak+1} and \eqref{eq:inequ 2} into \eqref{eq:upper bound sequence next}, we confirm
\begin{equation}
\nonumber
\alpha_{k}= \min\left\{\alpha_{k-1}-\frac{g(\alpha_{k-1})}{f_{2}(x_{k-1}^{\alpha})},\gamma_{k}-\frac{g(\gamma_{k})}{f_{2}(x_{k}^{\gamma})}\right\}\ge \alpha^{*},
\end{equation}
Moreover, by the non-negativity of $g(\alpha_k)$ and $f_2(x_k^\alpha)$,  
it holds $\alpha_{k+1} \leq \alpha_k$ for all $k\ge 0$.
The monotonicity of the lower bound sequence has been established in \cite{Pardalos1991}, 
which completes the proof.
\end{proof}
{\color{black}
As an immediate consequence of Lemma~\ref{lemma:inequ_1} and the monotone convergence theorem, we obtain the convergence of both bound sequences.}
\begin{theorem}
\label{theorem: converge}
   $\{\gamma_{k}\}$ and $\{\alpha_{k}\}$ both converge to $\alpha^{*}$.
\end{theorem}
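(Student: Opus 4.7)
The plan is to leverage Lemma~\ref{lemma:inequ_1} directly: it already supplies monotonicity and two-sided boundedness, so the hard part is merely to pin the two limits down to $\alpha^*$. First, I would invoke the monotone convergence theorem to conclude that the limits $\gamma^{\infty}:=\lim_{k\to\infty}\gamma_k$ and $\alpha^{\infty}:=\lim_{k\to\infty}\alpha_k$ exist and satisfy $\gamma^{\infty}\le \alpha^*\le \alpha^{\infty}$. Throughout I would use the fact that $g$, being convex on $\mathbb{R}$ (see the properties recalled after \eqref{eq:parametric}), is continuous; and that $f_2$, being continuous and strictly positive on the compact set $\mathcal{F}$, is bounded below by some constant $c>0$.

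Next, for the lower bound I would pass to the limit in the Secant recursion \eqref{eq:lower bound sequence next}. Rewriting it as
\[
(\gamma_{k+1}-\gamma_k)\bigl(g(\alpha_k)-g(\gamma_k)\bigr)=-g(\gamma_k)(\alpha_k-\gamma_k),
\]
and using continuity of $g$, the limit yields $g(\gamma^{\infty})(\alpha^{\infty}-\gamma^{\infty})=0$. If $\gamma^{\infty}=\alpha^{\infty}$, then the sandwich $\gamma^{\infty}\le\alpha^*\le\alpha^{\infty}$ already forces $\gamma^{\infty}=\alpha^*$. Otherwise $g(\gamma^{\infty})=0$, and since $g$ is strictly increasing with unique root $\alpha^*$ (as stated in the first bullet after \eqref{eq:parametric}), we again obtain $\gamma^{\infty}=\alpha^*$. (One small point to verify is that the denominator $g(\alpha_k)-g(\gamma_k)$ stays positive along the iterations, which follows from Corollary~\ref{coro_supply} since $\alpha_k>\gamma_k$ whenever $\alpha^*$ has not yet been reached; the trivial case of finite termination is harmless.)

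Finally, for the upper bound I would exploit that \eqref{eq:upper bound sequence next} is a minimum, so in particular
\[
\alpha^*\;\le\;\alpha_{k+1}\;\le\;\gamma_{k+1}-\frac{g(\gamma_{k+1})}{f_2(x_{k+1}^{\gamma})}\;\le\;\gamma_{k+1}+\frac{|g(\gamma_{k+1})|}{c}.
\]
Having already established $\gamma_{k+1}\to \alpha^*$, continuity of $g$ gives $g(\gamma_{k+1})\to g(\alpha^*)=0$, so the right-hand side tends to $\alpha^*$; squeezing forces $\alpha^{\infty}=\alpha^*$.

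The main obstacle, which is rather mild, is that one cannot simply take limits in the upper-bound recursion because of the $\min$ operator; the trick is to discard the Dinkelbach branch and keep only the secant-derived branch, which is precisely where the information from $\{\gamma_k\}$ (already shown to converge) transfers to $\{\alpha_k\}$. The other technical point worth spelling out cleanly is the uniform lower bound $f_2\ge c>0$ on $\mathcal{F}$, needed to keep the quotient $g(\gamma_{k+1})/f_2(x_{k+1}^{\gamma})$ controlled; this is immediate from continuity of $f_2$, positivity, and compactness of $\mathcal{F}$.
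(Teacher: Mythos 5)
Your proof is correct and follows essentially the same route as the paper's: monotone convergence supplies the limits, and passing to the limit in the two recursions pins them both to $\alpha^*$. The one place you diverge is the upper-bound step: the paper keeps the \emph{Dinkelbach} branch of the $\min$ in \eqref{eq:upper bound sequence next}, bounding $\alpha \le \alpha - g(\alpha)/\max_{\xi\in\mathcal{F}}f_2(\xi)$ and deducing $g(\alpha)=0$ from $g(\alpha)\ge 0$ — an argument that does not rely on the convergence of $\{\gamma_k\}$ — whereas you keep the secant-derived branch and squeeze using the already-established $\gamma_{k+1}\to\alpha^*$. Both are valid; your version makes the upper bound a corollary of the lower bound, while the paper's keeps the two limits independent. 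Your cross-multiplied treatment of the secant recursion (yielding $g(\gamma^{\infty})(\alpha^{\infty}-\gamma^{\infty})=0$ and a two-case finish) is, if anything, slightly more careful about the denominator than the paper's direct passage to the limit of the divided difference.
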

\begin{proof}
Since the sequences $\{\alpha_k\}$ and $\{\gamma_k\}$ are both monotonic and bounded, by the Monotone Convergence Theorem, their limits $\lim_{k\rightarrow\infty}\gamma_k := \gamma$ and $\lim_{k\rightarrow\infty}\alpha_k := \alpha$ exist. Moreover,  the continuity of $g$ {\color{black}(directly derived from the convexity)} implies that $\lim_{k\rightarrow\infty}g(\gamma_k) = g(\gamma)$ and $\lim_{k\rightarrow\infty}g(\alpha_k) = g(\alpha)$.
Taking the limit as $k\rightarrow\infty$ in \eqref{eq:lower bound sequence next} and \eqref{eq:upper bound sequence next}, we derive
\begin{equation}
\label{eq:converge gamma}
\gamma = \gamma + g(\gamma)\frac{\alpha - \gamma}{g(\alpha) - g(\gamma)} = \gamma + \frac{g(\gamma)}{\delta g(\gamma,\alpha)},
\end{equation}
\begin{equation}
\label{eq:converge alpha}
\alpha =\lim_{k\rightarrow\infty}\min\left\{\alpha_{k-1}-\frac{g(\alpha_{k-1})}{f_{2}(x_{k-1}^{\alpha})},\gamma_{k}-\frac{g(\gamma_{k})}{f_{2}(x_{k}^{\gamma})}\right\}
\le \alpha-\frac{g(\alpha)}{\max_{\xi\in\mathcal{F}}f_{2}(\xi)}
\end{equation}
Since $\delta g(\gamma,\alpha)$ is bounded, it follows from \eqref{eq:converge gamma} that $g(\gamma) = 0$. {\color{black}Furthermore}, noting that $g(\alpha) = \lim_{k\rightarrow\infty}g(\alpha_k) \ge 0$ and $f_2> 0$, we conclude from \eqref{eq:converge alpha} that $g(\alpha) = 0$. Given that $g$ is strictly increasing with a unique zero at $\alpha^*$, this implies $\alpha = \gamma = \alpha^*$, thus completing the proof of convergence.
\end{proof}

Modification \eqref{eq:upper bound sequence next}
enhances the convergence rate of the lower bound sequence from superlinear to cubic under the same regularity conditions, as formalized below. 
{\color{black}
For the remainder of this section,} we assume that the sequences \(\{\alpha_k\}\) and \(\{\gamma_k\}\) are infinite. This infinitude ensures $\gamma_{k}<\alpha^{*}<\alpha_{k}$ for each $k$.
\begin{theorem}\label{theorem:enhanced interval  Dinkelbach}
{\color{black}If $g\in C^{2}[\gamma_{0},\alpha_{0}]$,} then the sequence $\{\gamma_{k}\}$ generated by \eqref{eq:lower bound sequence next} converges to $\alpha^{*}$ with at least cubic convergence.
\end{theorem}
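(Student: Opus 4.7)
\emph{Strategy.} The plan is to exploit the asymmetric acceleration that the modification \eqref{eq:upper bound sequence next} imposes on the upper bound sequence in order to boost the secant iteration \eqref{eq:lower bound sequence next} from its classical golden-ratio order $(1+\sqrt{5})/2$ up to order three. Writing $e_k := \gamma_k - \alpha^*$ and $d_k := \alpha_k - \alpha^*$, Lemma~\ref{lemma:inequ_1} gives $e_k \le 0 \le d_k$ and Theorem~\ref{theorem: converge} gives $e_k, d_k \to 0$. The argument has two steps: first, show that the $\min$ in \eqref{eq:upper bound sequence next} forces the auxiliary bound $d_k = O(e_k^{2})$; second, insert this bound into the standard secant error expansion to conclude $|e_{k+1}| = O(|e_k|^{3})$.

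\emph{Step 1: a quadratic bound on $d_k$.} Consider the Newton-from-$\gamma_{k+1}$ candidate
\[
\tilde{\alpha}_{k+1} := \gamma_{k+1} - \frac{g(\gamma_{k+1})}{f_2(x_{k+1}^{\gamma})}
\]
appearing inside the $\min$ in \eqref{eq:upper bound sequence next}. Since $g \in C^{2}[\gamma_0,\alpha_0]$ is in particular differentiable, the subgradient identification recalled in the Preliminaries gives $f_2(x_{k+1}^{\gamma}) = g'(\gamma_{k+1})$, so $\tilde{\alpha}_{k+1}$ is exactly the Newton iterate at $\gamma_{k+1}$. A second-order Taylor expansion of $g$ about $\gamma_{k+1}$, combined with $g(\alpha^*)=0$, yields
\[
\tilde{\alpha}_{k+1} - \alpha^* \;=\; \frac{g''(\xi_k)}{2\,g'(\gamma_{k+1})}\,(\gamma_{k+1}-\alpha^*)^{2} \;=\; O(e_{k+1}^{2}),
\]
using the compactness of $[\gamma_0,\alpha_0]$ and the strict positivity of $g'$. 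The $\min$ then enforces $\alpha_{k+1} \le \tilde{\alpha}_{k+1}$, while Lemma~\ref{lemma:inequ_1} provides $\alpha_{k+1} \ge \alpha^*$. Re-indexing delivers $d_k = O(e_k^{2})$ for every $k \ge 1$.

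\emph{Step 2: secant error analysis.} Expanding $g(\gamma_k)$ and $g(\alpha_k)$ to second order about $\alpha^*$ and substituting into the secant formula \eqref{eq:lower bound sequence next}, the numerator simplifies to $e_k g(\alpha_k) - d_k g(\gamma_k)$, in which the leading $g'(\alpha^*)\,e_k d_k$ terms cancel exactly. A direct manipulation then produces
\[
e_{k+1} \;=\; \frac{g''(\alpha^*)}{2\,g'(\alpha^*)}\, e_k d_k \;+\; \text{cubic remainders in }(e_k,d_k).
\]
Plugging $d_k = O(e_k^{2})$ from Step~1 into both the leading term and the remainders converts every summand into $O(|e_k|^{3})$, which is exactly the cubic bound claimed by the theorem.

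\emph{Main obstacle.} I expect the bulk of the work to be book-keeping: making the remainder terms and the various mean-value points arising from Taylor's theorem \emph{uniformly} controllable in $k$, rather than only asymptotically. This is handled by $C^{2}$-boundedness of $g''$, the uniform positive lower bound on $g'$ coming from $f_2>0$ together with $f_2(x)\in\partial g$, and the monotone envelope $\gamma_0 \le \gamma_k < \alpha^* < \alpha_k \le \alpha_0$ from Lemma~\ref{lemma:inequ_1}, which keeps every intermediate point inside the compact interval on which these constants are uniform. Nothing more delicate seems to be required.
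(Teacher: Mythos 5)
Your proposal is correct and follows essentially the same two-step decomposition as the paper's proof: the secant step gives $|\gamma_{k+1}-\alpha^*|=O\bigl(|\gamma_k-\alpha^*|\,(\alpha_k-\alpha^*)\bigr)$, while the $\min$ in the modified upper-bound update forces $\alpha_k-\alpha^*=O\bigl((\alpha^*-\gamma_k)^2\bigr)$, and combining the two yields the cubic rate. The paper phrases the first step via divided differences and subgradient bounds rather than your Newton-iterate identification, but the argument is the same in substance.
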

\begin{proof} 
 Firstly, for any given $k\ge 0$, we estimate $\gamma_{k+1}-\alpha^{*}$. Since {\color{black}$g\in C^{2}[\gamma_0,\alpha_{0}]$}, there exist $\xi_{k}\in[\gamma_{k},\alpha^{*}]$ and $\zeta_{k}\in[\alpha^{*},\alpha_{k}]$  such that
\begin{align}
g(\gamma_{k})&=g(\alpha^{*})+(\gamma_{k}-\alpha^{*})g'(\alpha^{*})+\frac{1}{2}(\gamma_{k}-\alpha^{*})^{2}g''(\xi_{k}),\label{eq: Taylor expand at gammak}\\
g(\alpha_{k})&=g(\alpha^{*})+(\alpha_{k}-\alpha^{*})g'(\alpha^{*})+\frac{1}{2}(\alpha_{k}-\alpha^{*})^{2}g''(\zeta_{k}).\label{eq: Taylor expand at alphak}
\end{align}
Since $g(\alpha^{*}) = 0$, we compute the divided differences $\delta g(\gamma_{k},\alpha^{*})$ and $\delta g(\gamma_{k},\alpha_{k})$ through  \eqref{eq: Taylor expand at gammak} and \eqref{eq: Taylor expand at alphak}:
\begin{align}
\delta g(\gamma_{k},\alpha^{*}) &= \frac{g(\gamma_{k}) - g(\alpha^{*})}{\gamma_{k} - \alpha^{*}} = g'(\alpha^{*}) + \frac{1}{2}(\gamma_{k} - \alpha^{*})g''(\xi_{k}),\nonumber\\
\delta g(\gamma_{k},\alpha_{k}) &= \frac{g(\gamma_{k}) - g(\alpha_{k})}{\gamma_{k} - \alpha_{k}} = g'(\alpha^{*}) + \frac{1}{2}\frac{(\gamma_{k} - \alpha^{*})^{2}}{\gamma_{k} - \alpha_{k}}g''(\xi_{k}) - \frac{1}{2}\frac{(\alpha_{k} - \alpha^{*})^{2}}{\gamma_{k} - \alpha_{k}}g''(\zeta_{k}).\nonumber
\end{align}
Subtracting these two expressions yields
\begin{align}
\delta g(\gamma_k,\alpha_{k}) - \delta g(\gamma_k,\alpha^{*}) &= \frac{1}{2}\frac{(\alpha_{k} - \alpha^{*})(\gamma_{k} - \alpha^{*})}{\gamma_{k} - \alpha_{k}}g''(\xi_{k}) - \frac{1}{2}\frac{(\alpha_{k} - \alpha^{*})^{2}}{\gamma_{k} - \alpha_{k}}g''(\zeta_{k}) \nonumber \\
&= \frac{\alpha_{k} - \alpha^{*}}{2}\left(\frac{\alpha^{*} - \gamma_{k}}{\alpha_{k} - \gamma_{k}}g''(\xi_{k}) + \frac{\alpha_{k} - \alpha^{*}}{\alpha_{k} - \gamma_{k}}g''(\zeta_{k})\right).\nonumber
\end{align}
{\color{black}Since
 $\gamma_k < \alpha^* < \alpha_k$, we have}
\[
\frac{\alpha^* - \gamma_k}{\alpha_k - \gamma_k} > 0 \quad \text{and} \quad \frac{\alpha_k - \alpha^*}{\alpha_k - \gamma_k} > 0.
\]
Therefore, we establish the following bound:
\begin{align}
|\delta g(\gamma_k,\alpha_{k}) - \delta g(\gamma_k,\alpha^{*})| &= \frac{(\alpha_{k} - \alpha^{*})}{2}\left|\frac{\alpha^{*} - \gamma_{k}}{\alpha_{k} - \gamma_{k}}g''(\xi_{k}) + \frac{\alpha_{k} - \alpha^{*}}{\alpha_{k} - \gamma_{k}}g''(\zeta_{k})\right| \nonumber \\
&\leq \frac{(\alpha_{k} - \alpha^{*})}{2}\left(\frac{\alpha^{*} - \gamma_{k}}{\alpha_{k} - \gamma_{k}}|g''(\xi_{k})| + \frac{\alpha_{k} - \alpha^{*}}{\alpha_{k} - \gamma_{k}}|g''(\zeta_{k})|\right) \nonumber \\
&\leq \frac{(\alpha_{k} - \alpha^{*})}{2}\max_{\zeta\in[\gamma_{k},\alpha_{k}]}|g''(\zeta)|\left(\frac{\alpha^{*} - \gamma_{k}}{\alpha_{k} - \gamma_{k}} + \frac{\alpha_{k} - \alpha^{*}}{\alpha_{k} - \gamma_{k}}\right) \nonumber \\
&= \frac{\sigma_{k}}{2}(\alpha_{k} - \alpha^{*}),\label{eq:taylor expand}
\end{align}
where 
\begin{equation}
\label{eq:sigma}
\sigma_{k}=\max_{\zeta\in[\gamma_{k},\alpha_{k}]}|g''(\zeta)|
\end{equation}
exists since 
$g''\in C[\gamma_{0},\alpha_{0}]$.
Given $\alpha_{k}>\alpha^{*}>\gamma_{k}\ge \gamma_{0}$, {\color{black}Lemma \ref{lemma:subgradient_property} implies
\begin{equation}
    \label{eq:456}
\delta g(\gamma_{k},\alpha_{k})\ge f_{2}(x_{k}^{\gamma})\ge f_{2}(x_{0}^{\gamma}),
\ \text{and}\ f_{2}(x^{*})\ge \delta g(\gamma_{k},\alpha^{*}).
\end{equation}}
Given that $g(\alpha^{*})=0$, we have an upper estimate on $|\gamma_{k+1}-\alpha^{*}|$:
\begin{align}
|\gamma_{k+1}-\alpha^{*}|&=\left|\gamma_{k}-\alpha^{*}-(g(\gamma_{k})-g(\alpha^{*}))\frac{\gamma_{k}-\alpha_{k}}{g(\gamma_{k})-g(\alpha_{k})}\right|\nonumber\\
&=(\alpha^{*}-\gamma_{k})\left|1-\frac{\delta g(\gamma_{k},\alpha^{*})}{\delta g(\gamma_{k},\alpha_{k})}\right|\nonumber\\
&=(\alpha^{*}-\gamma_{k})\frac{|\delta g(\gamma_{k},\alpha_{k})-\delta g(\gamma_{k},\alpha^{*})|}{\delta g(\gamma_{k},\alpha_{k})}.\nonumber\\
    &\le (\alpha^{*}-\gamma_{k})(\alpha_k - \alpha^*)\frac{\sigma_{k}}{2\delta g(\gamma_{k}.\alpha_{k})}  \nonumber \ (\text{by \eqref{eq:taylor expand}})\\
    &\le \frac{\sigma_{k}}{2f_{2}(x_{0}^{\gamma})} |\gamma_k - \alpha^*|(\alpha_k - \alpha^*)\ (\text{by \eqref{eq:456}})\label{eq:final}.
\end{align}

Secondly, we 
establish an upper bound for $\alpha_k - \alpha^*$.
{\color{black}
By the construction of $\alpha_{k}$ \eqref{eq:upper bound sequence next}, we have 
\begin{align}
\alpha_{k}-\alpha^{*}&\le \gamma_{k}-\frac{g(\gamma_{k})}{f_{2}(x_{k}^{\gamma})}-\alpha^{*}\nonumber\\
&=(\alpha^{*}-\gamma_{k})\left(\frac{\delta g(\gamma_{k},\alpha^{*})-f_{2}(x_{k}^{\gamma})}{f_{2}(x_{k}^{\gamma})}\right)\nonumber\\
&\le(\alpha^* - \gamma_k) \frac{f_2(x^*) - f_2(x_k^\gamma)}{f_2(x_0^\gamma)}\ (\text{by \eqref{eq:456}})\nonumber\\
&\le   \frac{\sigma_{k}}{f_2(x_0^\gamma)} (\alpha^* - \gamma_k)^2,\label{eq:establish}
\end{align}
where \eqref{eq:establish} holds since 
\[
    f_2(x^*) - f_2(x_k^\gamma) = g'(\alpha^*) - g'(\gamma_k) = g''(\omega_k)(\alpha^* - \gamma_k),
\]
for some $\omega_k \in [\gamma_k, \alpha^*]$, and 
\[
g''(\omega_{k})\le \sigma_{k}\]
by the definition of $\sigma_{k}$ in \eqref{eq:sigma}.

Finally, substituting 
the upper bound of $\alpha_{k}-\alpha^{*}$ \eqref{eq:establish} into the upper bound of $|\gamma_{k+1}-\alpha^{*}|$ 
\eqref{eq:final}  yields the cubic convergence}
\[
|\gamma_{k+1}-\alpha^{*}|\le \frac{\sigma_{k}^{2}}{2f_{2}^{2}(x_{0}^{\gamma})}|\gamma_{k}-\alpha^{*}|^{3}\le \frac{\sigma_{0}^{2}}{2f_{2}^{2}(x_{0}^{\gamma})}|\gamma_{k}-\alpha^{*}|^{3},
\]
where $\sigma_{k}= \max_{\zeta \in [\gamma_k, \alpha_k]} |g''(\zeta)|\le  \max_{\zeta \in [\gamma_0, \alpha_0]} |g''(\zeta)|=\sigma_{0}$.
This completes the proof.
\end{proof}
{\color{black}
We now demonstrate the superquadratic convergence of the upper bound sequence $\{\alpha_{k}\}$.
\begin{theorem}
    \label{theorem:enhanced interval Dinkelbach 1}
If $g\in C^{2}[\gamma_{0},\alpha_{0}]$, 
then the upper bound sequence \(\{\alpha_k\}\) converges with superquadratic order.
\end{theorem}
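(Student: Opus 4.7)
The plan is to exploit the minimum structure in the upper-bound iterate \eqref{eq:upper bound sequence next} together with the cubic convergence of $\{\gamma_{k}\}$ just established in Theorem~\ref{theorem:enhanced interval  Dinkelbach}. Since $\alpha_{k+1}$ is defined as a minimum of two candidates, it suffices to bound it from above by the \emph{second} candidate, $\gamma_{k+1} - g(\gamma_{k+1})/f_{2}(x_{k+1}^{\gamma})$. Using only the first candidate would yield merely the quadratic estimate that the excerpt's commented-out lemma records; feeding in the already proven fast decay of $|\gamma_{k+1}-\alpha^{*}|$ should push the rate strictly past quadratic.

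First I would re-run, at the point $\gamma_{k+1}$ rather than $\gamma_{k}$, the calculation that produced inequality \eqref{eq:establish}. The three ingredients are: (i) the identity $g(\gamma_{k+1}) = \delta g(\gamma_{k+1},\alpha^{*})(\gamma_{k+1}-\alpha^{*})$ since $g(\alpha^{*})=0$; (ii) the subgradient sandwich $f_{2}(x_{k+1}^{\gamma}) \le \delta g(\gamma_{k+1},\alpha^{*}) \le f_{2}(x^{*})$ from Lemma~\ref{lemma:subgradient_property}; and (iii) the mean-value identity $f_{2}(x^{*})-f_{2}(x_{k+1}^{\gamma}) = g'(\alpha^{*})-g'(\gamma_{k+1}) = g''(\omega)(\alpha^{*}-\gamma_{k+1})$ for some $\omega\in[\gamma_{k+1},\alpha^{*}]$, dominated by $\sigma_{0}$. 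Combining these exactly as in the proof of Theorem~\ref{theorem:enhanced interval  Dinkelbach} yields
\[
0 \le \alpha_{k+1} - \alpha^{*} \le \gamma_{k+1} - \frac{g(\gamma_{k+1})}{f_{2}(x_{k+1}^{\gamma})} - \alpha^{*} \le \frac{\sigma_{0}}{f_{2}(x_{0}^{\gamma})}(\alpha^{*} - \gamma_{k+1})^{2}.
\]

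Next I would substitute the bound \eqref{eq:final}, that is $|\gamma_{k+1}-\alpha^{*}| \le \frac{\sigma_{0}}{2f_{2}(x_{0}^{\gamma})}|\gamma_{k}-\alpha^{*}|(\alpha_{k}-\alpha^{*})$, to reach
\[
\alpha_{k+1} - \alpha^{*} \le \frac{\sigma_{0}^{3}}{4\,f_{2}(x_{0}^{\gamma})^{3}}(\alpha^{*}-\gamma_{k})^{2}(\alpha_{k}-\alpha^{*})^{2}.
\]
Dividing by $(\alpha_{k}-\alpha^{*})^{2}$ and invoking $\gamma_{k}\to\alpha^{*}$ from Theorem~\ref{theorem: converge} gives $(\alpha^{*}-\gamma_{k})^{2}\to 0$, whence
\[
\limsup_{k\to\infty}\frac{\alpha_{k+1}-\alpha^{*}}{(\alpha_{k}-\alpha^{*})^{2}} = 0,
\]
which is exactly the paper's definition of superquadratic convergence (asymptotic order $2$ with constant $C=0$).

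I do not anticipate a substantive obstacle, since all technical machinery has already been developed for Theorem~\ref{theorem:enhanced interval  Dinkelbach}; the novel content is only the observation that the cubic rate of $\{\gamma_{k}\}$ can be recycled through the \emph{second} argument of the minimum in \eqref{eq:upper bound sequence next}. The sole minor verification is that the constants $\sigma_{0}$ and $f_{2}(x_{0}^{\gamma})$ remain valid at the shifted index $k+1$, which is immediate from the monotonicity $\gamma_{k+1}\in[\gamma_{0},\alpha_{0}]$ guaranteed by Lemma~\ref{lemma:inequ_1}.
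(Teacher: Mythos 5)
Your proposal is correct and follows essentially the same route as the paper: the paper likewise bounds $\alpha_{k+1}-\alpha^{*}$ by applying \eqref{eq:establish} at iteration $k+1$ (which already encodes the second argument of the minimum in \eqref{eq:upper bound sequence next}), then substitutes \eqref{eq:final} to obtain $\alpha_{k+1}-\alpha^{*}\le \frac{\sigma_{k+1}\sigma_{k}^{2}}{4f_{2}^{3}(x_{0}^{\gamma})}|\gamma_{k}-\alpha^{*}|^{2}(\alpha_{k}-\alpha^{*})^{2}$ and lets $\gamma_{k}\to\alpha^{*}$ kill the quotient. The only cosmetic difference is that you uniformly replace $\sigma_{k},\sigma_{k+1}$ by $\sigma_{0}$ from the outset, which is harmless.
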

\begin{proof}
For all $k\ge 0$, we obtain
\[
\alpha_{k+1}-\alpha^{*}\le \frac{\sigma_{k+1}}{f_{2}(x_{0}^{\gamma})}(\alpha^{*}-\gamma_{k+1})^{2}\le  \frac{\sigma_{k+1}}{f_{2}(x_{0}^{\gamma})}\frac{\sigma_{k}^{2}}{4f_{2}^{2}(x_{0}^{\gamma})} |\gamma_{k} - \alpha^*|^{2}(\alpha_{k} - \alpha^*)^{2},\]
where the first inequality follows from applying \eqref{eq:establish} to iteration $k+1$ and the second from \eqref{eq:final}. Since \(\gamma_k \to \alpha^*\) as \(k \to \infty\), and the coefficient
\(
\sigma_{k+1}\sigma_k^2/4f_2^3(x_0^\gamma)
\)
is bounded above by \(\sigma_0^3/(4f_2^3(x_0^\gamma))\) (where \(\sigma_0 = \max_{\zeta \in [\gamma_0, \alpha_0]} |g''(\zeta)|\)), we conclude that
\[
\lim_{k \to \infty} \frac{\alpha_{k+1} - \alpha^*}{(\alpha_k - \alpha^*)^2} = 0.
\]
This completes the proof.
\end{proof}}
{\color{black}
\begin{remark}
    Under the same smoothness assumptions, the upper bound sequence attains a higher asymptotic convergence order. Specifically, the order is 3 if \( g''(\alpha^*) \neq 0 \), and is at least \( 3 - \epsilon \) for any \( \epsilon > 0 \) if \( g''(\alpha^*) = 0 \). A detailed proof establishing these rates is possible but involves lengthy technical arguments,  it is therefore omitted for brevity.
\end{remark}}
We conclude this section with a numerical example that validates the convergence orders established in Theorems~\ref{theorem:enhanced interval Dinkelbach} and \ref{theorem:enhanced interval Dinkelbach 1}.
\begin{table}[h]
\centering
\caption{Comparison of convergence behavior between
accelerated and original interval Dinkelbach methods}
\label{tab:comparison}
\begin{tabular}{@{}c rr rr@{}}
\toprule
 & \multicolumn{2}{c}{Accelerated} & \multicolumn{2}{c}{Original} \\
\cmidrule(lr){2-3} \cmidrule(lr){4-5}
\(k\) & \(g(\gamma_{k})\) & \(g(\alpha_{k})\) & \(g(\gamma_{k})\) & \(g(\alpha_{k})\) \\
\midrule
0 & \(-1.86 \times 10^{1}\) & \(1.89 \times 10^{2}\) & \(-1.86 \times 10^{1}\) & \(1.89 \times 10^{2}\) \\
1 & \(-1.30 \times 10^{1}\) & \(7.39 \times 10^{-1}\) & \(-1.30 \times 10^{1}\) & \(7.39 \times 10^{1}\) \\
2 & \(-5.50 \times 10^{-2}\) & \(2.10 \times 10^{-5}\) & \(-6.06 \times 10^{0}\) & \(2.72 \times 10^{1}\) \\
3 & \(-8.05 \times 10^{-9}\) & \(4.52 \times 10^{-19}\) & \(-1.29 \times 10^{0}\) & \(6.32 \times 10^{0}\) \\
4 & \(-2.54 \times 10^{-29}\) & \(4.48\times 10^{-60}\)               & \(-6.06 \times 10^{-2}\) & \(3.21 \times 10^{-1}\) \\
5 &\(-7.92 \times 10^{-91}\) & \(4.38\times 10^{-183}\)   & \(-1.36 \times 10^{-4}\) & \(7.26 \times 10^{-4}\) \\
6 &        \multicolumn{2}{c}{—}            & \(-6.89 \times 10^{-10}\) & \(3.68 \times 10^{-9}\) \\
7 &  \multicolumn{2}{c}{—}                    & \(-1.77 \times 10^{-20}\) & \(9.42 \times 10^{-20}\) \\
8 & \multicolumn{2}{c}{—}                     & \(-1.16\times 10^{-41}\)                    & \(6.19 \times 10^{-41}\) \\
9 & \multicolumn{2}{c}{—}                     & \(-5.01\times 10^{-84}\)                    & \(2.67 \times 10^{-83}\) \\
10 & \multicolumn{2}{c}{—}                     & \(-9.31\times 10^{-169}\)                    & \(4.97 \times 10^{-168}\) \\
\bottomrule
\end{tabular}
\end{table}

\begin{example}
Table~\ref{tab:comparison} provides a comparison between the accelerated interval Dinkelbach method and the original version for the following function. \noindent
For the sake of simplification, we directly define $g$ as 
\[
g(\alpha) = e^{\frac{1}{2}\alpha} + 5\alpha - 9
\]
\noindent with initial values $\alpha_0 = 10$ and $\gamma_0 = -2$. The computations are performed in Mathematica 12.3.1.0 with 200 digit precision. The convergence criterion is triggered when ${\color{black}\min}\{|g(\alpha_k)|, |g(\gamma_k)|\} < 10^{-160}$.

{\color{black}
The convergence orders can be analyzed through the error bounds derived from Lemma~\ref{lemma:subgradient_property}. Since $g(\alpha^{*})=0$, we have
\begin{align*}
f_2(x^{*})(\alpha_k - \alpha^{*}) &\leq g(\alpha_k) \leq f_2(x_0^{\alpha})(\alpha_k - \alpha^{*}), \\
f_2(x_0^{\gamma})(\alpha^{*} - \gamma_k) &\leq -g(\gamma_k) \leq f_2(x^{*})(\alpha^{*} - \gamma_k).
\end{align*}
These inequalities imply that 
$g(\alpha_{k})=O(\alpha_{k}-\alpha^{*})$ and $g(\gamma_{k})=O(\gamma_{k}-\alpha^{*})$, i.e., the convergence rates of $\{\alpha_{k}\}$ and $\{\gamma_{k}\}$ are equivalent to those of $\{g(\alpha_{k})\}$ and $g(\gamma_{k})$, respectively.

The numerical result demonstrates the superquadratic convergence for $\{\alpha_{k}\}$ and cubic convergence $\{\gamma_{k}\}$.}
\end{example}
\section{Accelerated Dinkelbach method}\label{section:Accelerated dinkelbach method}
This section introduces an accelerated Dinkelbach method. We leverage two prior iterates to enhance step expansion and systematically incorporate non-monotonic iterates generated during the convergence process.
\subsection{Motivation}
Motivated by Dinkelbach's application of the generalized Newton method to solve \eqref{eq:parametric}, we take an accelerated Newton-type method as the foundation for our approach.
Consider a sufficiently differentiable, increasing, and convex function $f: \R \to \R$. 
We aim to compute a simple zero of $f$ i.e., a solution 
$x^{*}$ satisfying
 $f(x^{*}) = 0
$ and $f'(x^{*}) \neq 0$.

Fernández-Torres in \cite{FernandezTorres2015}
introduces an accelerated Newton method that attains a local convergence rate of $1 + \sqrt{2}$.
For two previous iterates $x_{k-1}>x_{k}>x^{*}$, Lemma~\ref{lemma:subgradient_property} implies 
\begin{equation}\nonumber
f'(x_k) \leq \delta f(x_k, x_{k-1}).
\end{equation}
\noindent The next iteration $x_{k+1}$ is defined as the root of the secant line passing through $(x_k, f(x_k))$ and $(x_{k-1}, f(x_{k-1})f'(x_{k})/\delta f(x_{k},x_{k-1}))$. Concretely, it takes the form
\begin{equation}\label{eq:Motivation}
    \fbox{$
    x_{k+1}=x_{k}-\frac{f(x_{k})(f(x_{k})-f(x_{k-1}))(x_{k}-x_{k-1})}{f(x_{k})(f(x_{k})-f(x_{k-1}))-f(x_{k-1})f'(x_{k})(x_{k}-x_{k-1})}.$}
\end{equation} 
Figure~\ref{fig:main_result} provides a geometric illustration of this accelerated Newton iteration.
 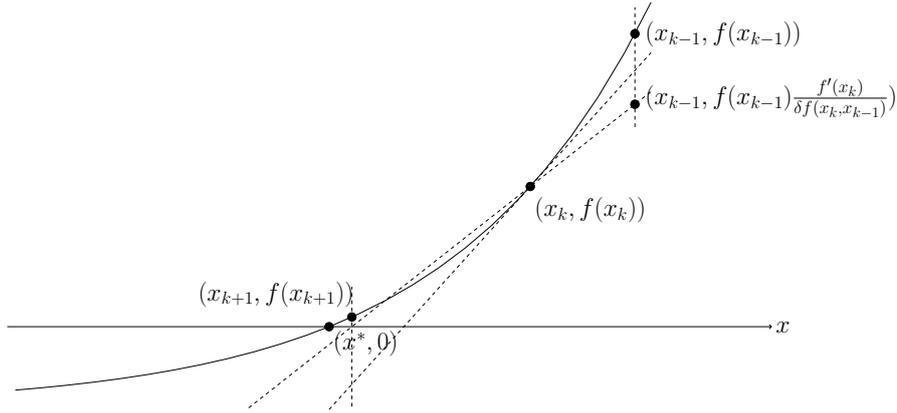
\begin{figure}[ht]
    \centering
    \begin{adjustbox}{width=0.9\textwidth}
    \begin{tikzpicture}[scale=2.3, font=\huge] 
        \draw[->] (-4,0) -- (5.5,0) node[anchor=west]  {$x$}; 
        \draw[domain=-3.9:4] plot (\x,{1.5^(\x)-1});
        \draw[fill=black] (0,0) circle (1.5pt) node[ below right] {$(x^{*},0)$}; 
        \draw[fill=black] (2.5,1.75567596) circle (1.5pt) node[below right, yshift=-5pt] {$(x_{k},f(x_{k}))$};
        \draw[fill=black] (3.8,3.66817130) circle (1.5pt) node[right, xshift=5pt] {$(x_{k-1},f(x_{k-1}))$}; 
         \draw[dashed, line width=0.5] plot[domain=2.5:4] (3.8,\x);
        \draw[fill=black] (3.8,2.785277) circle (1.5pt) node[right, xshift=5pt, yshift=5pt] {$(x_{k-1},f(x_{k-1})\frac{f'(x_{k})}{\delta f (x_{k},x_{k-1})})$};
        \draw[dashed, line width=0.5] plot[domain=0:4] (\x,{1.11733045099*(\x-2.5)+1.75567596});
        \draw[dashed, line width=0.5] plot[domain=-1:4] (\x,{0.7920008*(\x-2.5)+1.75567596});
        \draw[dashed, line width=0.5] plot[domain=-1:0.5] (0.283239739,\x);
        \draw[fill=black] (0.283239739,0.121698249841035) circle (1.5pt) node[above left, xshift=5pt, yshift=5pt] {$(x_{k+1},f(x_{k+1}))$}; 
    \end{tikzpicture}
    \end{adjustbox}
    \caption{
The accelerated method provides more accurate estimate of $x 
_{k+1}$ than the standard Newton method near the optimal solution $x^{*}$.}
\label{fig:main_result} 
\end{figure}
 \begin{theorem}[\cite{FernandezTorres2015}]
   \label{lemma:1+sqrt2}
   Let $f:\mathbb{R} \to \mathbb{R} $ be a sufficiently differentiable function with a root $r$ and $f'(x)\neq 0$ for all $x$. If $x_{-1}$ is sufficiently close to $r$, $x_{0}=x_{-1}-f(x_{-1})/f'(x_{-1})$, then the sequence generated by the iteration~\eqref{eq:Motivation}  converges to $x^*$ with an order of $1 + \sqrt{2}$. 
 \end{theorem}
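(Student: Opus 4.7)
The plan is to derive an asymptotic error recurrence of the form $e_{k+1}\sim K\,e_k^2\,e_{k-1}$, where $e_k=x_k-x^\ast$, and then solve the associated characteristic equation $p=2+1/p$ to obtain the convergence order $1+\sqrt{2}$. Throughout, I will use the normalized coefficients $c_j:=f^{(j)}(x^\ast)/(j!\,f'(x^\ast))$, which is legitimate since $f'(x^\ast)\neq 0$ by assumption.

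\textbf{Step 1: Algebraic reformulation.} First, I will simplify the recursion~\eqref{eq:Motivation} by dividing numerator and denominator by $(x_k-x_{k-1})$, writing it in terms of the divided difference $d_k:=\delta f(x_k,x_{k-1})=(f(x_k)-f(x_{k-1}))/(x_k-x_{k-1})$, and then splitting the iterate as a Newton step plus a correction. A short manipulation using the identity $f(x_{k-1})=f(x_k)-d_k(x_k-x_{k-1})$ yields
\[
e_{k+1}=\Bigl(e_k-\tfrac{f(x_k)}{f'(x_k)}\Bigr)+\frac{f(x_k)^2\,\bigl[d_k-f'(x_k)\bigr]}{f'(x_k)\,\bigl[f(x_k)\,d_k-f(x_{k-1})\,f'(x_k)\bigr]}.
\]
This decomposition is what makes the required cancellation visible.

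\textbf{Step 2: Taylor expansions.} I will then insert the standard expansions
\[
f(x_k)=f'(x^\ast)\bigl(e_k+c_2e_k^2+c_3e_k^3+O(e_k^4)\bigr),\qquad f'(x_k)=f'(x^\ast)\bigl(1+2c_2e_k+3c_3e_k^2+O(e_k^3)\bigr),
\]
and the analogous expansion of $f(x_{k-1})$, into the formulas for $d_k$, $d_k-f'(x_k)$, and $f(x_k)\,d_k-f(x_{k-1})\,f'(x_k)$. The two useful factorizations are
\[
d_k-f'(x_k)=f'(x^\ast)(e_{k-1}-e_k)\bigl[c_2+O(\max(e_k,e_{k-1}))\bigr],
\]
and
\[
f(x_k)\,d_k-f(x_{k-1})\,f'(x_k)=f'(x^\ast)^2(e_k-e_{k-1})\bigl[1+c_2(2e_k+e_{k-1})+O(\cdot)\bigr],
\]
so that the common factor $(e_k-e_{k-1})$ cancels cleanly between numerator and denominator of the correction term.

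\textbf{Step 3: Cancellation and leading order.} The standard Newton analysis gives
\[
e_k-\frac{f(x_k)}{f'(x_k)}=c_2e_k^2+2(c_3-c_2^2)e_k^3+O(e_k^4),
\]
while the correction term expands to
\[
-c_2e_k^2+(c_2^2-c_3)(e_{k-1}+2e_k)\,e_k^2+O(e_k^2\cdot e^2).
\]
Adding them, the $\pm c_2e_k^2$ contributions cancel exactly and the $e_k^3$ contributions combine; the surviving leading-order term is
\[
e_{k+1}=(c_2^2-c_3)\,e_k^2\,e_{k-1}+O(e_k^2\cdot e^2),
\]
so the recursion has two-point order structure $(2,1)$ in $(e_k,e_{k-1})$. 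The delicate step, and the main obstacle, is performing this cancellation correctly: both the Newton term and the correction behave like $c_2e_k^2$ to first approximation, so obtaining the actual leading error requires carrying both expansions one order further and keeping careful track of $e_{k-1}$ contributions coming only through the correction.

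\textbf{Step 4: Convergence order and global setup.} Assuming $e_{k+1}\sim C\,e_k^p$ with $p>1$ and substituting into $e_{k+1}=K\,e_k^2\,e_{k-1}$ gives $p=2+1/p$, whose positive root is $p=1+\sqrt{2}$. To close the argument I will (a) choose $x_{-1}$ close enough to $x^\ast$ that $|e_{-1}|\leq\delta$ for a $\delta$ controlling the neglected $O(\cdot)$ terms, (b) observe that $x_0$ obtained by one Newton step satisfies $|e_0|\leq c_2\,e_{-1}^2+O(e_{-1}^3)$, hence $|e_0|\ll|e_{-1}|$, and (c) run an induction showing that $|e_{k+1}|\leq M\,|e_k|^2\,|e_{k-1}|$ for a fixed constant $M$, which both ensures convergence and yields the claimed asymptotic order $1+\sqrt{2}$.
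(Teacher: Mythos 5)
The paper does not actually prove this statement; it is imported verbatim from the cited reference \cite{FernandezTorres2015} and used as a black box. Your proposal is a correct and standard proof of it: the decomposition of \eqref{eq:Motivation} into a Newton step plus a correction with numerator $f(x_k)^2\bigl(d_k-f'(x_k)\bigr)$ checks out algebraically, the factorizations $d_k-f'(x_k)=f'(x^\ast)(e_{k-1}-e_k)\bigl[c_2+O(\cdot)\bigr]$ and $f(x_k)d_k-f(x_{k-1})f'(x_k)=f'(x^\ast)^2(e_k-e_{k-1})\bigl[1+O(\cdot)\bigr]$ are right, and the cancellation of both the $c_2e_k^2$ and the $e_k^3$ terms does leave $e_{k+1}=(c_2^2-c_3)\,e_k^2e_{k-1}+\text{h.o.t.}$ Reassuringly, your leading constant $c_2^2-c_3=\bigl(3f''(x^\ast)^2-2f'(x^\ast)f'''(x^\ast)\bigr)/\bigl(12f'(x^\ast)^2\bigr)$ is exactly the limit $\chi/(12g'^2(\alpha^\ast))$ of the quantity $P_k$ that the paper itself derives in Lemma~\ref{lemma:1} for the analogous iteration \eqref{eq:alpha sequence}, so your route coincides with the expansion the authors use downstream. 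Two minor points to tighten in a full write-up: (i) the induction in Step 4(c) should also verify that the denominator $f(x_k)d_k-f(x_{k-1})f'(x_k)$ stays bounded away from zero (it is $\sim f'(x^\ast)^2(e_k-e_{k-1})$, so you need $x_k\neq x_{k-1}$ and the iterates to remain in the neighborhood), and (ii) the recurrence only gives order \emph{at least} $1+\sqrt{2}$; the order is exactly $1+\sqrt{2}$ when $c_2^2\neq c_3$, i.e.\ $\chi\neq 0$, which is the same genericity caveat the paper handles separately in Theorem~\ref{Theorem_final}.
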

\begin{remark}
The accelerated Newton method \eqref{eq:Motivation} demonstrates local convergence properties while lacking global convergence guarantees.
\end{remark}
\subsection{Accelerated Dinkelbach method}
\begin{algorithm}
\caption{Accelerated Dinkelbach algorithm}
\label{Algorithm: Accelerated Dinkelbach}
\begin{algorithmic}[1]
\Require $x_{-1}\in\F$.
\Ensure $\min_{x\in\F} f_{1}(x)/f_{2}(x)$ 
\State{$\textbf{Compute}\,\alpha_{-1}=f_{1}(x_{-1})/f_{2}(x_{-1})$ \textbf{and}
$\alpha_{0}=\alpha_{-1}-g(\alpha_{-1})/f_{2}(x_{-1})$. \textbf{Set} $\rho>1$, {\color{black}$\epsilon\ge 0$}}, $k=0$.
\While{$\lvert g(\alpha_{k}) \rvert> \epsilon$}
\If{$g(\alpha_{k-1})f_{2}(x_{k})(\alpha_{k}-\alpha_{k-1})\le \rho g(\alpha_{k})(g(\alpha_{k})-g(\alpha_{k-1}))$}
\begin{equation}
\label{eq:alpha sequence}
\,\quad\,\alpha_{k+1} \leftarrow
    \alpha_{k}-\frac{g(\alpha_{k})(g(\alpha_{k})-g(\alpha_{k-1}))(\alpha_{k}-\alpha_{k-1})}{g(\alpha_{k})(g(\alpha_{k})-g(\alpha_{k-1}))-g(\alpha_{k-1})f_{2}(x_k)(\alpha_{k}-\alpha_{k-1})}\end{equation}
\If{$g(\alpha_{k+1})<0$}
\begin{align}\label{eq:min tangent}&\alpha_{k+2}\leftarrow \min\left\{\alpha_{k+1}-\frac{g(\alpha_{k+1})}{f_{2}(x_{k+1})},\alpha_{k}-\frac{g(\alpha_{k})}{f_{2}(x_{k})}\right\}\\
        &k\leftarrow k+2\nonumber
    \end{align}
    \Else
        \State{\qquad \quad \,$k\leftarrow k+1$}
    \EndIf
    \Else
\begin{flalign}\label{eq:original Dinkelbach}&\qquad\quad \,\alpha_{k+1}\leftarrow \alpha_{k}-\frac{g(\alpha_{k})}{f_{2}(x_{k})}&\\
&\qquad\quad\, k\leftarrow k+1&\nonumber
\end{flalign}
\EndIf
\EndWhile
\end{algorithmic}
\end{algorithm}
In this subsection, we apply the accelerated Newton iteration~\eqref{eq:Motivation} to solve \eqref{eq:Fractional_problem}. 
Since $g$ defined in \eqref{eq:parametric} may be nondifferentiable, we generalize the iteration by substituting the derivative with a subgradient.
Furthermore, we incorporate safeguards to ensure global convergence. The algorithm is formally stated in Algorithm~\ref{Algorithm: Accelerated Dinkelbach}, 
followed by a discussion of key observations.

{\color{black}
In contrast to the classical Dinkelbach method (which generates a monotonic sequence), the accelerated Dinkelbach method exhibits \textbf{non-monotonic behavior}.
Specifically,
a key proposition regarding this property is stated below.}
    \begin{proposition}
    \label{prop}
    For the sequence \(\{\alpha_k\}\) generated by Algorithm~\ref{Algorithm: Accelerated Dinkelbach}, there exists no two 
    consecutive  iterates {\color{black}that} both lie strictly below $\alpha^{*}$.
    \end{proposition}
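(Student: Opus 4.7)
The plan is to exploit the convexity of $g$: any tangent-Newton step from an arbitrary point $\alpha$, using any subgradient $f_2(x_\alpha) \in \partial g(\alpha)$, necessarily lands on or above $\alpha^*$. Indeed, the subgradient inequality gives
\[
g\!\left(\alpha - \frac{g(\alpha)}{f_2(x_\alpha)}\right) \;\ge\; g(\alpha) - f_2(x_\alpha)\cdot\frac{g(\alpha)}{f_2(x_\alpha)} \;=\; 0,
\]
and since $g$ is strictly increasing with unique root $\alpha^*$, this yields $\alpha - g(\alpha)/f_2(x_\alpha) \ge \alpha^*$. The same conclusion trivially extends to the minimum of any finite collection of such tangent-Newton points.

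Next I would classify, for each iterate in $\{\alpha_k\}$, which of the three update formulas produced it. The initial iterate $\alpha_0$ is a tangent-Newton step from $\alpha_{-1}$, hence $\alpha_0 \ge \alpha^*$. Thereafter, every $\alpha_n$ is produced by exactly one of \eqref{eq:original Dinkelbach}, \eqref{eq:alpha sequence}, or \eqref{eq:min tangent}. The first is a single tangent-Newton step and the third is the minimum of two such steps, so by the convexity argument above, both kinds of iterates already satisfy $\alpha_n \ge \alpha^*$. Consequently, only an iterate generated by the accelerated secant-type formula \eqref{eq:alpha sequence}, which has no intrinsic convexity safeguard, can possibly satisfy $\alpha_n < \alpha^*$.

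The proposition then follows by a direct reading of Algorithm~\ref{Algorithm: Accelerated Dinkelbach}: whenever \eqref{eq:alpha sequence} produces an iterate with $g(\alpha_{k+1}) < 0$, the inner test is triggered and the very next iterate in the sequence is defined by \eqref{eq:min tangent}, which by the convexity lemma satisfies the $\ge \alpha^*$ bound. Hence any iterate strictly below $\alpha^*$ is always immediately followed by an iterate at least $\alpha^*$, and no two consecutive iterates can both lie strictly below $\alpha^*$.

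The main obstacle is purely organizational: the internal counter $k$ advances by one step in some branches of Algorithm~\ref{Algorithm: Accelerated Dinkelbach} and by two in others, so the argument hinges on matching the branch that can generate a sub-$\alpha^*$ iterate with the branch that the algorithm then forces into the safeguard \eqref{eq:min tangent}. Once this bookkeeping is carefully spelled out, the proof reduces to a single application of the subgradient inequality and presents no analytic difficulty.
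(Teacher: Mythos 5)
Your proposal is correct and follows essentially the same route as the paper's proof: both reduce the claim to the subgradient inequality showing that a tangent-Newton step (or a minimum of such steps) from any point lands at or above $\alpha^*$, observe that only the secant-type update \eqref{eq:alpha sequence} can therefore produce an iterate below $\alpha^*$, and note that the algorithm's branching immediately follows any such iterate with the safeguarded step \eqref{eq:min tangent}. The paper phrases this as "the latest iterate after each loop pass satisfies $\alpha_k \ge \alpha^*$," but the bookkeeping and the underlying convexity argument are the same as yours.
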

    
\begin{proof}
    {\color{black}
We begin by proving a key intermediate result.} For every index $k \geq 1$, following each update step of Algorithm~\ref{Algorithm: Accelerated Dinkelbach} (i.e., either $k \mapsto k+1$ or $k \mapsto k+2$), the newly generated iterate (corresponding to the updated index $k$) satisfies $\alpha_k \geq \alpha^*$. We verify this by analyzing all three iterate-generation procedures specified in the algorithm: \eqref{eq:alpha sequence}, \eqref{eq:min tangent}, and \eqref{eq:original Dinkelbach}.

 If $\alpha_{k+1}$ is generated by \eqref{eq:alpha sequence}, as specified by update rules, we perform two  checks: (1) If $g(\alpha_{k+1}) < 0$, then $\alpha_{k+2}$ is generated by \eqref{eq:min tangent}. 
 Analogously to the proof of Lemma~\ref{lemma:inequ_1}, we have $\alpha_{k+2} \geq \alpha^*$. After updating $k \leftarrow k+2$, the latest iterate $\alpha_k$ satisfies $\alpha_k \geq \alpha^*$.
       (2) If $g(\alpha_{k+1}) \ge 0$, we have $\alpha_{k+1} \ge  \alpha^*$. After updating $k \leftarrow k+1$, the latest iterate $\alpha_k$ satisfies $\alpha_k \geq \alpha^*$.
  If an $\alpha_{k+1}$ is generated by  \eqref{eq:original Dinkelbach}, similarly, we have $\alpha_{k+1}\ge \alpha^{*}$. After updating $k\leftarrow k+1$, the latest iterate $\alpha_{k}\ge \alpha^{*}$.
  We conclude that after updating $k$, the latest point satisfies $\alpha_{k}\ge \alpha^{*}$.

  Given  $\alpha_{-1},\alpha_{0}\ge \alpha^{*}$ 
  {\color{black}and every newly generated iterate also satisfies this bound, it is impossible for two consecutive iterates in the sequence to both lie strictly below
  $\alpha^{*}$.}
    \end{proof}
  
{\color{black}To further characterize the algorithm's behavior, we now examine the screening condition that triggers iteration \eqref{eq:alpha sequence}:
  \begin{equation}
  \label{eq:cond}
   g(\alpha_{k-1})f_{2}(x_{k})(\alpha_{k}-\alpha_{k-1})\le \rho g(\alpha_{k})(g(\alpha_{k})-g(\alpha_{k-1})),
  \end{equation}
where $\rho > 1$ is a predefined constant and $\alpha_k \geq \alpha^*$ by the intermediate result in Proposition~\ref{prop} (since $\alpha_k$ is the latest iterate).

Note that if $\alpha_{k-1} < \alpha^*$, then the left-hand side of \eqref{eq:cond} is negative while the right-hand side is positive, implying the inequality holds automatically. Furthermore, Theorem~\ref{theorem:accelerated convergence} will establish that the subsequence of $\{\alpha_k\}$ with non-negative $g$-values is strictly decreasing, which rules out the possibility of $\alpha^* < \alpha_{k-1} \leq \alpha_k$.

Consequently, \eqref{eq:cond} holds only in the following two cases:
\begin{itemize}
    \item \textbf{Case 1}: $\alpha_{k-1} < \alpha^* < \alpha_k$ (By Proposition~\ref{prop}, $\alpha_{k-1}<\alpha^{*}$ implies $ \alpha_{k}\ge \alpha^{*}$).
    \item \textbf{Case 2}: $\alpha^* < \alpha_k < \alpha_{k-1}$ satisfying~\eqref{eq:cond}.
\end{itemize}
We will analyze \eqref{eq:alpha sequence} under these two cases to establish the algorithm's convergence, and detailed discussions will be presented in the next subsection.}

\subsection{Global Convergence}
This subsection establishes the global convergence of accelerated Dinkelbach method~\ref{Algorithm: Accelerated Dinkelbach}. Our analysis proceeds as follows.
\begin{itemize}
    \item Lemma~\ref{Lemma:justice} ensures the well-posedness of the iteration \eqref{eq:alpha sequence} {\color{black}under either \textbf{Case 1} or \textbf{2}}, since it ensures that the denominator never vanishes.
    
    \item 
Lemmas~\ref{Lemma:case1} and~\ref{Lemma:case2} provide error bounds for iteration \eqref{eq:alpha sequence} in \textbf{Cases 1} and \textbf{2}, respectively.
    
    \item 
Theorem~\ref{theorem:accelerated convergence} establishes the global convergence of $\{\alpha_{k}\}$ to $\alpha^{*}$.
    
\item Corollary~\ref{coro:bound} deduces a uniform bound for $\{\alpha_{k}\}$.

\item Corollary~\ref{corollary} establishes the quadratic convergence rate for the subsequence of $\{\alpha_{k}\}$ with non-negative $g$-values.
\end{itemize}
\begin{lemma}\label{Lemma:justice}
Under \textbf{Case 1} or \textbf{Case 2}, the denominator in \eqref{eq:alpha sequence} is positive, ensuring that \eqref{eq:alpha sequence} is well-defined.
\end{lemma}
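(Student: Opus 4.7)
The plan is to show the denominator
\[
D := g(\alpha_k)\bigl(g(\alpha_k)-g(\alpha_{k-1})\bigr)-g(\alpha_{k-1})f_2(x_k)(\alpha_k-\alpha_{k-1})
\]
is strictly positive in each of the two admissible cases by tracking the sign of every factor, using only that $g$ is strictly increasing, $f_2>0$, and $\alpha^{*}$ is the unique root of $g$.

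For \textbf{Case 1}, where $\alpha_{k-1}<\alpha^{*}<\alpha_k$, the strict monotonicity of $g$ gives $g(\alpha_{k-1})<0<g(\alpha_k)$ and $g(\alpha_k)-g(\alpha_{k-1})>0$, while $\alpha_k-\alpha_{k-1}>0$. Thus the first summand $g(\alpha_k)(g(\alpha_k)-g(\alpha_{k-1}))$ is a product of two positives, and the second summand $-g(\alpha_{k-1})f_2(x_k)(\alpha_k-\alpha_{k-1})$ is also a product of positives (since $-g(\alpha_{k-1})>0$ and $f_2(x_k)>0$). Both terms are positive, hence $D>0$. This case is immediate and requires no use of the screening condition.

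For \textbf{Case 2}, where $\alpha^{*}<\alpha_k<\alpha_{k-1}$, the signs flip: $g(\alpha_{k-1})>g(\alpha_k)>0$, $\alpha_k-\alpha_{k-1}<0$, and $g(\alpha_k)-g(\alpha_{k-1})<0$. So the first summand is negative and the second is positive, and we need the second to dominate. This is precisely where the screening inequality \eqref{eq:cond} enters. Rewriting \eqref{eq:cond} with the flipped signs in this case yields
\[
g(\alpha_{k-1})f_2(x_k)(\alpha_{k-1}-\alpha_k)\ \ge\ \rho\, g(\alpha_k)\bigl(g(\alpha_{k-1})-g(\alpha_k)\bigr),
\]
where the right-hand side is a product of strictly positive quantities. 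Since $\rho>1$, this strict inequality gives
\[
g(\alpha_{k-1})f_2(x_k)(\alpha_{k-1}-\alpha_k)\ >\ g(\alpha_k)\bigl(g(\alpha_{k-1})-g(\alpha_k)\bigr),
\]
which, after rearrangement, is exactly $D>0$.

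The main technical point — and essentially the only nonroutine one — is keeping straight the sign flip in Case 2 and recognizing that the constant $\rho>1$ (rather than $\rho=1$) is what turns the weak screening inequality into the strict positivity of $D$. The rest is bookkeeping of signs using $f_2>0$ and the strict monotonicity of $g$, both guaranteed at the outset.
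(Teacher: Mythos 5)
Your proof is correct and follows essentially the same route as the paper: Case 1 is handled by observing that both summands of the denominator are products of positive factors, and Case 2 uses the screening condition \eqref{eq:cond} together with $\rho>1$ (the paper divides by $\rho$ and bounds the denominator below by $\frac{1-\rho}{\rho}g(\alpha_{k-1})f_2(x_k)(\alpha_k-\alpha_{k-1})>0$, while you equivalently note that $\rho B > B$ for the strictly positive right-hand side and rearrange). The two arguments differ only in algebraic bookkeeping, not in substance.
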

\begin{proof}
We first consider
\textbf{Case 1}   ($\alpha_{k-1} < \alpha^{*} < \alpha_{k}$).  
Since $g$ is strictly increasing, we have
\begin{equation}
\nonumber
g(\alpha_{k-1}) < g(\alpha^{*}) =0< g(\alpha_{k}).
\end{equation}
This implies that
\begin{equation}\label{eq:1(1-1)}
g(\alpha_{k})\big(g(\alpha_{k}) - g(\alpha_{k-1})\big) > 0.
\end{equation}
Since $f_2(x_k) > 0$ and $\alpha_k > \alpha_{k-1}$, we obtain  
\begin{equation}\label{eq:-11(1-1)}
-g(\alpha_{k-1})f_2(x_k)(\alpha_k - \alpha_{k-1}) > 0.
\end{equation}
These inequalities  \eqref{eq:1(1-1)}  and \eqref{eq:-11(1-1)} collectively guarantee that the denominator in \eqref{eq:alpha sequence} is positive.

Now consider \textbf{Case 2} ($\alpha^{*} < \alpha_{k} < \alpha_{k-1}$ satisfying~\eqref{eq:cond}).
 Dividing both sides of \eqref{eq:cond} by $\rho\,( > 1)$ yields  
\[
\frac{1}{\rho}g(\alpha_{k-1})f_2(x_k)(\alpha_k - \alpha_{k-1}) \leq g(\alpha_{k})(g(\alpha_{k}) - g(\alpha_{k-1})).
\]
We thus have a lower bound for the denominator in~\eqref{eq:alpha sequence}:
\begin{equation}\label{eq:case 2 denominator}
g(\alpha_{k})(g(\alpha_{k}) - g(\alpha_{k-1})) - g(\alpha_{k-1})f_2(x_k)(\alpha_k - \alpha_{k-1}) \geq  \frac{1-\rho}{\rho}g(\alpha_{k-1})f_2(x_k)(\alpha_k - \alpha_{k-1}).
\end{equation}
The facts
$\rho>1$, $ g(\alpha_{k-1})>g(\alpha^{*})=0$,
$f_2(x_k) > 0$ and $\alpha_k - \alpha_{k-1} < 0$ ensure that the denominator remains positive.  
Our proof is complete.
\end{proof}

To establish the convergence of the accelerated Dinkelbach method, we first reformulate \eqref{eq:alpha sequence} as 
\begin{align}
         &\alpha_{k+1}-\alpha^{*}\nonumber\\=&\alpha_{k}-\alpha^{*}-\frac{g(\alpha_{k})(g(\alpha_{k})-g(\alpha_{k-1}))(\alpha_{k}-\alpha_{k-1})}{g(\alpha_{k})(g(\alpha_{k})-g(\alpha_{k-1}))-g(\alpha_{k-1})f_{2}(x_k)(\alpha_{k}-\alpha_{k-1})}\nonumber\\
    =&\frac{g(\alpha_{k})(g(\alpha_{k})-g(\alpha_{k-1}))(\alpha_{k-1}-\alpha^{*})-g(\alpha_{k-1})f_{2}(x_k)(\alpha_{k}-\alpha_{k-1})(\alpha_{k}-\alpha^{*})}{g(\alpha_{k})(g(\alpha_{k})-g(\alpha_{k-1}))-g(\alpha_{k-1})f_{2}(x_k)(\alpha_{k}-\alpha_{k-1})}\nonumber\\
    =&(\alpha_{k}-\alpha^{*})
\frac{\delta g(\alpha_{k-1},\alpha^{*})f_{2}(x_{k})-\delta g(\alpha_{k},\alpha^{*})\delta g (\alpha_{k},\alpha_{k-1})}{\delta g(\alpha_{k-1},\alpha^{*})f_{2}(x_{k})-\delta g(\alpha_{k},\alpha^{*})\delta g (\alpha_{k},\alpha_{k-1})\frac{\alpha_{k}-\alpha^{*}}{\alpha_{k-1}-\alpha^{*}} },\label{eq:simpler}
\end{align}
where $g(\alpha^{*})=0$ is used in the last equality.
{\color{black}
The error satisfies the bounds in the following lemmas.}
\begin{lemma}\label{Lemma:case1}
    For $k=0,1,\cdots$, suppose $\alpha_{k+1}$ is generated by \eqref{eq:alpha sequence} and  the previous points
     $\alpha_{k}$ and $\alpha_{k-1}$ satisfying \textbf{Case 1}  ($\alpha_{k-1} < \alpha^{*} < \alpha_{k}$), the behavior of $\alpha_{k+1}$ splits into
\begin{itemize}
    \item \textbf{Case 1.1:} $\alpha_{k+1} \geq \alpha^{*}$. This subcase leads to
    $$
    \alpha_{k+1} - \alpha^{*} \leq \alpha_{k} - \alpha^{*} - \frac{g(\alpha_{k})}{f_{2}(x_{k})}.
    $$
    
    \item \textbf{Case 1.2:} $\alpha_{k+1} < \alpha^{*}$. This subcase leads to
    $$
    |\alpha_{k+1} - \alpha^{*}| \leq M |\alpha_{k} - \alpha^{*}|,
    $$
    with 
    \begin{equation}\label{eq:M}
    M = \frac{f_{2}(x_{-1}) - \min\limits_{\xi \in \mathcal{F}} f_{2}(\xi)}{\min\limits_{\xi \in \mathcal{F}} f_{2}(\xi)} > 0.
    \end{equation}
\end{itemize}
\end{lemma}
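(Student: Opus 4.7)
The strategy is to unpack the identity \eqref{eq:simpler} using the shorthand $u=\delta g(\alpha_{k-1},\alpha^{*})$, $v=\delta g(\alpha_{k},\alpha^{*})$, $w=\delta g(\alpha_{k-1},\alpha_{k})$, and $\phi=f_{2}(x_{k})$, all of which are strictly positive by Corollary~\ref{coro_supply} and the standing assumption $f_{2}>0$. In Case~1 the ratio $r=(\alpha_{k}-\alpha^{*})/(\alpha_{k-1}-\alpha^{*})$ is negative, so \eqref{eq:simpler} takes the compact form $\alpha_{k+1}-\alpha^{*}=(\alpha_{k}-\alpha^{*})(u\phi-vw)/(u\phi+vw|r|)$. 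Convexity of $g$ further delivers the three-slopes inequality $u\leq w\leq v$, which I would use in both subcases.

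For Case~1.1 ($\alpha_{k+1}\geq\alpha^{*}$, equivalently $u\phi\geq vw$), I would enlarge the denominator, noting that since the numerator $u\phi-vw$ is non-negative this only decreases the ratio: $(u\phi-vw)/(u\phi+vw|r|)\leq 1-vw/(u\phi)$. The bound $w\geq u$ then yields $vw/(u\phi)\geq v/\phi$, so the factor is at most $1-v/\phi$. Multiplying by $\alpha_{k}-\alpha^{*}$ and using the identity $(\alpha_{k}-\alpha^{*})\,v=g(\alpha_{k})$ (which follows from $g(\alpha^{*})=0$) recovers exactly $\alpha_{k+1}-\alpha^{*}\leq \alpha_{k}-\alpha^{*}-g(\alpha_{k})/f_{2}(x_{k})$.

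For Case~1.2 ($u\phi<vw$), I would work with $\alpha^{*}-\alpha_{k+1}=(\alpha_{k}-\alpha^{*})(vw-u\phi)/(u\phi+vw|r|)$. Dropping the positive term $vw|r|$ from the denominator and using $w\leq\phi$ in the numerator (so $vw-u\phi\leq\phi(v-u)$) causes $\phi$ to cancel and leaves $\alpha^{*}-\alpha_{k+1}\leq(v-u)(\alpha_{k}-\alpha^{*})/u$. To turn this into the $M$-bound, I would invoke Lemma~\ref{lemma:subgradient_property} to obtain $v\leq f_{2}(x_{k})$ and $u\geq f_{2}(x_{k-1})\geq\min_{\xi\in\mathcal{F}}f_{2}(\xi)$, and couple this with the subgradient-monotonicity estimate $f_{2}(x_{k})\leq f_{2}(x_{-1})$, valid because $\alpha_{k}\leq\alpha_{-1}$. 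The resulting inequality $(v-u)/u\leq M$ closes the proof.

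The main obstacle is the bound $f_{2}(x_{k})\leq f_{2}(x_{-1})$, which rests on the invariant $\alpha_{k}\leq\alpha_{-1}$ for iterates lying above $\alpha^{*}$. This invariant is not part of the Case~1 hypothesis and must be justified separately from the algorithm's construction: initialization gives $\alpha_{0}\leq\alpha_{-1}$ since $g(\alpha_{-1})\geq 0$, and each subsequent update — whether \eqref{eq:alpha sequence}, the min-tangent safeguard \eqref{eq:min tangent}, or the plain Dinkelbach step \eqref{eq:original Dinkelbach} — preserves this bound on the above-$\alpha^{*}$ subsequence. Apart from this structural point, the derivation reduces to a clean chain of three-slopes estimates, subgradient inclusions from Lemma~\ref{lemma:subgradient_property}, and elementary algebra.
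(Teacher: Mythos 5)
Your proof is correct and follows essentially the same route as the paper's: the same identity \eqref{eq:simpler}, the same lower bound on the denominator obtained by discarding the term carrying the negative ratio $(\alpha_k-\alpha^*)/(\alpha_{k-1}-\alpha^*)$, the three-chords (sugar-water) inequality $\delta g(\alpha_{k-1},\alpha^*)\le\delta g(\alpha_{k-1},\alpha_k)$ in Case~1.1, and the subgradient bounds of Lemma~\ref{lemma:subgradient_property} in Case~1.2. Your explicit justification of $f_2(x_k)\le f_2(x_{-1})$ via the invariant $\alpha_k\le\alpha_{-1}$ is a welcome addition, since the paper passes directly to the definition of $M$ without spelling that step out.
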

\begin{proof}
 Since $\alpha_{k-1}<\alpha^{*}<\alpha_{k}$ and $\delta g>0$  by Corollary~\ref{coro_supply}, we have a lower estimate of the denominator:
\begin{equation}\label{eq:lower denominator bound}
\delta g(\alpha_{k-1},\alpha^{*})f_{2}(x_{k}) - \delta g(\alpha_{k},\alpha^{*})\delta g(\alpha_{k},\alpha_{k-1})\frac{\alpha_{k}-\alpha^{*}}{\alpha_{k-1}-\alpha^{*}} \geq \delta g(\alpha_{k-1},\alpha^{*})f_{2}(x_{k}).
\end{equation}
By Lemma~\ref{lemma:subgradient_property}, we get the key inequalities from $\alpha_{k-1}\le\alpha^{*}<\alpha_{k}$:
\begin{equation}
\label{eq:five}
f_{2}(x_{k-1})\le \delta g(\alpha_{k-1},\alpha^{*})\le f_{2}(x^{*})\le \delta g(\alpha^{*},\alpha_{k})\le f_{2}(x_{k}),\, \delta g(\alpha_{k},\alpha_{k-1})\le f_{2}(x_{k}). 
\end{equation}
Besides, given that $\delta g(\alpha^{*},\alpha_{k-1})\le \delta g(\alpha_{k},\alpha^{*})$, 
we derive the following inequality by Sugar Water Inequality:
\begin{equation}
\label{eq:sugar water}
\delta g(\alpha_{k-1},\alpha^{*})\le \frac{g(\alpha^{*})-g(\alpha_{k-1})+g(\alpha_{k})-g(\alpha^{*})}{\alpha^{*}-\alpha_{k-1}+\alpha_{k}-\alpha^{*}}=\delta g(\alpha_{k-1},\alpha_{k}).
\end{equation}
If $\alpha_{k+1}$ falls into \textbf{Case 1.1} (i.e., $\alpha_{k+1}
\ge \alpha^{*}$), the numerator of \eqref{eq:simpler} is positive. \eqref{eq:sugar water} leads to a upper estimate of the numerator
\begin{equation}\label{eq:numerator upper bound}
\delta g(\alpha_{k-1},\alpha^{*})f_{2}(x_{k})-\delta g(\alpha_{k},\alpha^{*})\delta g(\alpha_{k},\alpha_{k-1})\leq \delta g(\alpha_{k-1},\alpha^{*})(f_{2}(x_{k})-\delta g(\alpha_{k},\alpha^{*})).
\end{equation}
Thus we derive the following bound:
\begin{align}
    \alpha_{k+1}-\alpha^{*}&= (\alpha_{k}-\alpha^{*})
\frac{\delta g(\alpha_{k-1},\alpha^{*})f_{2}(x_{k})-\delta g(\alpha_{k},\alpha^{*})\delta g (\alpha_{k},\alpha_{k-1})}{\delta g(\alpha_{k-1},\alpha^{*})f_{2}(x_{k})-\delta g(\alpha_{k},\alpha^{*})\delta g (\alpha_{k},\alpha_{k-1})\frac{\alpha_{k}-\alpha^{*}}{\alpha_{k-1}-\alpha^{*}}}\nonumber\\
&\le (\alpha_{k}-\alpha^{*})\frac{\delta g(\alpha_{k-1},\alpha^{*})(f_{2}(x_{k})-\delta g(\alpha_{k},\alpha^{*}))}{\delta g(\alpha_{k-1},\alpha^{*})f_{2}(x_{k})} \ (\text{by}\ \eqref{eq:lower denominator bound} \ \text{and}\ \eqref{eq:numerator upper bound})\nonumber\\
&=(\alpha_{k}-\alpha^{*})\frac{f_{2}(x_{k})-\delta g(\alpha_{k},\alpha^{*})}{f_{2}(x_{k})} \nonumber\\
&=\alpha_{k}-\alpha^{*}-\frac{g(\alpha_{k})}{f_{2}(x_{k})}.\nonumber
\end{align}
If $\alpha_{k+1}$ falls into \textbf{Case 1.2} (i.e., $\alpha_{k+1}
< \alpha^{*}$), the numerator of \eqref{eq:simpler} is negative. Similar argument indicates
\begin{align}
\alpha^{*}-\alpha_{k+1}&= (\alpha_{k}-\alpha^{*})
\frac{\delta g(\alpha_{k},\alpha^{*})\delta g (\alpha_{k},\alpha_{k-1})-\delta g(\alpha_{k-1},\alpha^{*})f_{2}(x_{k})}{\delta g(\alpha_{k-1},\alpha^{*})f_{2}(x_{k})-\delta g(\alpha_{k},\alpha^{*})\delta g (\alpha_{k},\alpha_{k-1})\frac{\alpha_{k}-\alpha^{*}}{\alpha_{k-1}-\alpha^{*}}}\nonumber\\
&\le (\alpha_{k}-\alpha^{*})\frac{\delta g(\alpha_{k},\alpha^{*})\delta g(\alpha_{k},\alpha_{k-1})-\delta g(\alpha_{k-1},\alpha^{*})f_{2}(x_{k})}{\delta g(\alpha_{k-1},\alpha^{*})f_{2}(x_{k})} \,( \text{by}\ \eqref{eq:lower denominator bound})\nonumber\\ 
&\le (\alpha_{k}-\alpha^{*})\frac{f_{2}(x_{k})f_{2}(x_{k})-f_{2}(x_{k-1})f_{2}(x_{k})}{f_{2}(x_{k-1})f_{2}(x_{k})}\,( \text{by}\ \eqref{eq:five})\nonumber\\
&= (\alpha_{k}-\alpha^{*})\frac{f_{2}(x_{k})-f_{2}(x_{k-1})}{f_{2}(x_{k-1})}\nonumber\\
&\le M(\alpha_{k}-\alpha^{*})\label{eq:boundeded},
\end{align}
\noindent where \eqref{eq:boundeded} follows from the definition of $M$ in \eqref{eq:M}.
The proof is complete.
\end{proof}
Figure~\ref{fig:total2} provides a graphical 
representation of Lemma~\ref{Lemma:case1}.
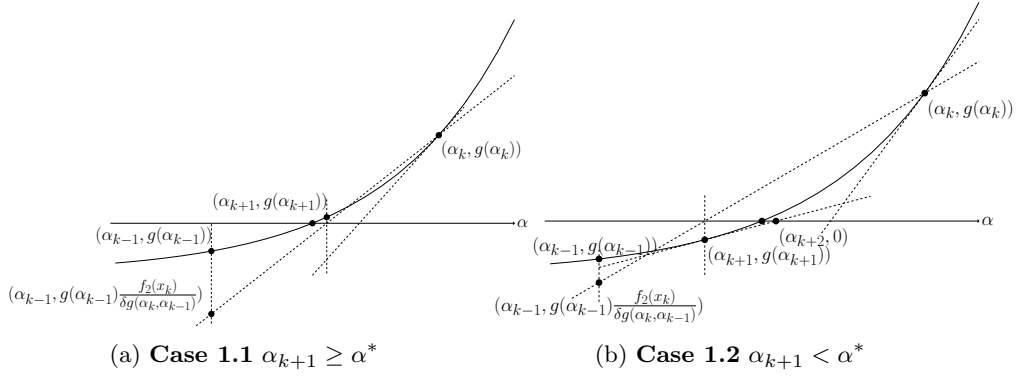
\begin{figure}[h]
        \hfill 
    \begin{subfigure}[b]{0.48\textwidth} 
        \centering
        \begin{adjustbox}{width=1.1\linewidth} 
        \begin{tikzpicture}[scale=2.5, font=\Huge] 
            
            \draw[->] (-4,0) -- (4,0)node[right, font=\Huge] {$\alpha$};
            \draw[domain=-3.9:4]plot(\x,{1.5^(\x)-1});
            \draw[fill=black] (0,0)circle (1.5pt); 
            \draw[fill=black] (2.5,1.75567596) circle (1.5pt)node[below right, font=\Huge, yshift=-5pt] {$(\alpha_{k},g(\alpha_{k}))$}; 
            
            \draw[fill=black](-2,-0.5556) circle (1.5pt)node[above left, font=\Huge, xshift=5pt, yshift=5pt] {$(\alpha_{k-1},g(\alpha_{k-1}))$};
            \draw[fill=black](-2,-1.80832764) circle (1.5pt)node[above left, font=\Huge, xshift=-8pt, yshift=0pt] {$(\alpha_{k-1},g(\alpha_{k-1})\frac{f_{2}(x_{k})}{\delta g (\alpha_{k},\alpha_{k-1})})$};
            \draw[dashed, line width=0.5][domain=0:3]plot(\x,{1.11733045099*(\x-2.5)+1.75567596});
            \draw[dashed, line width=0.5][domain=-2:0]plot(-2,\x);
            \draw[dashed, line width=0.5][domain=-2.3:4]plot(\x,{0.7920008*(\x-2.5)+1.75567596});
            \draw[dashed, line width=0.5][domain=-1:0.5]plot(0.283239739,\x);

            \draw[fill=black] (0.283239739,0.121698249841035) circle (1.5pt) node[above left, font=\Huge, xshift=5pt, yshift=5pt] {$(\alpha_{k+1},g(\alpha_{k+1}))$}; 
        \end{tikzpicture}
        \end{adjustbox}
        \caption{\textbf{Case 1.1} $\alpha_{k+1}\ge \alpha^{*}$}
        \label{fig_tikz2}
    \end{subfigure}
    \centering
    \begin{subfigure}[b]{0.48\textwidth} 
        \centering
        \begin{adjustbox}{width=1.1\linewidth} 
        \begin{tikzpicture}[scale=2.5, font=\Huge] 
            \draw[->] (-4,0) -- (4,0) node[anchor=west, font=\Huge]  {$\alpha$};
            \draw[domain=-3.9:4]plot(\x,{1.5^(\x)-1});
            \draw[fill=black] (0,0)circle (1.5pt);
            \draw[fill=black] (3,2.375) circle (1.5pt)node[below right, font=\Huge, yshift=-5pt] {$(\alpha_{k},g(\alpha_{k}))$}; 

            \draw[dashed, line width=0.5][domain=-3.5:4]plot(\x,{(0.475+0.11111)*(\x-3)+2.375});
            
            \draw[dashed, line width=0.5][domain=-1:0.5]plot(-1.05213278,\x);
            
            \draw[dashed, line width=0.5][domain=-1.5:-0.5]plot(-3,\x);\draw[fill=black](-3,-0.703703703704) circle (1.5pt)node[above, font=\Huge, xshift=-5pt] {$(\alpha_{k-1},g(\alpha_{k-1}))$}; \draw[fill=black] (-3,-1.14166) circle (1.5pt) node[ below,font=\Huge, yshift=-5pt] {$(\alpha_{k-1},g(\alpha_{k-1})\frac{f_{2}(x_{k})}{\delta g(\alpha_{k},\alpha_{k-1})})$};
            \draw[fill=black] (-1.05213278,-0.347277454) circle (1.5pt) node[below right, font=\Huge, yshift=-5pt] {$(\alpha_{k+1},g(\alpha_{k+1}))$}; 
            \draw[dashed, line width=0.5][domain=1.1:4]plot(\x,{1.368444739*(\x-3)+2.375});
            
            \draw[dashed, line width=0.5][domain=-3:2]plot(\x,{0.264656147*(\x+1.05213278)-0.347277454});
            \draw[fill=black] (0.26,0) circle (1.5pt)node[below right, font=\Huge, yshift=-5pt] {$(\alpha_{k+2},0)$}; 
        \end{tikzpicture}
        \end{adjustbox}
\caption{\textbf{Case 1.2} $\alpha_{k+1}< \alpha^{*}$}
        \end{subfigure}
    \caption{In
    \textbf{Case 1.1},  
   the proposed method demonstrates better approximation  
than the Dinkelbach method.
    In \textbf{Case 1.2}, the approximation error is bounded by $M|\alpha_k - \alpha^*|$. The subsequent iterate $\alpha_{k+2}$ is determined by the tangents at $\alpha_{k}$ and $\alpha_{k-1}$.}  
    \label{fig:total2}
\end{figure}
\begin{lemma}\label{Lemma:case2}
    For $k=0,1,\cdots$, suppose $\alpha_{k+1}$ is generated by \eqref{eq:alpha sequence} and  the previous points
     $\alpha_{k}$ and $\alpha_{k-1}$ satisfying \textbf{Case 2}  ($\alpha^* < \alpha_k < \alpha_{k-1}$ under \eqref{eq:cond}), the behavior of $\alpha_{k+1}$ splits into
\begin{itemize}
    \item \textbf{Case 2.1}: $\alpha_{k+1} \geq \alpha^{*}$. This subcase leads to
    $$
    \alpha_{k+1} - \alpha^{*} \leq \alpha_{k} - \alpha^{*} - \frac{g(\alpha_{k})}{f_{2}(x_{k})}.
    $$
    
    \item \textbf{Case 2.2}: $\alpha_{k+1} < \alpha^{*}$. This subcase leads to
    $$
    |\alpha_{k+1} - \alpha^{*}| \leq M' |\alpha_{k} - \alpha^{*}|,
    $$
 where 
    \begin{equation}
    \label{eq:M'}
    M' = \frac{\rho \big( f_{2}(x_{-1}) - f_{2}(x^{*}) \big)}{(\rho-1)f_{2}(x^{*})} > 0.
    \end{equation}
\end{itemize}
\end{lemma}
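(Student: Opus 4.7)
The plan is to proceed in close parallel with the proof of Lemma~\ref{Lemma:case1} by exploiting the reformulation \eqref{eq:simpler}, with two critical adjustments: every subgradient inequality is re-oriented according to the new ordering $\alpha^*<\alpha_k<\alpha_{k-1}$, and the screening condition \eqref{eq:cond} (via \eqref{eq:case 2 denominator}) takes over the role that the trivial positivity of $\delta g(\alpha_{k-1},\alpha^*)f_2(x_k)$ played in Case 1. As a preparatory step I would record the bounds
\[
f_2(x^*) \le \delta g(\alpha_k,\alpha^*) \le f_2(x_k),\ \ f_2(x^*) \le \delta g(\alpha_{k-1},\alpha^*) \le f_2(x_{k-1}),\ \ f_2(x_k)\le \delta g(\alpha_k,\alpha_{k-1}) \le f_2(x_{k-1}),
\]
from Lemma~\ref{lemma:subgradient_property}, noting that the key inequality $\delta g(\alpha_k,\alpha_{k-1})\ge f_2(x_k)$ is reversed relative to \eqref{eq:five}.

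For \textbf{Case 2.1}, writing $\alpha_{k+1}-\alpha^*=(\alpha_k-\alpha^*)N/D$ with $N,D$ as in \eqref{eq:simpler}, I would observe that $D-N=\delta g(\alpha_k,\alpha^*)\delta g(\alpha_k,\alpha_{k-1})(\alpha_{k-1}-\alpha_k)/(\alpha_{k-1}-\alpha^*)$. Clearing divided differences via $g(\alpha_k)=\delta g(\alpha_k,\alpha^*)(\alpha_k-\alpha^*)$ and $g(\alpha_{k-1})=\delta g(\alpha_{k-1},\alpha^*)(\alpha_{k-1}-\alpha^*)$, the target bound reduces after cancellation to $g(\alpha_k)(\delta g(\alpha_k,\alpha_{k-1})-f_2(x_k))\ge 0$, which is immediate from the preparatory inequalities since $g(\alpha_k)\ge 0$.

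For \textbf{Case 2.2}, the screening condition becomes decisive. A direct computation shows the algebraic identity $\mathcal{D}=D(\alpha_{k-1}-\alpha^*)(\alpha_{k-1}-\alpha_k)$, where $\mathcal{D}$ is the denominator in \eqref{eq:alpha sequence} used in Lemma~\ref{Lemma:justice}. Substituting the lower bound \eqref{eq:case 2 denominator} then produces
\[
D \;\ge\; \frac{\rho-1}{\rho}\,\delta g(\alpha_{k-1},\alpha^*)\,f_2(x_k) \;\ge\; \frac{\rho-1}{\rho}\,f_2(x^*)\,f_2(x_k).
\]
For the numerator, the preparatory inequalities give
\[
-N \;=\; \delta g(\alpha_k,\alpha^*)\delta g(\alpha_k,\alpha_{k-1})-\delta g(\alpha_{k-1},\alpha^*)f_2(x_k) \;\le\; f_2(x_k)\bigl(f_2(x_{k-1})-f_2(x^*)\bigr).
\]
Dividing and invoking the uniform upper bound $f_2(x_{k-1})\le f_2(x_{-1})$ (the same bound used implicitly in Lemma~\ref{Lemma:case1}, justified by Corollary~\ref{coro:bound}, whose proof does not rely on the present lemma) yields $|\alpha_{k+1}-\alpha^*|\le M'|\alpha_k-\alpha^*|$ with $M'$ as in \eqref{eq:M'}.

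The main obstacle is extracting the clean lower bound on $D$: unlike Case 1, where positivity came for free from the bracketing $\alpha_{k-1}<\alpha^*<\alpha_k$, here both iterates lie strictly above $\alpha^*$ and both the positivity of $D$ and its quantitative lower bound hinge entirely on \eqref{eq:cond}. Verifying the identity $\mathcal{D}=D(\alpha_{k-1}-\alpha^*)(\alpha_{k-1}-\alpha_k)$ and propagating the factor $(\rho-1)/\rho$ through the reduction is the delicate bookkeeping step; it is precisely this factor that produces the $\rho/(\rho-1)$ in the constant $M'$ of the statement.
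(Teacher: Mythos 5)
Your proposal is correct and follows essentially the same route as the paper's proof: both work from the reformulation \eqref{eq:simpler}, use the re-oriented subgradient bounds (the paper's \eqref{eq:five1}, including the reversed inequality $\delta g(\alpha_k,\alpha_{k-1})\ge f_2(x_k)$), reduce \textbf{Case 2.1} to the nonnegativity of $g(\alpha_k)(\delta g(\alpha_k,\alpha_{k-1})-f_2(x_k))$ (the paper phrases this as substituting $f_2(x_k)$ for $\delta g(\alpha_k,\alpha_{k-1})$ in the denominator, which after cancellation is the same inequality), and in \textbf{Case 2.2} lower-bound the denominator via \eqref{eq:case 2 denominator} exactly as you do, with your identity $\mathcal{D}=D(\alpha_{k-1}-\alpha^{*})(\alpha_{k-1}-\alpha_k)$ checking out. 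One small caveat: your appeal to Corollary~\ref{coro:bound} for $f_2(x_{k-1})\le f_2(x_{-1})$ is circular as stated, since that corollary's proof cites the present lemma; the paper itself uses this bound without justification, so this is a shared (and minor) loose end rather than a defect of your argument.
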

\begin{proof}
In \textbf{Case 2}, the denominator of \eqref{eq:simpler} is positive by Lemma~\ref{Lemma:justice}.
By Lemma~\ref{lemma:subgradient_property}, we get the following key inequalities  from $\alpha^{*}<\alpha_{k}<\alpha_{k-1}$:
\begin{equation}
\label{eq:five1}
    f_{2}(x^{*})\le \delta g(\alpha^{*},\alpha_{k})\le f_{2}(x_{k})\le \delta g(\alpha_{k},\alpha_{k-1})\le f_{2}(x_{k-1}),\, \delta g(\alpha_{k-1},\alpha^{*})\ge f_{2}(x^{*}).
\end{equation}
If $\alpha_{k+1}$ falls into \textbf{Case 2.1} (i.e., $\alpha_{k+1}\ge \alpha^{*}$), the numerator of \eqref{eq:simpler} is positive. 
Then we derive the upper estimate.
\begin{align}
    \alpha_{k+1}-\alpha^{*}&=(\alpha_{k}-\alpha^{*})\left(1 - \frac{\delta g(\alpha_{k},\alpha^{*})\delta g(\alpha_{k},\alpha_{k-1})\frac{\alpha_{k-1}-\alpha_{k}}{\alpha_{k-1}-\alpha^{*}}}{\delta g(\alpha_{k-1},\alpha^{*})f_{2}(x_{k}) - \delta g(\alpha_{k},\alpha^{*})\delta g (\alpha_{k},\alpha_{k-1})\frac{\alpha_{k}-\alpha^{*}}{\alpha_{k-1}-\alpha^{*}}}\right)\nonumber\\
    &\le(\alpha_{k}-\alpha^{*})\left(1 - \frac{\delta g(\alpha_{k},\alpha^{*})\delta g(\alpha_{k},\alpha_{k-1})\frac{\alpha_{k-1}-\alpha_{k}}{\alpha_{k-1}-\alpha^{*}}}{\delta g(\alpha_{k-1},\alpha^{*})f_{2}(x_{k})-\delta g(\alpha_{k},\alpha^{*})f_{2}(x_{k})\frac{\alpha_{k}-\alpha^{*}}{\alpha_{k-1}-\alpha^{*}}}\right)\ (\text{by}\ \eqref{eq:five1})\nonumber\\
    &=(\alpha_{k}-\alpha^{*})\left(1 - \frac{\delta g(\alpha_{k},\alpha^{*})\delta g(\alpha_{k},\alpha_{k-1})\frac{\alpha_{k-1}-\alpha_{k}}{\alpha_{k-1}-\alpha^{*}}}{f_{2}(x_{k})\left(\delta g(\alpha_{k-1},\alpha^{*})-\delta g(\alpha_{k},\alpha^{*})\frac{\alpha_{k}-\alpha^{*}}{\alpha_{k-1}-\alpha^{*}}\right)}\right)\nonumber\\
    &=(\alpha_{k}-\alpha^{*})\left(1 - \frac{\delta g(\alpha_{k},\alpha^{*})\delta g(\alpha_{k},\alpha_{k-1})\frac{\alpha_{k-1}-\alpha_{k}}{\alpha_{k-1}-\alpha^{*}}}{f_{2}(x_{k})\delta g(\alpha_{k-1},\alpha_{k})\frac{\alpha_{k-1}-\alpha_{k}}{\alpha_{k-1}-\alpha^{*}}}\right)\nonumber\\
    &=(\alpha_{k}-\alpha^{*})\left(1 - \frac{\delta g(\alpha_{k},\alpha^{*})}{f_{2}(x_{k})}\right)=\alpha_{k}-\alpha^{*}-\frac{g(\alpha_{k})}{f_{2}(x_{k})}.\nonumber
\end{align}
If $\alpha_{k+1}$ falls into \textbf{Case 2.2} (i.e., $\alpha_{k+1}< \alpha^{*}$), the numerator of \eqref{eq:simpler} is negative. 
We have the upper estimate.
\begin{align}
\alpha^{*}-\alpha_{k+1}&\le (\alpha_{k}-\alpha^{*})\frac{\delta g(\alpha_{k},\alpha^{*})\delta g(\alpha_{k},\alpha_{k-1})-\delta g(\alpha_{k-1},\alpha^{*})f_{2}(x_{k})}{\left(1-\frac{1}{\rho}\right)\delta g(\alpha_{k-1},\alpha^{*})f_{2}(x_{k})}\ (\text{by}\, \eqref{eq:case 2 denominator})\nonumber\\
&\le (\alpha_{k}-\alpha^{*})\frac{f_{2}(x_{k})f_{2}(x_{k-1})-f_{2}(x^{*})f_{2}(x_{k})}{\left(1-\frac{1}{\rho}\right)f_{2}(x^{*})f_{2}(x_{k})} \ (\text{by} \ \eqref{eq:five1})\nonumber\\
&\le (\alpha_{k}-\alpha^{*})\frac{f_{2}(x_{k-1})-f_{2}(x^{*})}{\left(1-\frac{1}{\rho}\right)f_{2}(x^{*})}\nonumber\\
&\le M'(\alpha_{k}-\alpha^{*}),\label{eq:detail1}
\end{align}
where \eqref{eq:detail1} follows from the definition of $M'$ in \eqref{eq:M'}. The proof is complete.
\end{proof}
{\color{black}
Figure~\ref{fig:total} provides a graphical 
representation of Lemma~\ref{Lemma:case2}.}
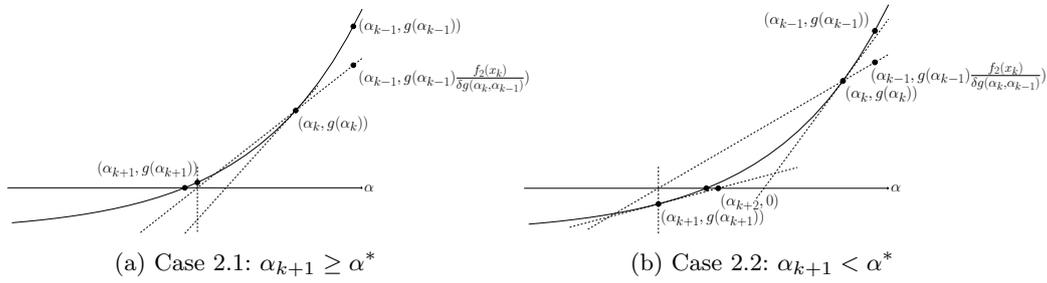
\begin{figure}[htbp]
    \begin{subfigure}[b]{0.48\textwidth} 
        \centering
        \begin{adjustbox}{width=1.1\linewidth} 
        \begin{tikzpicture}[scale=2.5, font=\Huge] 
            
            \draw[->] (-4,0) -- (4,0) node[anchor=west, font=\Huge]  {$\alpha$};
            \draw[domain=-3.9:4]plot(\x,{1.5^(\x)-1});
            \draw[fill=black] (0,0)circle (1.5pt); 
            \draw[fill=black] (2.5,1.75567596) circle (1.5pt)node[below right, font=\Huge, yshift=-5pt] {$(\alpha_{k},g(\alpha_{k}))$}; 
            \draw[fill=black] (3.8,3.66817130) circle (1.5pt)node[right, font=\Huge, xshift=5pt] {$(\alpha_{k-1},g(\alpha_{k-1}))$}; 
            \draw[fill=black] (3.8,2.785277) circle (1.5pt)node[below right, font=\Huge, xshift=5pt, yshift=5pt] {$(\alpha_{k-1},g(\alpha_{k-1})\frac{f_{2}(x_{k})}{\delta g (\alpha_{k},\alpha_{k-1})})$}; 
            \draw[dashed, line width=0.5][domain=0:3]plot(\x,{1.11733045099*(\x-2.5)+1.75567596});
            \draw[dashed, line width=0.5][domain=-1:4]plot(\x,{0.7920008*(\x-2.5)+1.75567596});
            \draw[dashed, line width=0.5][domain=-1:0.5]plot(0.283239739,\x);

            \draw[fill=black] (0.283239739,0.121698249841035) circle (1.5pt) node[above left, font=\Huge, xshift=5pt, yshift=5pt] {$(\alpha_{k+1},g(\alpha_{k+1}))$}; 
        \end{tikzpicture}
        \end{adjustbox}
        \caption{Case 2.1: $\alpha_{k+1}\ge \alpha^{*}$}
        \label{fig:tikz2}
    \end{subfigure}
        \centering
        \hfill 
    \begin{subfigure}[b]{0.48\textwidth} 
        \centering
        \begin{adjustbox}{width=1.1\linewidth} 
        \begin{tikzpicture}[scale=2.5, font=\Huge] 
            \draw[->] (-4,0) -- (4,0) node[anchor=west, font=\Huge]  {$\alpha$};
            \draw[domain=-3.9:4]plot(\x,{1.5^(\x)-1});
            \draw[fill=black] (0,0)circle (1.5pt);
            \draw[fill=black] (3,2.375) circle (1.5pt)node[below right, font=\Huge, yshift=-5pt] {$(\alpha_{k},g(\alpha_{k}))$}; 
            \draw[fill=black] (3.7,3.482679) circle (1.5pt)node[above left, font=\Huge, xshift=-5pt] {$(\alpha_{k-1},g(\alpha_{k-1}))$}; 
            \draw[fill=black] (3.7,2.785277) circle (1.5pt)node[below right, font=\Huge, xshift=-10pt, yshift=5pt] {$(\alpha_{k-1},g(\alpha_{k-1})\frac{f_{2}(x_{k})}{\delta g (\alpha_{k},\alpha_{k-1})})$}; 

            \draw[dashed, line width=0.5][domain=-2.6:4]plot(\x,{(0.475+0.11111)*(\x-3)+2.375});
            \draw[dashed, line width=0.5][domain=-1:0.5]plot(-1.05213278,\x);

            \draw[fill=black] (-1.05213278,-0.347277454) circle (1.5pt) node[below right, font=\Huge, yshift=-5pt] {$(\alpha_{k+1},g(\alpha_{k+1}))$}; 
            \draw[dashed, line width=0.5][domain=1.1:4]plot(\x,{1.368444739*(\x-3)+2.375});
            \draw[dashed, line width=0.5][domain=-3:2]plot(\x,{0.264656147*(\x+1.05213278)-0.347277454});
            \draw[fill=black] (0.26,0) circle (1.5pt)node[below right, font=\Huge, yshift=-5pt] {$(\alpha_{k+2},0)$}; 
        \end{tikzpicture}
        \end{adjustbox}
\caption{Case 2.2: $\alpha_{k+1}< \alpha^{*}$}

        \label{fig:tikz1}
    \end{subfigure}
    \caption{
In \textbf{Case 2.1}, the iteration~\eqref{eq:alpha sequence} demonstrates superior approximation compared to the Dinkelbach method. 
In \textbf{Case 2.2}, the approximation error is bounded by $M'|\alpha_k - \alpha^*|$.
The subsequent iterate $\alpha_{k+2}$ is determined by the tangents at $\alpha_{k}$ and $\alpha_{k-1}$.} 
    \label{fig:total}
\end{figure}

We now establish the global convergence of the accelerated Dinkelbach algorithm.
\begin{theorem}\label{theorem:accelerated convergence}
    For any initial $x_{-1} \in \mathcal{F}$, the sequence $\{\alpha_k\}$ generated by the accelerated Dinkelbach algorithm converges globally to $\alpha^*$.
\end{theorem}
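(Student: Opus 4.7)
The plan is to exploit the near-monotonicity of the iterates that lie on or above $\alpha^{*}$. Let $\{\alpha_{k_{j}}\}_{j\ge 0}$ denote the subsequence of iterates satisfying $g(\alpha_{k_{j}})\ge 0$ (equivalently $\alpha_{k_{j}}\ge \alpha^{*}$). By Proposition~\ref{prop}, no two consecutive iterates lie strictly below $\alpha^{*}$, so $k_{j+1}-k_{j}\in\{1,2\}$; between any two consecutive ``upper'' iterates there is at most one ``lower'' iterate. The initial points $\alpha_{-1}$ and $\alpha_{0}$ already lie above $\alpha^{*}$, which provides the base of the indexing.

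The central step is to establish the inequality
\begin{equation*}
\alpha_{k_{j+1}} \le \alpha_{k_{j}} - \frac{g(\alpha_{k_{j}})}{f_{2}(x_{k_{j}})} \qquad \text{for every } j\ge 0,
\end{equation*}
by inspecting which of the three update rules in Algorithm~\ref{Algorithm: Accelerated Dinkelbach} produces $\alpha_{k_{j+1}}$. If it comes from the standard Dinkelbach step~\eqref{eq:original Dinkelbach}, equality holds. If it comes from the accelerated step~\eqref{eq:alpha sequence} with the outcome remaining above $\alpha^{*}$ (so $k_{j+1}=k_{j}+1$), then Cases~1.1 or 2.1 of Lemmas~\ref{Lemma:case1} and~\ref{Lemma:case2} give the bound directly. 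Finally, if \eqref{eq:alpha sequence} produces a point below $\alpha^{*}$ (Cases~1.2 or 2.2), then $\alpha_{k_{j+1}}=\alpha_{k_{j}+2}$ is generated by~\eqref{eq:min tangent}, whose defining minimum contains the candidate $\alpha_{k_{j}}-g(\alpha_{k_{j}})/f_{2}(x_{k_{j}})$, so the bound again holds.

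Because $g(\alpha_{k_{j}})\ge 0$ and $f_{2}(x_{k_{j}})\le \max_{\xi\in\mathcal{F}}f_{2}(\xi)<\infty$, the above inequality yields the uniform estimate $\alpha_{k_{j+1}}\le \alpha_{k_{j}}-g(\alpha_{k_{j}})/\max_{\xi\in\mathcal{F}}f_{2}(\xi)$, so $\{\alpha_{k_{j}}\}$ is monotonically non-increasing and bounded below by $\alpha^{*}$. Denote its limit by $\alpha^{\dagger}\ge \alpha^{*}$; passing to the limit and using the continuity of $g$ gives $g(\alpha^{\dagger})\le 0$, and combined with $g(\alpha^{\dagger})\ge 0$ this forces $g(\alpha^{\dagger})=0$, whence $\alpha^{\dagger}=\alpha^{*}$ by the uniqueness of the root. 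For the interspersed lower iterates, Lemmas~\ref{Lemma:case1} and~\ref{Lemma:case2} (Cases~1.2 and 2.2) yield $|\alpha_{k_{j}+1}-\alpha^{*}|\le \max\{M,M'\}(\alpha_{k_{j}}-\alpha^{*})$ with $M,M'$ fixed finite constants; since $\alpha_{k_{j}}\to \alpha^{*}$, the lower iterates also converge to $\alpha^{*}$, and therefore the full sequence $\{\alpha_{k}\}$ converges to $\alpha^{*}$.

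The main obstacle I anticipate is the careful case-analysis supporting the displayed uniform bound: one must accurately track, within the algorithmic flow, which of~\eqref{eq:alpha sequence},~\eqref{eq:min tangent}, or~\eqref{eq:original Dinkelbach} generated each iterate and how the ``previous'' iterate used in~\eqref{eq:alpha sequence} interacts with the location of $\alpha^{*}$, so as to cover every possible transition. Once this accounting is in place, the monotone-convergence argument for the upper subsequence and the linear-contraction bound for the lower iterates follow routinely.
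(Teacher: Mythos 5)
Your proposal is correct and follows essentially the same route as the paper's proof: isolate the subsequence of iterates with $g\ge 0$, establish the key inequality $\alpha_{k_{j+1}}\le \alpha_{k_j}-g(\alpha_{k_j})/f_2(x_{k_j})$ by the same case analysis over the three update rules (using Cases 1.1/2.1 of Lemmas~\ref{Lemma:case1} and~\ref{Lemma:case2} and the structure of \eqref{eq:min tangent}), conclude convergence of that subsequence by monotonicity and a limit argument, and then control the interspersed lower iterates via the linear bounds $M,M'$ from Cases 1.2/2.2. The only cosmetic difference is that the paper organizes the key inequality as a formal induction (which also supplies the ordering $\alpha_{m-1}^{+}\le\alpha_{m-2}^{+}$ needed to place consecutive upper iterates into Case 2), a bookkeeping detail your sketch leaves implicit but which does not change the argument.
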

\begin{proof}
We assume $\{\alpha_{k}\}$ is an infinite sequence. 
{\color{black}
We partition $\{\alpha_{k}\}$ into two disjoint subsequences based on their position relative to $\alpha^{*}$:}
\begin{itemize}
\item $\{\alpha_{p}^{+}\}=\{\alpha_{k}:\alpha_{k}\ge \alpha^{*}\}=\{\alpha_{k}:g(\alpha_{k})\ge 0\}$,
\item $\{\alpha_{q}^{-}\}=\{\alpha_{k}:\alpha_{k}< \alpha^{*}\}=\{\alpha_{k}:g(\alpha_{k})< 0\}$,
\end{itemize}
{\color{black}
 where $p,q=1,2,\cdots$ (indexing starts from 1).
Proposition~\ref{prop} guarantees that $\{\alpha_p^+\}$ is infinite.}
For the sequence $\{\alpha_{p}^{+}\}$, we prove by mathematical induction that
\begin{equation}\label{eq:form Dinkelbach}
\alpha_{p}^{+} \leq \alpha_{p-1}^{+} - \frac{g(\alpha_{p-1}^{+})}{f_{2}(x_{p-1}^{+})}, \quad p = 2,3,\cdots,
\end{equation}
where $x_{p}^{+} \in \argmax_{x \in \mathcal{F}} \left\{ -f_{1}(x) + \alpha_{p}^{+} f_{2}(x) \right\}$.
Note $\alpha_{1}^{+} = \alpha_{-1} \geq \alpha^{*}$, as $\alpha_{-1} = f_1(x_{-1})/f_2(x_{-1}) \geq \min_{x \in \mathcal{F}} f_1(x)/f_2(x)$. 
From the definition of $\alpha_0 = \alpha_{-1} - g(\alpha_{-1})/f_{2}(x_{-1})$, we have
\(
\alpha_{2}^{+} = \alpha_0\) and $\alpha_{2}^{+}$ satisfies \eqref{eq:form Dinkelbach} with equality.
For the induction step ($m \geq 3$), assume \eqref{eq:form Dinkelbach} holds for all $p \leq m-1$. 
Given $\alpha_{m'}=\alpha_m^+ $ for some $m'$, we analyze two cases below.
\begin{itemize}
    \item If $\alpha_{m'-1} < \alpha^{*}$, according to Proposition~\ref{prop}, it leads to $\alpha^{*} \leq \alpha_{m'-2} = \alpha_{m-1}^{+}$. From \eqref{eq:min tangent}, we derive
    \[
    \alpha_{m}^{+} = \alpha_{m'} = \min\left\{\alpha_{m'-1} - \frac{g(\alpha_{m'-1})}{f_{2}(x_{m'-1})}, \alpha_{m'-2} - \frac{g(\alpha_{m'-2})}{f_{2}(x_{m'-2})}\right\} \leq \alpha_{m-1}^{+} - \frac{g(\alpha_{m-1}^{+})}{f_{2}(x_{m-1}^{+})}.
    \]
\item If $\alpha_{m'-1}\ge \alpha^{*}$ (i.e., $\alpha_{m-1}^{+}=\alpha_{m'-1}$), we further examine $\alpha_{m'-2}$. If $\alpha_{m'-2}< \alpha^{*}$, 
then it holds $\alpha_{m'-2}<\alpha^{*}<\alpha_{m'-1}$. 
Our update rule yields $\alpha_{m'}$ generated by \eqref{eq:alpha sequence} corresponding to \textbf{Case 1.1}. By Lemma~\ref{Lemma:case1}, we immediately get
\[
\alpha_{m}^{+}=\alpha_{m'}\le \alpha_{m'-1}-\frac{g(\alpha_{m'-1})}{f_{2}(x_{m'-1})}=\alpha_{m-1}^+-\frac{g(\alpha_{m-1}^+)}{f_{2}(x_{m-1}^{+})}.
\]
If $\alpha_{m'-2}\ge \alpha^{*}$, then $\alpha_{m-2}^{+}=\alpha_{m'-2}$.
 Hypothesis ensures 
 \[
\alpha^{*}\le\alpha_{m'-1}=\alpha_{m-1}^{+}\le \alpha_{m-2}^{+}-\frac{g(\alpha_{m-2}^{+})}{f_{2}(x_{m-2}^{+})}\le \alpha_{m-2}^{+}=\alpha_{m'-2}.
 \]\noindent According to our update rule, if $\alpha_{m-2}^{+}$ and $\alpha_{m-1}^{+}$ satisfy \eqref{eq:cond}, then $\alpha_{m'}$ generates by \eqref{eq:alpha sequence} corresponding to \textbf{Case 2.1}. By Lemma~\ref{Lemma:case2}, we get
\[
\alpha_{m}^{+}=\alpha_{m'}\le \alpha_{m'-1}-\frac{g(\alpha_{m'-1})}{f_{2}(x_{m'-1})}=\alpha_{m-1}^+-\frac{g(\alpha_{m-1}^+)}{f_{2}(x_{m-1}^{+})}.
\]
If \eqref{eq:cond} fails for $\alpha_{m-1}^{+}$ and $\alpha_{m-2}^{+}$, then our update rule gives $\alpha_{m}^{+}$ by \eqref{eq:original Dinkelbach}, i.e.,
\[
\alpha_{m}^{+}=\alpha_{m-1}^{+}-\frac{g(\alpha_{m-1}^{+})}{f_{2}(x_{m-1}^{+})}.
\] 
\end{itemize}
{\color{black}
 Thus, the proof of \eqref{eq:form Dinkelbach} is complete.
Moreover, 
 analogous to the convergence in Theorem~\ref{theorem: converge},
\eqref{eq:form Dinkelbach} yields the convergence of $\{\alpha_{p}^{+}\}$ to $\alpha^{*}$. }

Finally, we establish the convergence of $\{\alpha_{k}\}$ to $\alpha^{*}$ through the following cases.
\begin{itemize}
    \item $\{\alpha_{q}^{-}\}$ is finite. There exists an integer $N_{0}$ such that for all $k \geq N_{0}+1$,  $\alpha^{*} \leq \alpha_{k} \in \{\alpha_{p}^{+}\}$. The convergence of $\{\alpha_{k}\}$ then follows directly from that of $\{\alpha_{p}^{+}\}$.
    \item $\{\alpha_{q}^{-}\}$ is infinite. 
    Since $\{\alpha_{p}^{+}\}$ is decreasing and  convergent, for any $\epsilon>0$, we get an index $p_{0}$, such that for all $p\ge p_{0}$, it holds
\begin{equation}
\label{eq:minim}
    |\alpha_{p}^{+}-\alpha^{*}|< \min\left\{\frac{\epsilon}{\max\{M,M'\}},\epsilon\right\},
\end{equation}
    where $M$ and $M'$ are defined in \eqref{eq:M} and \eqref{eq:M'},  respectively.
    Find the index $k_{0}$ such that $\alpha_{k_{0}}=\alpha_{p_{0}}^{+}$.
    Then for all $k\ge k_{0}$, we consider the following two cases.
If $\alpha_{k}\ge \alpha^{*}$, by the monotonicity of $\{\alpha_{p}^{+}\}$ and \eqref{eq:minim}, we have
        \[
        |\alpha_{k}-\alpha^{*}|\le |\alpha_{p_{0}}^{+}-\alpha^{*}|< \min\left\{\frac{\epsilon}{\max\{M,M'\}},\epsilon\right\}\le \epsilon.
        \]     
If $\alpha_k < \alpha^*$, the update rules for $k-1 \geq k_0$ imply $\alpha^* \leq \alpha_{k-1} \leq \alpha_{k_0}$.
We obtain  a chain of inequalities:
        \begin{align}
            |\alpha_{k}-\alpha^{*}|&\le \max\{M,M'\}|\alpha_{k-1}-\alpha^{*}|\ (\text{by Lemmas~\ref{Lemma:case1} and \ref{Lemma:case2}})\nonumber\\
            &\le \max\{M,M'\}|\alpha_{k_{0}}-\alpha^{*}|\ (\text{from} \ \alpha^{*}\le\alpha_{k-1}\le \alpha_{k_{0}})\nonumber\\
            &< \max\{M,M'\}\min\left\{\frac{\epsilon}{\max\{M,M'\}},\epsilon\right\}\ (\text{using \eqref{eq:minim}})\nonumber\\
            & \le \epsilon.\nonumber     \end{align}
\end{itemize}
{\color{black}We complete the proof.}
\end{proof}

{\color{black}
Below, we present two  corollaries and a key assumption, which play an essential role in analyzing the convergence of the sequence $\{\alpha_k\}$ in the subsequent subsection.}

\begin{corollary}
\label{coro:bound}
For any initial point $x_{-1}\in \mathcal{F}$, the sequence $\{\alpha_{k}\}$ admits a uniform bound:
\begin{equation}
\alpha_{-1}\ge \alpha_{k}\ge \alpha_{\text{low}}:=\alpha^{*}-\max\{M,M'\}(\alpha_{0}-\alpha^{*}),\quad   k=-1,0,\cdots,
\end{equation}
where $M$ and $M'$ are defined in \eqref{eq:M} and \eqref{eq:M'}, respectively.
\end{corollary}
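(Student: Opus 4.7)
The plan is to establish the two bounds separately, leveraging the subsequence decomposition and the monotonicity result \eqref{eq:form Dinkelbach} developed in the proof of Theorem~\ref{theorem:accelerated convergence}, together with the error estimates in Lemmas~\ref{Lemma:case1} and \ref{Lemma:case2}. For the upper bound $\alpha_k\le\alpha_{-1}$, I will partition $\{\alpha_k\}$ into the positive subsequence $\{\alpha_p^+\}$ and the negative subsequence $\{\alpha_q^-\}$. Iterating \eqref{eq:form Dinkelbach}, together with $\alpha_2^+=\alpha_0\le \alpha_{-1}=\alpha_1^+$, shows that $\{\alpha_p^+\}$ is non-increasing, so every $\alpha_p^+\le\alpha_{-1}$. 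Each $\alpha_q^-$ satisfies $\alpha_q^- <\alpha^*\le\alpha_{-1}$ (since $\alpha_{-1}=f_1(x_{-1})/f_2(x_{-1})\ge\alpha^*$), and the upper bound follows.

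For the lower bound, I split by the sign of $g(\alpha_k)$. If $\alpha_k\ge\alpha^*$, then $\alpha_k\ge\alpha^*\ge\alpha_{\text{low}}$ trivially, since $\max\{M,M'\}(\alpha_0-\alpha^*)\ge 0$. The substantive case is $\alpha_k<\alpha^*$, i.e., $\alpha_k\in\{\alpha_q^-\}$. The case analysis in the proof of Theorem~\ref{theorem:accelerated convergence} shows that such an $\alpha_k$ can only be produced by the accelerated step \eqref{eq:alpha sequence} falling into Case~1.2 or Case~2.2, because both \eqref{eq:min tangent} and \eqref{eq:original Dinkelbach} return iterates $\ge\alpha^*$. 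Applying Lemma~\ref{Lemma:case1} or Lemma~\ref{Lemma:case2} yields
\[
\alpha^*-\alpha_k \le \max\{M,M'\}\,(\alpha_{k-1}-\alpha^*),
\]
where $\alpha_{k-1}\ge\alpha^*$ is the predecessor used in the acceleration formula.

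The remaining step is to bound $\alpha_{k-1}-\alpha^*$ by $\alpha_0-\alpha^*$. Since the while loop starts at $k=0$, the first iterate generated inside it is $\alpha_1$, so any predecessor $\alpha_{k-1}$ that is $\ge\alpha^*$ has algorithmic index $k-1\ge 0$ and therefore belongs to $\{\alpha_p^+\}$ with $p\ge 2$ (the value $\alpha_1^+=\alpha_{-1}$ sits at index $-1$, which is only used when $k=0$; in that situation the predecessor is precisely $\alpha_0=\alpha_2^+$). Monotonicity of $\{\alpha_p^+\}_{p\ge 2}$ established via \eqref{eq:form Dinkelbach} gives $\alpha_{k-1}\le\alpha_2^+=\alpha_0$, so $\alpha^*-\alpha_k\le\max\{M,M'\}(\alpha_0-\alpha^*)$, that is, $\alpha_k\ge\alpha_{\text{low}}$.

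The main obstacle, which is of bookkeeping rather than mathematical nature, is aligning the algorithmic index $k$ with the $+/-$ subsequence indices when invoking Lemmas~\ref{Lemma:case1} and \ref{Lemma:case2}, in order to guarantee that the predecessor referenced there is never $\alpha_{-1}$ but always an iterate dominated by $\alpha_0$. Once this is tracked carefully by separating the very first accelerated step (where the smaller predecessor is $\alpha_0$ itself) from later steps, the bound $\alpha_{k-1}-\alpha^*\le\alpha_0-\alpha^*$ holds uniformly, and the proof concludes.
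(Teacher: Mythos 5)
Your proposal is correct and follows essentially the same route as the paper's proof: both split according to whether $\alpha_k$ lies above or below $\alpha^*$, use the monotonicity of the positive subsequence $\{\alpha_p^+\}$ from \eqref{eq:form Dinkelbach} for the upper bound and the trivial part of the lower bound, and invoke Proposition~\ref{prop} together with the Case~1.2/2.2 estimates of Lemmas~\ref{Lemma:case1} and \ref{Lemma:case2} to control iterates that fall below $\alpha^*$. Your explicit bookkeeping showing that the predecessor $\alpha_{k-1}$ is always dominated by $\alpha_0=\alpha_2^+$ (never by $\alpha_{-1}$) is a welcome clarification of a step the paper states without elaboration, but it does not change the argument.
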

\begin{proof}
We analyze two cases based on the relation between $\alpha_k$ and $\alpha^*$.
\begin{itemize}
    \item For any index $k$ satisfying $\alpha_{k}\ge \alpha^{*}$, by the definition of $\{\alpha_p^+\}$, it follows that  $\alpha_k \ge \alpha^*$. Since $\{\alpha_p^+\}$ is a decreasing sequence, $\alpha_{-1} = \alpha_1^+$ (the initial term of $\{\alpha_p^+\}$), and the initial term of a decreasing sequence is its maximum, we can obtain:
    \begin{equation}
\nonumber
\alpha_{-1} = \alpha_1^+ \ge \alpha_k \ge \alpha^*.
\end{equation}
Moreover, given $M,M'\ge 0$ and $\alpha_{0}\ge \alpha^{*}$, we have
\begin{equation}
\label{eq: positive bound}
\alpha_{-1}\ge \alpha^{*}\ge \alpha_{k}\ge\alpha^{*}-\max\{M,M'\}(\alpha_{0}-\alpha^{*})=\alpha_{\text{low}}.
\end{equation}
\item For any $k\ge 1$ with  $\alpha_{k}<\alpha^{*}$, it holds $\alpha_{k-1}\ge \alpha^{*}$ by Proposition~\ref{prop} (no two consecutive iterates are both strictly below $\alpha^{*}$). 
According to the update rules, such $\alpha_k$ is generated by \eqref{eq:alpha sequence} and satisfies either \textbf{Case 1.2} or \textbf{Case 2.2}. By Lemma~\ref{Lemma:case1} and \ref{Lemma:case2}, we have
\begin{equation}
    \label{eq:1.22.2}
\alpha^{*}-\alpha_{k}\le \max\{M,M'\}(\alpha_{k-1}-\alpha^{*})
\end{equation}
  Given that $\alpha_{k}\le \alpha^{*}\le\alpha_{k-1}\le \alpha_{0}$, together with \eqref{eq:1.22.2}, we derive
  \begin{equation}
    \label{eq: negative bound}\alpha_{-1}\ge\alpha^{*}\ge \alpha_{k}\ge \alpha^{*}-\max\{M,M'\}(\alpha_{0}-\alpha^{*})=\alpha_{\text{low}}.
  \end{equation}
\end{itemize}
Combining \eqref{eq: positive bound} and \eqref{eq: negative bound}, we conclude that \[
\alpha_{-1}\ge \alpha_{k}\ge \alpha_{\text{low}}, \,k=0,1,\cdots,
\]
which completes the proof.
\end{proof}
\begin{assumption}
\label{assumption 1}
Let $g$  be sufficiently differentiable and nonlinear on $[\alpha_{\text{low}},\alpha_{-1}]$.
\end{assumption}
The regularity assumption is 
 imposed to derive an accurate convergence rate of \eqref{eq:alpha sequence}.
It is straightforward to satisfy the nonlinearity assumption.
If \(g\) degenerates to a linear function over \([\alpha^{*}, \alpha_{-1}]\). Our algorithm yields
\[
\alpha_{0} = \alpha_{-1} - \frac{g(\alpha_{-1})}{f_{2}(x_{-1})}=\alpha^{*}.
\]  
achieving one-step convergence. 
Here we define a key parametric
\begin{equation}
\label{eq:tau}
\tau = \max_{\omega \in [\alpha^{*}, \alpha_{-1}]} |g''(\omega)|,
\end{equation}
which is positive under Assumption~\ref{assumption 1}.
Via \eqref{eq:form Dinkelbach}, an explicit convergence rate for $\{\alpha_{p}^{+}\}$
 can be directly deduced as below.

\begin{corollary}
\label{corollary}
Under Assumption~\ref{assumption 1}, for any $p\ge 2$, it holds that 
\[
\alpha_{p}^{+}-\alpha^{*}\le \frac{\tau}{f_{2}(x^{*})}(\alpha_{p-1}^{+}-\alpha^{*})^{2},
\]
where $\tau$ is defined in \eqref{eq:tau}.
\end{corollary}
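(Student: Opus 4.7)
The plan is to combine the key inequality \eqref{eq:form Dinkelbach}, which shows that $\alpha_p^+$ is bounded above by the standard Dinkelbach iterate applied at $\alpha_{p-1}^+$, with a Taylor analysis of the single-step Dinkelbach error at a point $\alpha \ge \alpha^*$. Specifically, I will work with the quantity
\[
\alpha_{p-1}^+ - \frac{g(\alpha_{p-1}^+)}{f_2(x_{p-1}^+)} - \alpha^*
\]
and show it is at most $\tau(\alpha_{p-1}^+ - \alpha^*)^2 / f_2(x^*)$; the corollary then follows immediately from \eqref{eq:form Dinkelbach}.

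First, using $g(\alpha^*) = 0$, I would rewrite this quantity as
\[
\frac{(\alpha_{p-1}^+ - \alpha^*)\bigl(f_2(x_{p-1}^+) - \delta g(\alpha_{p-1}^+, \alpha^*)\bigr)}{f_2(x_{p-1}^+)},
\]
which is nonnegative by Lemma~\ref{lemma:subgradient_property}. Under Assumption~\ref{assumption 1}, $g$ is $C^2$ on $[\alpha_{\text{low}}, \alpha_{-1}]$, so $g$ is differentiable at $\alpha_{p-1}^+$ and hence $f_2(x_{p-1}^+) = g'(\alpha_{p-1}^+)$. I would then use two standard expansions around $\alpha^*$:
\[
g'(\alpha_{p-1}^+) = g'(\alpha^*) + g''(\eta)(\alpha_{p-1}^+ - \alpha^*), \qquad \delta g(\alpha_{p-1}^+, \alpha^*) = g'(\alpha^*) + \tfrac{1}{2}g''(\zeta)(\alpha_{p-1}^+ - \alpha^*),
\]
for suitable $\eta, \zeta \in [\alpha^*, \alpha_{p-1}^+]$, yielding
\[
f_2(x_{p-1}^+) - \delta g(\alpha_{p-1}^+, \alpha^*) = \bigl(g''(\eta) - \tfrac{1}{2}g''(\zeta)\bigr)(\alpha_{p-1}^+ - \alpha^*).
\]

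Next, I would exploit the convexity of $g$, which forces $g'' \ge 0$, so $g''(\eta) - \tfrac{1}{2}g''(\zeta) \le g''(\eta) \le \tau$ by the definition \eqref{eq:tau} of $\tau$ (and the containment $[\alpha^*, \alpha_{p-1}^+] \subseteq [\alpha^*, \alpha_{-1}]$ since $\{\alpha_p^+\}$ is nonincreasing with $\alpha_1^+ = \alpha_{-1}$). Convexity also makes $g'$ nondecreasing, so $f_2(x_{p-1}^+) = g'(\alpha_{p-1}^+) \ge g'(\alpha^*) = f_2(x^*)$, whence $1/f_2(x_{p-1}^+) \le 1/f_2(x^*)$. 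Combining these gives
\[
\alpha_{p-1}^+ - \frac{g(\alpha_{p-1}^+)}{f_2(x_{p-1}^+)} - \alpha^* \le \frac{\tau}{f_2(x^*)}(\alpha_{p-1}^+ - \alpha^*)^2,
\]
and invoking \eqref{eq:form Dinkelbach} closes the argument.

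I do not expect any serious obstacle here, since the estimate is essentially the classical quadratic-convergence bound for Dinkelbach's method; the only subtlety is confirming that the three required facts (nonnegativity of the residual from Lemma~\ref{lemma:subgradient_property}, $g'' \ge 0$ from convexity, and $\alpha_{p-1}^+ \in [\alpha^*, \alpha_{-1}]$ from the monotone decrease of $\{\alpha_p^+\}$) all apply verbatim to the subsequence $\{\alpha_p^+\}$. Once these are in place, the Taylor estimates above yield the claimed inequality cleanly.
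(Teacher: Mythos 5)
Your proposal is correct and follows essentially the same route as the paper: both start from \eqref{eq:form Dinkelbach}, rewrite the single-step Dinkelbach residual as $(\alpha_{p-1}^{+}-\alpha^{*})\bigl(f_{2}(x_{p-1}^{+})-\delta g(\alpha_{p-1}^{+},\alpha^{*})\bigr)/f_{2}(x_{p-1}^{+})$, and bound it by $\tau(\alpha_{p-1}^{+}-\alpha^{*})^{2}/f_{2}(x^{*})$ using $f_{2}(x_{p-1}^{+})\ge f_{2}(x^{*})$ and a second-derivative bound on $[\alpha^{*},\alpha_{-1}]$. The only cosmetic difference is that you control $f_{2}(x_{p-1}^{+})-\delta g(\alpha_{p-1}^{+},\alpha^{*})$ via two Taylor expansions and drop $-\tfrac{1}{2}g''(\zeta)\le 0$ by convexity, whereas the paper first applies the subgradient bound $\delta g(\alpha_{p-1}^{+},\alpha^{*})\ge f_{2}(x^{*})$ and then a single mean-value step on $g'$; both yield the identical constant.
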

\begin{proof}
    For any $p\ge 2$, we derive
    \begin{align}
    \alpha_{p}^{+}-\alpha^{*}&\le \alpha_{p-1}^{+}-\frac{g(\alpha_{p-1}^{+})}{f_{2}(x_{p-1}^{+})}-\alpha^{*}\ (\text{by}\ \eqref{eq:form Dinkelbach})\nonumber\\
    &=(\alpha_{p-1}^{+}-\alpha^{*})\left(1-\frac{\delta g(\alpha_{p-1}^{+},\alpha^{*})}{f_{2}(x_{p-1}^{+})}\right)\nonumber\\
    &\le(\alpha_{p-1}^{+}-\alpha^{*})\frac{f_{2}(x_{p-1}^{+})-f_{2}(x^{*})}{f_{2}(x_{p-1}^{+})} \ (\text{since}\ \delta g(\alpha_{p-1}^{+},\alpha^{*})\ge f_{2}(x^{*}))\nonumber\\
    &\le(\alpha_{p-1}^{+}-\alpha^{*})\frac{g'(\alpha_{p-1}^{+})-g'(\alpha^{*})}{f_{2}(x^{*})}\ (\text{since}\ f_{2}(x_{p-1}^{+})\ge f_{2}(x^{*}))\nonumber\\
    &=(\alpha_{p-1}^{+}-\alpha^{*})^{2}\frac{g''(\omega_{p-1})}{f_{2}(x^{*})}\ (\text{for some} \ \omega_{p-1}\in [\alpha^{*},\alpha_{p-1}^{+}])\nonumber
    \\
    &\le \frac{\tau}{f_{2}(x^{*})}(\alpha_{p-1}^{+}-\alpha^{*})^{2} \ (\text{by}\ \eqref{eq:tau}).\nonumber
    \end{align}
    \end{proof}
\subsection{Asymptotic Convergence Rate Analysis}
This subsection is devoted to establishing the precise convergence rates of accelerated Dinkelbach method  Algorithm~\ref{Algorithm: Accelerated Dinkelbach}.
All subsequent analysis is conducted under Assumption~\ref{assumption 1}.
Our theoretical framework develops as follows.
\begin{itemize}
    \item 
Lemma~\ref{lemma:mechanism} establishes a  sufficient condition under which \eqref{eq:cond} holds.
\item Lemmas~\ref{lemma:forgotten} and~\ref{lemma:1} characterize the convergence rates of iterative schemes, namely \eqref{eq:min tangent} and \eqref{eq:alpha sequence}, respectively.
\item Corollary~\ref{coro_1} determines the position of $\alpha_{k+1}$ generated by \eqref{eq:alpha sequence}.
\item 
Theorem~\ref{Theorem_final} synthesizes the aforementioned  results to establish the precise asymptotic convergence rates of the algorithm.
\end{itemize}

\begin{lemma}
\label{lemma:mechanism}
    For any predefined $\rho>1$, there exists a $k_{\rho}$ such that for all $k\ge k_{\rho}$, if $\alpha^{*}<\alpha_{k}<\alpha_{k-1}$, then \eqref{eq:cond} holds.
\end{lemma}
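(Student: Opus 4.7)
My plan is to establish the lemma by recasting condition~\eqref{eq:cond} into an equivalent ratio form and then showing that one side diverges to infinity while the other converges to $\rho$ as $k\to\infty$. Under the hypothesis $\alpha^{*}<\alpha_{k}<\alpha_{k-1}$, both sides of \eqref{eq:cond} are negative (since $\alpha_{k}-\alpha_{k-1}<0$ and $g(\alpha_{k})-g(\alpha_{k-1})<0$ while $g(\alpha_{k-1}),g(\alpha_{k})>0$). Dividing through by the negative quantity $\alpha_{k}-\alpha_{k-1}$ and reversing the inequality, \eqref{eq:cond} becomes equivalent to
\[
\frac{g(\alpha_{k-1})}{g(\alpha_{k})} \;\ge\; \rho\,\frac{\delta g(\alpha_{k},\alpha_{k-1})}{f_{2}(x_{k})}.
\]
It therefore suffices to verify this reformulated inequality for all $k$ sufficiently large satisfying the hypothesis.

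The right-hand side can be shown to converge to $\rho$. By Theorem~\ref{theorem:accelerated convergence}, $\alpha_{k}\to\alpha^{*}$, and hence $\alpha_{k-1}\to\alpha^{*}$ as well. Under Assumption~\ref{assumption 1}, $g\in C^{2}$ on a neighborhood of $\alpha^{*}$; since the identity $g'(\alpha_{k})=f_{2}(x_{k})$ holds wherever $g$ is differentiable, the continuity of $g'$ at $\alpha^{*}$ yields $f_{2}(x_{k})\to g'(\alpha^{*})=f_{2}(x^{*})$, and a first-order Taylor expansion of $g$ gives $\delta g(\alpha_{k},\alpha_{k-1})\to g'(\alpha^{*})$. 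Consequently $\delta g(\alpha_{k},\alpha_{k-1})/f_{2}(x_{k})\to 1$, so the right-hand side tends to $\rho$.

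For the left-hand side, I will prove divergence to $+\infty$. The key observation is that, since $\alpha_{k-1}$ and $\alpha_{k}$ are consecutive iterates of $\{\alpha_{k}\}$ and both lie in the non-negative-value subsequence $\{\alpha_{p}^{+}\}$, they are in fact consecutive terms of $\{\alpha_{p}^{+}\}$. Corollary~\ref{corollary} then delivers the quadratic bound
\[
\alpha_{k}-\alpha^{*} \;\le\; \frac{\tau}{f_{2}(x^{*})}(\alpha_{k-1}-\alpha^{*})^{2}.
\]
Combined with the Taylor expansion $g(\alpha)=g'(\alpha^{*})(\alpha-\alpha^{*})(1+o(1))$ near $\alpha^{*}$, this yields
\[
\frac{g(\alpha_{k-1})}{g(\alpha_{k})} \;=\; \frac{\alpha_{k-1}-\alpha^{*}}{\alpha_{k}-\alpha^{*}}\,(1+o(1)) \;\ge\; \frac{f_{2}(x^{*})}{\tau(\alpha_{k-1}-\alpha^{*})}\,(1+o(1)),
\]
which tends to $+\infty$ since $\alpha_{k-1}\to\alpha^{*}$.

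Putting the two asymptotic statements together, for any fixed $\rho>1$ there exists an index $k_{\rho}$ such that the reformulated inequality, and hence \eqref{eq:cond} itself, holds for every $k\ge k_{\rho}$ satisfying the hypothesis. The main obstacle I anticipate is the identification of $\alpha_{k-1}$ and $\alpha_{k}$ as consecutive elements of $\{\alpha_{p}^{+}\}$, which is precisely what unlocks the quadratic-rate bound of Corollary~\ref{corollary}; fortunately this identification follows immediately from the definition of $\{\alpha_{p}^{+}\}$ together with the hypothesis that both iterates lie above $\alpha^{*}$, so no serious technical difficulty remains beyond the routine Taylor estimates enabled by Assumption~\ref{assumption 1}.
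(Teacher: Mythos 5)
Your proof is correct and rests on the same key ingredient as the paper's: the quadratic contraction of Corollary~\ref{corollary}, applied after identifying $\alpha_{k-1},\alpha_{k}$ as consecutive members of $\{\alpha_{p}^{+}\}$, forces the ratio $(\alpha_{k}-\alpha^{*})/(\alpha_{k-1}-\alpha^{*})$ to vanish, which is exactly what makes \eqref{eq:cond} hold for large $k$. The only difference is presentational: the paper fixes an explicit threshold $k_{\rho}$ via $0\le\alpha_{k_{\rho}}-\alpha^{*}\le f_{2}(x^{*})^{2}/(\rho\tau f_{2}(x_{-1}))$ and then chains the subgradient bounds of Lemma~\ref{lemma:subgradient_property} back into \eqref{eq:cond}, whereas you recast \eqref{eq:cond} as $g(\alpha_{k-1})/g(\alpha_{k})\ge\rho\,\delta g(\alpha_{k},\alpha_{k-1})/f_{2}(x_{k})$ and argue asymptotically (left side diverges, right side tends to $\rho$), which is equally valid under Assumption~\ref{assumption 1} but yields a non-constructive $k_{\rho}$.
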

\begin{proof}
    Since $\{\alpha_{p}^{+}\}$ converges to $\alpha^{*}$, there exists a $p_{\rho}$ and a corresponding
$k_{\rho}$ such that 
\begin{equation}
\label{eq:2002}
0\le \alpha_{k_{\rho}}-\alpha^{*}=\alpha_{p_{\rho}}^{+}-\alpha^{*}\leq \frac{f_{2}(x^{*})^{2}}{\rho\tau f_{2}(x_{-1})}, 
\end{equation}
where $\tau$ is defined in \eqref{eq:tau}. The fraction 
 is well-defined since $\rho>1$, $\tau>0$ (guaranteed by Assumption~\ref{assumption 1}) and $f_{2}>0$.
For any $k > k_{\rho}$ with $\alpha_{k-1} > \alpha_{k} > \alpha^{*}$, there exists an index $p'$ satisfying $\alpha_{p'}^{+}=\alpha_{k}$ and $ \alpha_{p'-1}^{+}=\alpha_{k-1}$. 
We establish the following chain of inequalities:
\begin{align}
\frac{\alpha_{k} - \alpha^{*}}{\alpha_{k-1} - \alpha^{*}}=\frac{\alpha_{p'}^+ - \alpha^{*}}{\alpha_{p'-1}^+ - \alpha^{*}} &\leq \frac{\tau}{f_{2}(x^{*})}(\alpha_{p'-1}^+ - \alpha^{*}) \ (\text{by Corollary~\ref{corollary}})  \nonumber\\&\le \frac{\tau}{f_{2}(x^{*})}(\alpha_{k_{\rho}} - \alpha^{*}) \  (\text{from the monotonicity of}\ \{\alpha_{p}^{+}\} )  \nonumber\\&\le \frac{\tau}{f_{2}(x^{*})}\frac{f_{2}(x^{*})^{2}}{\rho\tau f_{2}(x_{-1})}\ (\text{by}\ \eqref{eq:2002} )  \nonumber\\
&=\frac{f_{2}(x^{*})}{\rho f_{2}(x_{-1})}\nonumber\\
&\le \frac{f_{2}(x^{*})}{\rho f_{2}(x_{k-1})}\frac{f_{2}(x_{k})}{f_{2}(x_{k})}\ (\text{since} \ f_{2}(x_{-1})\ge f_{2}(x_{k-1})) \nonumber\\
&\le \frac{\delta g(\alpha_{k-1},\alpha^{*})}{\rho\delta g(\alpha_{k},\alpha_{k-1})}\frac{f_{2}(x_{k})}{\delta g(\alpha_{k},\alpha^{*})}.\label{eq::1}
\end{align}
The last inequality follows from Lemma~\ref{lemma:subgradient_property}, which gives 
\[
 f_2(x^*)\le \delta g(\alpha_{k-1}, \alpha^*), \quad f_2(x_{k-1})\ge \delta g(\alpha_{k-1}, \alpha_k), \quad
  f_2(x_k)\ge \delta g(\alpha_k, \alpha^*)
.
\]
Since $\delta g>0$  by Corollary~\ref{coro_supply},  $\rho>0$ and $\alpha^{*}<\alpha_{k}<\alpha_{k-1}$, \eqref{eq::1} is equivalent to
\begin{equation}
    \label{eq__1}
\rho(\alpha_{k}-\alpha^{*})\delta g(\alpha_{k},\alpha_{k-1})\delta g(\alpha_{k},\alpha^{*})\le \delta g(\alpha_{k-1}-\alpha^{*})f_{2}(x_{k})(\alpha_{k-1}-\alpha^{*}).
\end{equation}
 Multiplying both sides of \eqref{eq__1} by  $\alpha_{k}-\alpha_{k-1}$ ($\le 0$),  results in 
\[
\rho(g(\alpha_{k})-g(\alpha_{k-1}))(g(\alpha_{k})-g(\alpha^{*}))\ge (g(\alpha_{k-1})-g(\alpha^{*}))f_{2}(x_{k})(\alpha_{k}-\alpha_{k-1})
\]
By $g(\alpha^*) = 0$, the above is further equivalent to \eqref{eq:cond}, 
which completes the proof.
\end{proof}
We now examine the convergence rates of the iteration scheme~\eqref{eq:min tangent} and~\eqref{eq:alpha sequence} and study the propagation behavior of subsequent iterates.
\begin{lemma} 
\label{lemma:forgotten}
The iteration scheme \eqref{eq:min tangent} achieves a convergence rate of at least 
2. Specifically, for 
$k$ such that 
$\alpha_{k+2}$ is generated via \eqref{eq:min tangent}, the inequality
\begin{equation}
|\alpha_{k+2}-\alpha^{*}|\le C_{1}|\alpha_{k+1}-\alpha^{*}|^{2}
\end{equation} 
holds, where $C_{1}$ denotes a positive constant.
\end{lemma}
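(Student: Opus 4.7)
My plan is to exploit the fact that \eqref{eq:min tangent} is triggered only when $g(\alpha_{k+1}) < 0$, i.e., when $\alpha_{k+1} < \alpha^{*}$. Combined with the algorithm's update logic and Proposition~\ref{prop}, the preceding iterate satisfies $\alpha_k \geq \alpha^{*}$, so we are in the bracketing configuration $\alpha_{k+1} < \alpha^{*} \leq \alpha_k$. The first step is to sandwich $\alpha_{k+2}$: by applying the subgradient inequality $g(\alpha^{*}) \geq g(\alpha_{k+1}) + f_2(x_{k+1})(\alpha^{*} - \alpha_{k+1})$ (and analogously at $\alpha_k$), exactly as in the proof of Lemma~\ref{lemma:inequ_1}, I get $\alpha^{*} \leq \alpha_{k+2}$. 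The definition of $\alpha_{k+2}$ as a minimum immediately yields $\alpha_{k+2} \leq \alpha_{k+1} - g(\alpha_{k+1})/f_2(x_{k+1})$.

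The core estimate is then to control this upper bound. Using $g(\alpha^{*}) = 0$, I rewrite the Newton step from below as
\[
\alpha_{k+1} - \frac{g(\alpha_{k+1})}{f_2(x_{k+1})} - \alpha^{*} = (\alpha^{*} - \alpha_{k+1}) \cdot \frac{\delta g(\alpha_{k+1}, \alpha^{*}) - f_2(x_{k+1})}{f_2(x_{k+1})}.
\]
By Lemma~\ref{lemma:subgradient_property} applied to the pair $\alpha_{k+1} < \alpha^{*}$, both $\delta g(\alpha_{k+1},\alpha^{*})$ and $f_2(x_{k+1})$ are bounded above by $f_2(x^{*})$, and under Assumption~\ref{assumption 1} the latter equals $g'(\alpha_{k+1})$. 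Hence
\[
\delta g(\alpha_{k+1}, \alpha^{*}) - f_2(x_{k+1}) \leq f_2(x^{*}) - f_2(x_{k+1}) = g'(\alpha^{*}) - g'(\alpha_{k+1}).
\]
A mean-value/Taylor argument then gives $g'(\alpha^{*}) - g'(\alpha_{k+1}) = g''(\omega)(\alpha^{*} - \alpha_{k+1})$ for some $\omega \in [\alpha_{k+1}, \alpha^{*}]$.

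Finally, to convert this into a uniform quadratic bound, I invoke Corollary~\ref{coro:bound}, which guarantees $\omega \in [\alpha_{\mathrm{low}}, \alpha_{-1}]$, together with the definition \eqref{eq:tau} of $\tau$ to get $|g''(\omega)| \leq \tau$, and the continuity-plus-compactness bound $f_2(x_{k+1}) \geq \min_{\xi \in \mathcal{F}} f_2(\xi) > 0$. Chaining these inequalities yields
\[
0 \leq \alpha_{k+2} - \alpha^{*} \leq \frac{\tau}{\min_{\xi\in\mathcal{F}} f_2(\xi)}\,(\alpha^{*} - \alpha_{k+1})^{2},
\]
so the conclusion holds with $C_1 := \tau/\min_{\xi\in\mathcal{F}} f_2(\xi)$.

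There is no deep obstacle here; the argument is essentially the well-known quadratic-convergence estimate for Newton's method transported to this one-sided setting. The main care required is bookkeeping: confirming that $\alpha_k \geq \alpha^{*}$ holds in the situation where \eqref{eq:min tangent} activates (so that $\alpha_{k+2} \geq \alpha^{*}$), and ensuring that all constants ($\tau$, the lower bound on $f_2$) are $k$-independent, which is precisely the role played by Corollary~\ref{coro:bound} together with Assumption~\ref{assumption 1}.
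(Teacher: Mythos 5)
Your argument is essentially identical to the paper's: both reduce $\alpha_{k+2}-\alpha^{*}$ to the Newton step at $\alpha_{k+1}$ via the $\min$, rewrite it with the divided difference, bound $\delta g(\alpha_{k+1},\alpha^{*})-f_{2}(x_{k+1})\le f_{2}(x^{*})-f_{2}(x_{k+1})=g'(\alpha^{*})-g'(\alpha_{k+1})$, and finish with a mean-value estimate on $g''$. The one slip is in your final constant: the point $\omega$ lies in $[\alpha_{k+1},\alpha^{*}]$ with $\alpha_{k+1}<\alpha^{*}$, so $|g''(\omega)|\le\tau$ does \emph{not} follow from the definition \eqref{eq:tau}, which takes the maximum only over $[\alpha^{*},\alpha_{-1}]$; you correctly note via Corollary~\ref{coro:bound} that $\omega\in[\alpha_{\text{low}},\alpha_{-1}]$, so the fix is simply to set $C_{1}=\max_{\xi\in[\alpha_{\text{low}},\alpha_{-1}]}|g''(\xi)|/\min_{\xi\in\mathcal{F}}f_{2}(\xi)$, which is exactly the constant the paper uses.
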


\begin{proof}
    By Proposition~\ref{prop}, 
generating of $\alpha_{k+2}$ via \eqref{eq:min tangent} implies $\alpha_{k+1} \leq \alpha^{*}$ and $\alpha_{k+2} \geq \alpha^{*}$. We derive the following chain of inequalities:
\begin{align}
\alpha_{k+2}-\alpha^{*} &= \min\left\{\alpha_{k+1}-\frac{g(\alpha_{k+1})}{f_{2}(x_{k+1})}, \alpha_{k}-\frac{g(\alpha_{k})}{f_{2}(x_{k})}\right\} -\alpha^{*}\nonumber \\
&\leq \alpha_{k+1}-\frac{g(\alpha_{k+1})}{f_{2}(x_{k+1})} {\color{black}-\alpha^{*}}\nonumber \\
&= (\alpha^{*}-\alpha_{k+1})\left(\frac{\delta g(\alpha_{k+1},\alpha^{*})-f_{2}(x_{k+1})}{f_{2}(x_{k+1})}\right) \nonumber \\
&\leq \frac{\max_{\xi \in [\alpha_{k+1},\alpha^{*}]}|g''(\xi)|}{\min_{x \in \mathcal{F}}f_{2}(x)}(\alpha_{k+1}-\alpha^{*})^{2}\label{eq:63}\\
&\le \frac{\max_{\xi \in [\alpha_{\text{low}},\alpha^{-1}]}|g''(\xi)|}{\min_{x \in \mathcal{F}}f_{2}(x)}(\alpha_{k+1}-\alpha^{*})^{2}, \nonumber\\
&=C_{1}(\alpha_{k+1}-\alpha^{*})^{2}\ ( C_{1} \triangleq \frac{\max_{\xi \in [\alpha_{\text{low}},\alpha^{-1}]}|g''(\xi)|}{\min_{x \in \mathcal{F}}f_{2}(x)}).\label{eq_C_1}
\end{align}
Here, the inequality \eqref{eq:63} follows from  $\delta g(\alpha_{k+1},\alpha^{*}) \leq f_{2}(x^{*})$ and the twice continuous differentiability of $g(\alpha)$. 
Finally, \eqref{eq_C_1} concludes the proof.
\end{proof}
\begin{lemma}\label{lemma:1}
    {\color{black}
    Consider the characterization parameter defined by}
\begin{equation}\label{eq:chi}
\chi = 3g''^{2}(\alpha^{*}) - 2g'(\alpha^{*})g'''(\alpha^{*}).
\end{equation}
{\color{black}
For sufficiently large $k$, 
the convergence behavior of iteration~\eqref{eq:alpha sequence} depends on \(\chi\) in the following way.}
\begin{itemize}
    \item If $\chi \neq 0$, then  \eqref{eq:alpha sequence} achieves super-quadratic convergence:
    \begin{equation}
    |\alpha_{k+1} - \alpha^{*}| \le C_2 |\alpha_{k} - \alpha^{*}|^{2} |\alpha_{k-1} - \alpha^{*}|,
    \end{equation}
    where $C_2 > 0$ is a constant independent of $k$.
    
    \item If $\chi = 0$, then  \eqref{eq:alpha sequence} attains a higher-order super-quadratic convergence:
    \begin{equation}
    |\alpha_{k+1} - \alpha^{*}| \le C_3 |\alpha_{k} - \alpha^{*}|^{2} |\alpha_{k-1} - \alpha^{*}|^{2},
    \end{equation}
    where $C_3 > 0$ is a constant independent of $k$.
\end{itemize}
\end{lemma}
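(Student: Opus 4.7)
The plan is to perform a Taylor expansion of the error recursion at $\alpha^*$. Writing $e_k := \alpha_k - \alpha^*$, the identity \eqref{eq:simpler} expresses the error as $e_{k+1} = e_k \cdot N/D$, where
\[
N = \delta g(\alpha_{k-1},\alpha^*)\, f_2(x_k) - \delta g(\alpha_k,\alpha^*)\, \delta g(\alpha_k,\alpha_{k-1}),
\]
and $D = N + \delta g(\alpha_k,\alpha^*)\, \delta g(\alpha_k,\alpha_{k-1})\cdot (e_{k-1}-e_k)/e_{k-1}$. Under Assumption~\ref{assumption 1}, I would Taylor-expand $\delta g(\alpha,\alpha^*)$, $\delta g(\alpha,\beta)$, and $f_2(x_k)=g'(\alpha_k)$ about $\alpha^*$ in terms of $a=g'(\alpha^*)$, $b=g''(\alpha^*)$, $c=g'''(\alpha^*)$, $d=g^{(4)}(\alpha^*)$.

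The core step is to collect terms in $N$. Observe first that $N$ vanishes identically when $e_k=0$ (then $\delta g(\alpha_k,\alpha_{k-1}) = \delta g(\alpha_{k-1},\alpha^*)$ and $f_2(x_k)=g'(\alpha^*)=\delta g(\alpha_k,\alpha^*)$), so $N$ contains $e_k$ as an analytic factor. Expanding the two products and subtracting, the constant, linear, and $e_{k-1}^2$ coefficients all cancel; collecting the $e_k^2$ and $e_k e_{k-1}$ coefficients and then the order three contributions yields
\[
N = \frac{\chi}{12}\, e_k(e_{k-1}-e_k) + \frac{\psi}{24}\, e_k(e_{k-1}-e_k)(2e_k+e_{k-1}) + O\!\left(|e_k|\,\varepsilon^{3}\right),
\]
where $\varepsilon := \max(|e_k|,|e_{k-1}|)$ and $\psi := 2bc-ad$. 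The combination $\chi = 3b^2 - 2ac$ emerges as the difference between the $e_k e_{k-1}$ coefficient $b^2/2$ in $\delta g(\alpha_{k-1},\alpha^*)f_2(x_k)$ and the coefficient $b^2/4+ac/6$ in $\delta g(\alpha_k,\alpha^*)\delta g(\alpha_k,\alpha_{k-1})$. The factorization $-2e_k^2+e_ke_{k-1}+e_{k-1}^2 = (e_{k-1}-e_k)(2e_k+e_{k-1})$ is what exposes an extra $(e_{k-1}-e_k)$ factor in the cubic contribution, the key structural feature for the $\chi=0$ case.

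Next I would bound $|D|$ below by a positive constant for large $k$. In \textbf{Case 1} ($\alpha_{k-1}<\alpha^*<\alpha_k$), the ratio $e_k/e_{k-1}$ is negative, so $D \geq \delta g(\alpha_{k-1},\alpha^*)f_2(x_k) \to a^2>0$. In \textbf{Case 2} ($\alpha^*<\alpha_k<\alpha_{k-1}$), the proof of Lemma~\ref{lemma:mechanism} already gives $e_k/e_{k-1}\le f_2(x^*)/(\rho f_2(x_{-1}))<1$, so $(e_{k-1}-e_k)/e_{k-1}$ stays bounded away from $0$ and the dominant piece $\delta g(\alpha_k,\alpha^*)\delta g(\alpha_k,\alpha_{k-1})\cdot(e_{k-1}-e_k)/e_{k-1}$ of $D$ admits a uniform positive lower bound. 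I would also record $|e_k|\le|e_{k-1}|$ for large $k$ in both cases: trivially in Case 2; in Case 1, Proposition~\ref{prop} forces $\alpha_k$ to have been generated by the quadratic step \eqref{eq:min tangent}, so Lemma~\ref{lemma:forgotten} gives $|e_k|\le C_1|e_{k-1}|^2$.

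Finally, using $|e_{k-1}-e_k|\le 2|e_{k-1}|$ and $|2e_k+e_{k-1}|\le 3|e_{k-1}|$ from the previous step, the expansion yields $|N|\le C|e_k||e_{k-1}|$ if $\chi\neq 0$ and $|N|\le C|e_k||e_{k-1}|^2$ if $\chi=0$. Multiplying by $|e_k|/|D|$ then produces $|e_{k+1}|\le C_2|e_k|^2|e_{k-1}|$ and $|e_{k+1}|\le C_3|e_k|^2|e_{k-1}|^2$, respectively. The main obstacle is the Taylor bookkeeping in the second step: tracking the cancellations between the two products through order three, verifying that the quadratic coefficient is exactly $\chi/12\cdot(e_{k-1}-e_k)$, and confirming that the cubic remainder retains the $(e_{k-1}-e_k)$ factor (via the algebraic factorization above) so that the quadratic improvement in $e_{k-1}$ is genuine when $\chi=0$.
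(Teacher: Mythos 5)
Your proposal is correct, and while it rests on the same underlying idea as the paper's proof --- Taylor-expand the error recursion \eqref{eq:simpler} about $\alpha^*$ and observe that $\chi$ is the leading coefficient --- the execution is genuinely different in two respects. First, where the paper imports the third-order expansion \eqref{eq_1+} from Fern\'andez-Torres and only then rearranges it into \eqref{eq_P}, you compute the numerator $N$ from scratch; your coefficients check out: the degree-two part of $N$ is exactly $\tfrac{\chi}{12}e_k(e_{k-1}-e_k)$, and the degree-three part is $\tfrac{1}{24}\bigl(2g''(\alpha^*)g'''(\alpha^*)-g'(\alpha^*)g^{(4)}(\alpha^*)\bigr)e_k\bigl(-2e_k^2+e_ke_{k-1}+e_{k-1}^2\bigr)$, whose factorization as $e_k(e_{k-1}-e_k)(2e_k+e_{k-1})$ (with the $e_{k-1}^3$ coefficient cancelling) is precisely what you claim. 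Second, and more substantively, your handling of $\chi=0$ replaces the paper's abstract double-series argument --- expanding $P_k$ as $\sum\mu_{m,n}e_{k-1}^{m}e_k^{n}$, selecting the pair minimizing $m+2n$, and converting powers of $e_k$ into powers of $e_{k-1}^{2}$ via Corollary~\ref{corollary} and Lemma~\ref{lemma:forgotten} --- by the explicit factorization above, which exposes the extra $O(e_{k-1}^2)$ factor directly and needs only the weaker fact $|e_k|\le|e_{k-1}|$, which you justify correctly in both positional configurations. Your direct lower bound on the denominator $D$ (positivity of the subtracted term in \textbf{Case 1}; the ratio bound $e_k/e_{k-1}\le f_2(x^*)/(\rho f_2(x_{-1}))$ from the proof of Lemma~\ref{lemma:mechanism} in \textbf{Case 2}) is also sound and makes the argument self-contained. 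The one point that must not be glossed over in a final write-up is the observation you already make --- that $N$, as a smooth function of $(e_{k-1},e_k)$, vanishes identically on $\{e_k=0\}$, so the post-cubic remainder is genuinely $O(|e_k|\,\varepsilon^{3})$ and not merely $O(\varepsilon^{4})$ --- since this is what allows the $\chi=0$ bound to close.
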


\begin{proof}
Following the methodology in \cite{FernandezTorres2015}, we expand \eqref{eq:alpha sequence} via third-order Taylor series around the root $\alpha^{*}$, yielding
\begin{equation}
\label{eq_1+}
\alpha_{k+1} - \alpha^{*}= (\alpha_{k} - \alpha^{*})^{2} (\alpha_{k-1} - \alpha^{*}) \frac{g''(\alpha^{*}) - \frac{2}{3}g'''(\xi_{k})g'(\alpha^{*}) + O(|\alpha_{k-1}-\alpha^{*}|)}{4g'^{2}(\alpha^{*}) + O(|\alpha_{k-1}-\alpha^{*}|)},
\end{equation}
{\color{black}
where $\xi_{k}$ lies between $\alpha^{*}$ and $\alpha_{k-1}$,  as defined by the Taylor expansion:}
\begin{align}
g(\alpha_{k-1}) &= g'(\alpha^{*})(\alpha_{k-1}-\alpha^{*}) + \frac{g''(\alpha^{*})}{2}(\alpha_{k-1}-\alpha^{*})^{2} + \frac{g'''(\xi_{k})}{6}(\alpha_{k-1}-\alpha^{*})^{3}.\nonumber
\end{align} 
Rewrite \eqref{eq_1+} as
\begin{equation}
\label{eq_Pk}
\alpha_{k+1}-\alpha^{*}=P_{k}(\alpha_{k}-\alpha^{*})^{2}(\alpha_{k-1}-\alpha^{*}),
\end{equation}
where \( P_k \) is an expression depending on \( \alpha_k \) and \( \alpha_{k-1} \), i.e., it takes the form given in: 
\begin{align}
    P_{k}&= \frac{g''(\alpha^{*}) - \frac{2}{3}g'''(\xi_{k})g'(\alpha^{*}) + O(|\alpha_{k-1}-\alpha^{*}|)}{4g'^{2}(\alpha^{*}) + O(|\alpha_{k-1}-\alpha^{*}|)}\nonumber\\
&=\frac{3g''(\alpha^{*})-2g'''(\alpha^{*})g'(\alpha^{*})+ 2(g'''(\alpha^{*})-g'''(\xi_{k}))g'(\alpha^{*}) + O(|\alpha_{k-1}-\alpha^{*}|)}{12g'^{2}(\alpha^{*}) + O(|\alpha_{k-1}-\alpha^{*}|)}\nonumber\\
    &=\frac{\chi+ 2(g'''(\alpha^{*})-g'''(\xi_{k}))g'(\alpha^{*}) + O(|\alpha_{k-1}-\alpha^{*}|)}{12g'^{2}(\alpha^{*}) + O(|\alpha_{k-1}-\alpha^{*}|)}.\label{eq_P}
\end{align}
By Assumption~\ref{assumption 1}, we derive the upper estimate for $|g'''(\alpha^{*})-g'''(\xi_{k})|$:
\begin{equation}
\label{eq_1}
    |g'''(\alpha^{*})-g'''(\xi_{k})|\le \max_{\zeta \in[\alpha_{\text{low}},\alpha_{-1}]}g^{(4)}(\zeta)|\alpha_{k-1}-\alpha^{*}|=O(|\alpha_{k-1}-\alpha^{*}|).
\end{equation}
Thus, 
If $\chi\neq0$,  we obtain
\begin{align}
|P_{k}|&=\left|\frac{\chi+ 2(g'''(\alpha^{*})-g'''(\xi_{k}))g'(\alpha^{*}) + O(|\alpha_{k-1}-\alpha^{*}|)}{12g'^{2}(\alpha^{*}) + O(|\alpha_{k-1}-\alpha^{*}|)}\right|\nonumber\\
&\le \frac{|\chi|+ |O(|\alpha_{k-1}-\alpha^{*}|)|}{12g'^{2}(\alpha^{*}) - |O(|\alpha_{k-1}-\alpha^{*}|)|}\, (\text{by}\ \eqref{eq_1})\label{eq__-}\\
&\le\frac{|\chi|}{3g'^{2}(\alpha^{*})}\label{eq_taking2}.
\end{align}
We now prove the last inequality in \eqref{eq_taking2}. 
By the definition of big-O notation, there exists a constant $d>0$, such that 
\begin{equation}
    \label{eq__O}
|O(|\alpha_{k-1}-\alpha^{*}|)|\le d|\alpha_{k-1}-\alpha^{*}|.
\end{equation}
Since \(\alpha_k \to \alpha^*\), \(|\alpha_{k-1} - \alpha^*| \to 0\) as \(k \to \infty\), 
so for  sufficiently large $k$,
\begin{equation}\label{eq__O1}|\alpha_{k-1}-\alpha^{*}|<\frac{\min\{|\chi|,6g'^{2}(\alpha^{*})\}}{d}.
\end{equation}
Substituting \eqref{eq__O1} into \eqref{eq__O} gives
\[
|O(|\alpha_{k-1}-\alpha^{*}|)|<\min\{|\chi|,6g'^{2}(\alpha^{*})\}. 
\]
Thus, the numerator in \eqref{eq__-} satisfies $|\chi|+ |O(|\alpha_{k-1}-\alpha^{*}|)|<|\chi|+|\chi|=2|\chi|$, and the denominator satisfies $12g'^{2}(\alpha^{*}) - |O(|\alpha_{k-1}-\alpha^{*}|)|>12g'^{2}(\alpha^{*})-6g'^{2}(\alpha^{*})=6g'^{2}(\alpha^{*})$.
 Combining these bounds, we have 
\[ \frac{|\chi| + |O(|\alpha_{k-1} - \alpha^*|)|}{12g'^2(\alpha^*) - |O(|\alpha_{k-1} - \alpha^*|)|} < \frac{2|\chi|}{6g'^2(\alpha^*)} = \frac{|\chi|}{3g'^2(\alpha^*)},  \] which confirms that \eqref{eq_taking2} holds.
Substituting \eqref{eq_taking2} into \eqref{eq_Pk} derives
\begin{equation}
    \label{eq_chi_0_P_k}
    |\alpha_{k+1}-\alpha^{*}|\le \frac{|\chi|}{3g'^{2}(\alpha^{*})}|\alpha_{k}-\alpha^{*}|^{2}|\alpha_{k-1}-\alpha^{*}|.
\end{equation}
 
When \(\chi = 0\), we perform a higher-order Taylor expansion of \( g(\alpha_k) \) and \( g(\alpha_{k-1}) \) around \(\alpha^*\) in the numerator of \( P_k \) (which depends on \(\alpha_{k-1}\) and \(\alpha_k\)). 
{\color{black} This yields:
}
\[
P_k = \frac{\sum_{m=0}^N \sum_{n=0}^N \mu_{m,n} \left( \alpha_{k-1} - \alpha^* \right)^m \left( \alpha_k - \alpha^* \right)^n + o\left( \left| \alpha_{k-1} - \alpha^* \right|^N \left| \alpha_k - \alpha^* \right|^N \right)}{12 g'^2(\alpha^*) + O\left( \left| \alpha_{k-1} - \alpha^* \right| \right)}, \tag{79}
\]
where \(N \in \mathbb{N}^+\) is the expansion order, 
 \(\mu_{m,n} \in \mathbb{R}\) are coefficients determined by the derivatives of the numerator's underlying function at \((\alpha^*,\alpha^*)\) (independent of $k$), and 
\(o\left( \left| \alpha_{k-1} - \alpha^* \right|^N \left| \alpha_k - \alpha^* \right|^N \right)\) denotes the higher-order remainder term 
satisfying \(\frac{o\left( \left| \alpha_{k-1} - \alpha^* \right|^N \left| \alpha_k - \alpha^* \right|^N \right)}{\left| \alpha_{k-1} - \alpha^* \right|^N \left| \alpha_k - \alpha^* \right|^N} \to 0\) as \((\alpha_{k-1}, \alpha_k) \to (\alpha^*, \alpha^*)\).
From \eqref{eq_P}, and the condition $\chi=0$, we have $\mu_{0,0}=0$. 
Let \((m^*, n^*)\) be the lexicographically smallest pair in \([0, N] \times [0, N]\) such that \(\mu_{m,n} \neq 0\) and \(m + 2n\) is minimized.
 If no such pair exists for any sufficiently large \(N\), then \(P_k = 0\) and consequently \(\alpha_{k+1} = \alpha^*\), which trivially satisfies the lemma.

We now establish an upper bound for $P_{k}$:
\begin{align}
    |P_{k}|&=\left|\frac{ \sum_{m=0}^{N}\sum_{n=0}^{N} \mu_{m,n} \left( \alpha_{k-1} - \alpha^* \right)^m \left( \alpha_k - \alpha^* \right)^n+o\bigl(|\alpha_{k-1}-\alpha^{*}|^{N}|\alpha_{k}-\alpha^{*}|^{N}\bigr)}{12 \, g'^2(\alpha^*) + O\bigl( |\alpha_{k-1} - \alpha^*| \bigr)}\right|\nonumber\\
    &\le \frac{\sum_{m=0}^{N}\sum_{n=0}^{N} |\mu_{m,n}| \left| \alpha_{k-1} - \alpha^* \right|^m \left| \alpha_k - \alpha^* \right|^n}{6 \, g'^2(\alpha^*)}\label{eq_sum2}\\
    &\le \frac{\sum_{m=0}^{N}\sum_{n=0}^{N} |\mu_{m,n}| \left| \alpha_{k-1} - \alpha^* \right|^m \text{max}\{C_{1},\frac{\tau}{f_{2}(x^{*})}\}\left| \alpha_{k-1} - \alpha^* \right|^{2n}}{6 \, g'^2(\alpha^*)}\label{eq_sum3}\\ 
&=\text{max}\{C_{1},\frac{\tau}{f_{2}(x^{*})}\}\frac{\sum_{m=0}^{N}\sum_{n=0}^{N} |\mu_{m,n}| \left| \alpha_{k-1} - \alpha^* \right|^{m+2n}}{6 \, g'^2(\alpha^*)}\label{eq_=}\\
&\le \frac{\text{max}\{C_{1},\frac{\tau}{f_{2}(x^{*})}\}|\mu_{m^{*},n^{*}}| \left| \alpha_{k-1} - \alpha^* \right|^{m^{*}+2n^{*}}}{3g'(\alpha^{*})}\label{eq_mn*}\\
&\le \frac{\text{max}\{C_{1},\frac{\tau}{f_{2}(x^{*})}\}|\mu_{m^{*},n^{*}}| \left| \alpha_{k-1} - \alpha^* \right|}{3g'(\alpha^{*})}\ (\text{using } m^{*}+2n^{*}\ge 1),\label{eq_nosum}
\end{align}
where \eqref{eq_sum2} is derived by analogous reasoning to that in \eqref{eq_taking2},  and \eqref{eq_sum3} uses Corollary~\ref{corollary} and Lemma~\ref{lemma:forgotten}  under the two cases ($\alpha^{*} < \alpha_k < \alpha_{k-1}$ or $\alpha_{k-1} < \alpha^{*} < \alpha_k$).

{\color{black}To establish inequality \eqref{eq_mn*}, we bound each term in the double summation in \eqref{eq_=} and leveraging finite summation properties. 
{\color{black}Let  \( \mu_{\text{max}} = \max\left\{ |\mu_{m,n}| \mid 0 \leq m,n \leq N \right\}>0 \) (as $\mu_{m^{*},n^{*}}>0$) be a  finite constant.} For sufficiently large \( k \), we have \begin{equation}
    \label{eq_minmax} |\alpha_{k-1} - \alpha^*| < \min\left\{1,\frac{|\mu_{m^{*},n^{*}}|}{((N+1)^{2}-1)\mu_{\text{max}}} \right\}, \end{equation} 
    Then for any \( (m,n)\neq (m^{*},n^{*})\), each term in the summation thus satisfies
\begin{align}
|\mu_{m,n}| |\alpha_{k-1} - \alpha^*|^{m+2n} &\leq \mu_{\text{max}} |\alpha_{k-1} - \alpha^*|^{m+2n}\ (\text{by the definition of } \mu_{\text{max}})\nonumber\\
&\leq \mu_{\text{max}} |\alpha_{k-1} - \alpha^*|^{m^{*}+2n^{*}+1}\ (\text{by the definition of }m^{*},\,n^{*})\nonumber\\
& =\mu_{\text{max}}|\alpha_{k-1}-\alpha^{*}||\alpha_{k-1}-\alpha^{*}|^{m^{*}+2n^{*}}\nonumber\\
&\le\frac{|\mu_{m^{*},n^{*}}||\alpha_{k-1}-\alpha^{*}|^{m^{*}+2n^{*}}}{(N+1)^{2}-1}\ (\text{using \eqref{eq_minmax}}).\label{eq_bound_1}
\end{align}Substituting \eqref{eq_bound_1} into the double summation  \eqref{eq_=} 
for each \( (m,n)\in  [0, N] \times [0, N] \setminus \{(m^*, n^*)\}\), which contains \( (N+1)^2 - 1 \) terms, we obtain:}
\begin{align}
|P_k| &\leq \text{max}\{C_{1},\frac{\tau}{f_{2}(x^{*})}\}\frac{|\mu_{m^{*},n^{*}}||\alpha_{k-1}-\alpha^{*}|^{m^{*}+2n^{*}}+ |\mu_{m^{*},n^{*}}|\left| \alpha_{k-1} - \alpha^* \right|^{m^{*}+2n^{*}}}{6g'^2(\alpha^*)}\nonumber\\
&=\frac{\max\left\{ C_1, \frac{\tau}{f_2(x^*)} \right\} |\mu_{m^*, n^*}| |\alpha_{k-1} - \alpha^*|^{m^* + 2n^*}}{3g'^2(\alpha^*)},
\end{align}
confirming \eqref{eq_mn*}.

Substituting \eqref{eq_nosum} into \eqref{eq_Pk} derives
\begin{equation}
    \label{eq_chi_1_P_k}
|\alpha_{k+1}-\alpha^{*}|\le \frac{\text{max}\{C_{1},\frac{\tau}{f_{2}(x^{*})}\}|\mu_{m^{*},n^{*}}|}{3g'^{2}(\alpha^{*})}|\alpha_{k}-\alpha^{*}|^{2}|\alpha_{k-1}-\alpha^{*}|^{2}.
\end{equation}
Finally, taking $C_{2}=\chi/6g''^{2}(\alpha^{*})$ and $C_{3}=\text{max}\{C_{1},\frac{\tau}{f_{2}(x^{*})}\}|\mu_{m^{*},n^{*}}|/3g'^{2}(\alpha^{*})$, we complete the proof.

\end{proof}

\begin{corollary}
    \label{coro_1}
    If $\chi\neq 0$, the position of $\alpha_{k+1}$ generated by \eqref{eq:alpha sequence} is determined by the type of \textbf{Case 1} and \textbf{Case 2}.
 \begin{itemize}
    \item If $\chi > 0$:
    \begin{itemize}
        \item When $\alpha_k$ and $\alpha_{k-1}$ satisfy \textbf{Case 1}, it leads to \textbf{Case 1.2} ($\alpha_{k+1} < \alpha^{*}$).
        \item When $\alpha_k$ and $\alpha_{k-1}$ satisfy \textbf{Case 2}, it leads to \textbf{Case 2.1} ($\alpha_{k+1} \geq \alpha^{*}$).
    \end{itemize}
    
    \item If $\chi < 0$:
    \begin{itemize}
        \item When $\alpha_k$ and $\alpha_{k-1}$ satisfy \textbf{Case 1}, it leads to \textbf{Case 1.1} ($\alpha_{k+1} \geq \alpha^{*}$).
        \item When $\alpha_k$ and $\alpha_{k-1}$ satisfy \textbf{Case 2}, it leads to \textbf{Case 2.2} ($\alpha_{k+1} < \alpha^{*}$).
    \end{itemize}
\end{itemize}
\end{corollary}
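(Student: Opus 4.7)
The plan is to exploit the asymptotic identity
\[
\alpha_{k+1}-\alpha^{*} = P_k\,(\alpha_k-\alpha^{*})^{2}\,(\alpha_{k-1}-\alpha^{*}),
\]
which was already established in the proof of Lemma~\ref{lemma:1} (see \eqref{eq_Pk}), and to read off the sign of $\alpha_{k+1}-\alpha^{*}$ from the signs of the three factors on the right-hand side. Since $(\alpha_k-\alpha^{*})^{2}\ge 0$, the sign of $\alpha_{k+1}-\alpha^{*}$ is controlled entirely by $\operatorname{sgn}(P_k)\cdot \operatorname{sgn}(\alpha_{k-1}-\alpha^{*})$, so the corollary reduces to pinning down these two signs.

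The first step is to show that $\operatorname{sgn}(P_k)=\operatorname{sgn}(\chi)$ for all sufficiently large $k$. From \eqref{eq_P}, $P_k$ differs from $\chi/(12g'^{2}(\alpha^{*}))$ only by the term $2(g'''(\alpha^{*})-g'''(\xi_k))g'(\alpha^{*})$ in the numerator and two $O(|\alpha_{k-1}-\alpha^{*}|)$ remainders. Theorem~\ref{theorem:accelerated convergence} gives $\alpha_{k-1}\to\alpha^{*}$, hence also $\xi_k\to\alpha^{*}$ since $\xi_k$ lies between $\alpha^{*}$ and $\alpha_{k-1}$; continuity of $g'''$ then forces $g'''(\xi_k)\to g'''(\alpha^{*})$. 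Because $g'(\alpha^{*})=f_2(x^{*})>0$, the denominator remains strictly positive in the limit, and $P_k\to \chi/(12g'^{2}(\alpha^{*}))$, whose sign equals $\operatorname{sgn}(\chi)$. A quantitative version (analogous to the derivation of \eqref{eq_taking2}, using the same $|O(|\alpha_{k-1}-\alpha^{*}|)|\le d|\alpha_{k-1}-\alpha^{*}|$ bound) yields a threshold $k^{\star}$ beyond which $\operatorname{sgn}(P_k)=\operatorname{sgn}(\chi)$.

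The second step is a straightforward case split on $\operatorname{sgn}(\alpha_{k-1}-\alpha^{*})$. In \textbf{Case 1} ($\alpha_{k-1}<\alpha^{*}<\alpha_k$), this sign is negative, so $\operatorname{sgn}(\alpha_{k+1}-\alpha^{*})=-\operatorname{sgn}(\chi)$: if $\chi>0$ we land in \textbf{Case 1.2}, and if $\chi<0$ in \textbf{Case 1.1}. In \textbf{Case 2} ($\alpha^{*}<\alpha_k<\alpha_{k-1}$), the sign is positive, so $\operatorname{sgn}(\alpha_{k+1}-\alpha^{*})=\operatorname{sgn}(\chi)$: $\chi>0$ yields \textbf{Case 2.1}, and $\chi<0$ yields \textbf{Case 2.2}. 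These four conclusions match the four bullets in the corollary verbatim.

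The main obstacle is essentially bookkeeping: making the ``sufficiently large $k$'' threshold precise so that the perturbation $2(g'''(\alpha^{*})-g'''(\xi_k))g'(\alpha^{*})$ together with the two $O(|\alpha_{k-1}-\alpha^{*}|)$ terms is strictly dominated in magnitude by $|\chi|$ and $12g'^{2}(\alpha^{*})$, respectively, so that the sign of the numerator of $P_k$ is dictated by $\chi$ and the denominator stays positive. Since $\chi\neq 0$ by hypothesis and all perturbations go to zero with $\alpha_{k-1}\to\alpha^{*}$, the quantitative bounds already deployed in Lemma~\ref{lemma:1} supply this threshold at essentially no extra cost, so the whole argument is short once the identity \eqref{eq_Pk} is in hand.
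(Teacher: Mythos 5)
Your proposal is correct and follows essentially the same route as the paper: both read off $\operatorname{sgn}(\alpha_{k+1}-\alpha^{*})$ from the factorization \eqref{eq_Pk}, establish $\operatorname{sgn}(P_k)=\operatorname{sgn}(\chi)$ for large $k$ via \eqref{eq_P} and the bound \eqref{eq_1}, and then split on the sign of $\alpha_{k-1}-\alpha^{*}$ according to \textbf{Case 1} versus \textbf{Case 2}. The only difference is cosmetic — you spell out the quantitative domination of the perturbation terms slightly more explicitly than the paper's two-line proof.
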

\begin{proof}
  By \eqref{eq_Pk}, we have 
  \begin{equation}
    \label{eq_9}
  \text{sgn}(\alpha_{k+1}-\alpha^{*})=\text{sgn}(P_{k}(\alpha_{k}-\alpha^{*})^{2}(\alpha_{k-1}-\alpha^{*}))=\text{sgn}(P_{k})\text{sgn}(\alpha_{k-1}-\alpha^{*})\left|\text{sgn}(\alpha_{k}-\alpha^{*})\right|,
  \end{equation}
  where 
  \[
\operatorname{sgn}(x) = 
\begin{cases} 
1 & \text{if } x > 0, \\
0 & \text{if } x = 0, \\
-1 & \text{if } x < 0.
\end{cases}
\]
  Since $\chi\neq 0$, it follows from \eqref{eq_P} and \eqref{eq_1} that 
 \begin{equation}
    \label{eq_10}
  \text{sgn}(P_{k})=\text{sgn}(\chi)
 \end{equation}
  holds for sufficient large $k$.
  Substituting \eqref{eq_10} into \eqref{eq_9} gives
\begin{equation}
    \label{eq_11}
\text{sgn}(\alpha_{k+1}-\alpha^{*})=\text{sgn}(\chi)\text{sgn}(\alpha_{k-1}-\alpha^{*})\left|\text{sgn}(\alpha_{k}-\alpha^{*})\right|.
\end{equation}
The corollary follows directly from \eqref{eq_11}, which completes the proof.
\end{proof}
\begin{theorem}
    \label{Theorem_final}
For any $x_{-1}\in\mathcal{F}$, let \(\{\alpha_k\}\) be the sequence generated by the accelerated Dinkelbach algorithm in Algorithm~\ref{Algorithm: Accelerated Dinkelbach}.  
When \(\chi \neq 0\) (defined in \eqref{eq:chi}), \(\{\alpha_k\}\) exhibits asymptotic periodicity 
with respect to  its convergence rates.  Let \(L\) denote the period of this asymptotic periodicity. The average convergence order per iteration is as follows:
\begin{itemize}
    \item If $\chi>0$, $L=1 \ \text{or}\ 2$ and the average convergence order per iteration is $1+\sqrt{2}\ \text{or}\  \sqrt{5}$, respectively.
    \item If $\chi<0$, $L=3$ and the average convergence order per iteration is \(\sqrt[3]{12}\).
\end{itemize}
When \(\chi = 0\), if \(\{\alpha_{q}^{-}\}\) is a finite set, the asymptotic average convergence order of \(\{\alpha_k\}\) is at least \(1+\sqrt{3}\). Conversely, if \(\{\alpha_{q}^{-}\}\) is an infinite set, the asymptotic average convergence order of \(\{\alpha_k\}\) is at least \( \sqrt{6} \).
\end{theorem}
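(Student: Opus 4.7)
The plan is to convert the pointwise error bounds from Lemmas~\ref{lemma:forgotten} and \ref{lemma:1} into linear inequalities on the logarithmic errors $a_k := \log(1/|\alpha_k-\alpha^*|)$, identify the asymptotic positional pattern of $\{\alpha_k\}$ relative to $\alpha^*$ via Corollary~\ref{coro_1}, and extract the average convergence order from the leading eigenvalue of the period-product of the associated $2\times 2$ transition matrices. Each step type yields an inequality of the form $a_{k+1} \geq c_1 a_k + c_0 a_{k-1} - O(1)$, which I encode as the action on the vector $(a_{k+1},a_k)^\top$ of one of
\[
A = \begin{pmatrix} 2 & 1 \\ 1 & 0 \end{pmatrix},\qquad
A' = \begin{pmatrix} 2 & 2 \\ 1 & 0 \end{pmatrix},\qquad
B = \begin{pmatrix} 2 & 0 \\ 1 & 0 \end{pmatrix},
\]
where $A$ corresponds to \eqref{eq:alpha sequence} when $\chi\neq 0$, $A'$ to \eqref{eq:alpha sequence} when $\chi=0$, and $B$ to \eqref{eq:min tangent}.

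First, I would classify the long-run sequence of iterate positions using Corollary~\ref{coro_1}, Proposition~\ref{prop}, and the algorithmic rule that any below-$\alpha^*$ iterate produced by \eqref{eq:alpha sequence} is immediately followed by one step of \eqref{eq:min tangent}. When $\chi>0$, Case~1 sends the next iterate below $\alpha^*$ (which triggers \eqref{eq:min tangent} returning above) while Case~2 keeps it above, so either the sequence stays above from some index onward (pattern $A,A,A,\ldots$, hence $L=1$), or it alternates above/below asymptotically (pattern $A,B,A,B,\ldots$, hence $L=2$). When $\chi<0$, Case~1 keeps the iterate above and Case~2 flips it below, producing the forced asymptotic block (below, above, above) repeating with period $L=3$ and cycle $A,A,B$. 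Lemma~\ref{lemma:mechanism} guarantees that \eqref{eq:cond} is active in every Case~2 situation for $k$ large, so these patterns indeed lock in after a finite transient.

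Second, I would read off the per-iteration average order as $\lambda^{1/L}$, where $\lambda$ is the dominant eigenvalue of the period-product matrix. The computations give
\[
A:\ \text{eigenvalues } 1\pm\sqrt{2};\qquad
BA=\begin{pmatrix}4&2\\2&1\end{pmatrix}:\ \text{eigenvalues } 0,5;\qquad
BA^2=\begin{pmatrix}10&4\\5&2\end{pmatrix}:\ \text{eigenvalues } 0,12,
\]
yielding exactly $1+\sqrt{2}$, $\sqrt{5}$, and $\sqrt[3]{12}$ for $L=1,2,3$, as claimed. For $\chi=0$, Corollary~\ref{coro_1} is silent on the sign of $\alpha_{k+1}-\alpha^*$, so the analysis splits on $\{\alpha_q^-\}$. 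If $\{\alpha_q^-\}$ is finite, only $A'$ applies eventually, and $A'$ has eigenvalues $1\pm\sqrt{3}$, so the order is at least $1+\sqrt{3}$. If $\{\alpha_q^-\}$ is infinite, the worst admissible long-run pattern is strict alternation between \eqref{eq:alpha sequence} and \eqref{eq:min tangent}, governed by $BA'=\begin{pmatrix}4&4\\2&2\end{pmatrix}$ with eigenvalues $0$ and $6$, giving order at least $\sqrt{6}$; any inserted block of consecutive above-$\alpha^*$ iterates amounts to additional $A'$-factors, and since the dominant eigenvalue of $A'$ alone is $1+\sqrt{3}>\sqrt{6}$, such insertions can only strengthen the asymptotic rate.

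The main obstacle is rigorously justifying the \emph{asymptotic} periodicity itself: one must show that for each $\chi$-regime the transient ends after finitely many iterations, so that the deterministic sign propagation of Corollary~\ref{coro_1} and the claimed matrix cycles truly describe the tail behavior. Equally delicate is extracting the per-iteration order from an inequality-based matrix recurrence; this relies on the standard fact that if $v_{k+1}\geq M v_k - C\mathbf{1}$ componentwise with $M$ having a simple dominant eigenvalue $\lambda>1$ and if the initial vector has a nonzero component along the corresponding eigenvector (a non-degeneracy condition trivially satisfied here since errors are strictly positive after the transient), then $\|v_k\|\geq c\lambda^k$ for some $c>0$. A subsidiary technical point in the $\chi=0$, infinite-$\{\alpha_q^-\}$ case is verifying that strict alternation realizes the minimum per-iteration rate among all admissible interleavings of $A'$ and $B$; this follows by checking that replacing a $B$-factor by $A'$, or inserting an extra $A'$ into a product, strictly increases the per-iteration spectral radius, which is an elementary consequence of the explicit forms of $A'$ and $B$.
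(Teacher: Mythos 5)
Your proposal is correct and follows essentially the same route as the paper: the same positional case analysis via Proposition~\ref{prop}, Corollary~\ref{coro_1}, and Lemma~\ref{lemma:mechanism} to lock in the periodic patterns, and the same per-step error bounds from Lemmas~\ref{lemma:forgotten} and~\ref{lemma:1} composed over one period (the paper does this for $\chi=0$ by explicitly solving the linear recurrence for the exponents $d_q,e_q,f_q$ and showing $f_q\geq\sqrt{6}^{\,k_{q+1}-k_q}$, which is exactly your eigenvalue computation for $(A')^{m-1}B$ in disguise). Your transition-matrix packaging reproduces all the claimed orders, including the verification that the shortest block $m=2$ minimizes the per-iteration rate in the infinite-$\{\alpha_q^-\}$ case.
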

    \begin{proof}
Assume $\{\alpha_k\}$ is an infinite sequence. 
First, we consider the case when $\chi \neq 0$. 
 For a sufficiently large $k_{0}$ (Lemma~\ref{lemma:mechanism}, Lemma~\ref{lemma:1}, and Corollary~\ref{coro_1} are valid for all $k\ge k_{0}$), by the positions of $\alpha_{k_{0}-1}$ and $\alpha_{k_{0}}$, we  determine 
the subsequent generations.

The analysis is partitioned into two $\chi$-sign cases: 
    (1) $\chi >0$, (2) $\chi < 0$
and three positional configurations: 
(i) $\alpha^* < \alpha_{k_{0}} < \alpha_{k_{0}-1}$,
(ii) $\alpha_{k_{0}} < \alpha^* < \alpha_{k_{0}-1}$,
(iii) $\alpha_{k_{0}-1} < \alpha^* < \alpha_{k_{0}}$.
We summarize in Table \ref{table:Convergence rates} the length of period under the partitioned configurations described above, followed by rigorous derivation of these results.
\begin{table}[ht]
\centering
\setlength{\tabcolsep}{3pt} 
\caption{Length of period $L$  based on signal of $\chi$ and $\alpha^{*}$, $\alpha_{k_{0}-1}$, and $\alpha_{k_{0}}$ positions }
\label{table:Convergence rates}
    \begin{tabular}{cccc} 
\toprule

& (i) $\alpha^* < \alpha_{k_{0}} < \alpha_{k_{0}-1}$ & (ii) $\alpha_{k_{0}} < \alpha^* < \alpha_{k_{0}-1}$ & (iii) $\alpha_{k_{0}-1} < \alpha^* < \alpha_{k_{0}}$ \\
\midrule
$(1)\,\chi > 0$ & $1$ &$2$&  $2$  \\
\hline
$(2)\,\chi < 0$ & $3$ & $3$ & $3$ \\
\bottomrule
\end{tabular}
\end{table}
\begin{itemize}
    \item[1-i)]($\chi> 0$ and $\alpha^{*}<\alpha_{k_{0}}<\alpha_{k_{0}-1}$):
    By our update rules,
    $\alpha_{k_{0}+1}$ is generated by \eqref{eq:alpha sequence}. Corollary~\ref{coro_1} guarantees $\alpha_{k_{0}+1} \ge \alpha^{*}$ (\textbf{Case 2.1}). Lemma~\ref{lemma:mechanism} ensures that $\alpha_{k_{0}+1}$ and $\alpha_{k_{0}}$ satisfy \eqref{eq:cond}, so $\alpha_{k_{0}+2}$ is generated by
    \eqref{eq:alpha sequence} and also falls into
    \textbf{Case 2.1} by Corollary~\ref{coro_1}. Thus, we confirm this subcase exhibits a period $\{\alpha_{k_{0}+1+n}\}_{n=0,1,\cdots }$, where each $\alpha_{k_{0}+1+n}$ is generated by \eqref{eq:alpha sequence}.
    \item[1-ii)]($\chi> 0$ and $\alpha_{k_{0}}<\alpha^{*}<\alpha_{k_{0}-1}$): By our update rules,  $\alpha_{k_{0}+1}$ is generated via \eqref{eq:min tangent} and $\alpha_{k_{0}+1}>\alpha^{*}$. Subsequently, $\alpha_{k_{0}+2}$ is generated by  \eqref{eq:alpha sequence} and  falls into \textbf{Case 1.2} ($\alpha_{k_{0}+2}< \alpha^{*}$) by Lemma~\ref{coro_1}.  Then, $\alpha_{k_{0}+3}\ge \alpha^{*}$ via \eqref{eq:min tangent} and $\alpha_{k_{0}+4}$ (constructed by~\eqref{eq:alpha sequence}) also falls into \textbf{Case 1.2}. This subcase exhibits a period $\{\alpha_{k_{0}+1+2n},\alpha_{k_{0}+2+2n}\}_{n=0,1,\cdots }$, where each $\alpha_{k_{0}+1+n}$ and $\alpha_{k_{0}+2+n}$ are generated by \eqref{eq:min tangent} and \eqref{eq:alpha sequence}, respectively.
    \item[1-iii)]($\chi> 0$ and $\alpha_{k_{0}-1}<\alpha^{*}<\alpha_{k_{0}}$): This subcase  coincides with 1-ii) after reindexing by letting $k_{0}=k_{0}+1$. 
     \item[2-i)]($\chi< 0$ and $\alpha^{*}<\alpha_{k_{0}}<\alpha_{k_{0}-1}$): By our update rules, 
     $\alpha_{k_{0}+1}$ is generated by \eqref{eq:alpha sequence}. Corollary~\ref{coro_1} implies $\alpha_{k_{0}+1}< \alpha^{*}$ (\textbf{Case 2.2}). Subsequently,
     $\alpha_{k_{0}+2}$ follows \eqref{eq:min tangent}. $\alpha_{k_{0}+3}$ is generated by \eqref{eq:alpha sequence} (\textbf{Case 1.1}) and $\alpha_{k_{0}+3}\ge \alpha^{*}$. Then $\alpha_{k_{0}+4}\le \alpha^{*}$ is also given by \eqref{eq:alpha sequence} (\textbf{Case 2.2}). This subcase gives a 3-step period $\{\alpha_{k_{0}+2+3n},\alpha_{k_{0}+3+3n}, \alpha_{k_{0}+4+3n}\}_{n=0,1,\cdots}$, where each $\alpha_{k_{0}+2+3n},\alpha_{k_{0}+3+3n}, \,\text{and}\,\alpha_{k_{0}+4+3n}$ are generated via \eqref{eq:min tangent}, \eqref{eq:alpha sequence} and \eqref{eq:alpha sequence}, respectively.
    \item[2-ii)]($\chi< 0$ and $\alpha_{k_{0}}<\alpha^{*}<\alpha_{k_{0}-1}$): This subcase  coincides with 2-i) after reindexing by letting $k_{0}=k_{0}+1$.  
    \item[2-iii)]($\chi< 0$ and $\alpha_{k_{0}-1}<\alpha^{*}<\alpha_{k_{0}}$): This subcase  coincides with 2-i) after reindexing by letting $k_{0}=k_{0}+2$.  
\end{itemize}
Now we derive the average convergence order as below.
\begin{itemize}
    \item If $L=1$ (1-i), then by Theorem~\ref{lemma:1+sqrt2}, each step achieves a convergence order of $1+\sqrt{2}$.
    \item If $L=2$ (1-ii, 1-iii), by above, for $n=0,1,\cdots $, we have
    \begin{align}
    |\alpha_{k_{0}+1+2n}-\alpha^{*}|&\le C_{1}|\alpha_{k_{0}+2n}-\alpha^{*}|^{2} \ (\text{by Lemma~\ref{lemma:forgotten}}),\label{eq_31} \\
    |\alpha_{k_{0}+2+2n}-\alpha^{*}|&\le C_{2}|\alpha_{k_{0}+1+2n}-\alpha^{*}|^{2}|\alpha_{k_{0}+2n}-\alpha^{*}| \ (\text{by Lemma~\ref{lemma:1}}).\label{eq_32}
    \end{align}
    Substituting \eqref{eq_31} into \eqref{eq_32} yields
    \[
    |\alpha_{k_{0}+2+2n}-\alpha^{*}|\le C_{1}^{2}C_{2}|\alpha_{k_{0}+2n}-\alpha^{*}|^{5}, \ n=0,1,\cdots,\]
    which implies that each step achieves an average convergence order of $\sqrt{5}$.
    \item If $L=3$ (2-i, 2-ii, 2-iii), for  $n=0,1,\cdots $, we have 
    \begin{align}
        |\alpha_{k_{0}+2+3n}-\alpha^{*}|&\le C_{1}|\alpha_{k_{0}+1+3n}-\alpha^{*}|^{2} \ (\text{by Lemma~\ref{lemma:forgotten}}),\label{eq_33} \\
    |\alpha_{k_{0}+3+3n}-\alpha^{*}|&\le C_{2}|\alpha_{k_{0}+2+3n}-\alpha^{*}|^{2}|\alpha_{k_{0}+1+3n}-\alpha^{*}| \ (\text{by Lemma~\ref{lemma:1}}),\label{eq_34}\\
    |\alpha_{k_{0}+4+3n}-\alpha^{*}|&\le C_{2}|\alpha_{k_{0}+3+3n}-\alpha^{*}|^{2}|\alpha_{k_{0}+2+3n}-\alpha^{*}| \ (\text{by Lemma~\ref{lemma:1}}).\label{eq_35}
    \end{align}
    Substituting \eqref{eq_33} and \eqref{eq_34} into \eqref{eq_35} gives
    \[
    |\alpha_{k_{0}+4+3n}-\alpha^{*}|\le C_{1}^{5}C_{2}^{3}|\alpha_{k_{0}+1+3n}|^{12},\ n=0,1,\cdots,\]
    which derives that each step achieves an average order of $\sqrt[3]{12}$.
\end{itemize}

Second, we consider $\chi=0$. 
If $\{\alpha_{q}^{-}\}$ (corresponding to negative $g$-values) is finite, then there exists a sufficiently large $k_{0}$ such that $\alpha_{k}\ge \alpha^{*}$ for all $k\ge k_{0}$. By Lemma~\ref{lemma:1}, we obtain for $n=0,1,\cdots$, 
\begin{equation}
\label{eq_1+sqrt3}
    |\alpha_{k_{0}+n}-\alpha^{*}|\le C_{3}|\alpha_{k_{0}+n-1}-\alpha^{*}|^{2}|\alpha_{k_{0}+n-2}-\alpha^{*}|^{2}.
\end{equation}
Let $\beta$ denote 
the asymptotic order. 
 This recurrence relation \eqref{eq_1+sqrt3} implies that \(\beta\) satisfies the characteristic equation \(\beta^2 - 2\beta - 2 = 0\), whose larger root is \(1 + \sqrt{3}\). Consequently, the average convergence order is at least \(1 + \sqrt{3} > \sqrt{5}\).

Subsequently, suppose that $\{\alpha_{q}^{+}\}$ is infinite. For a sufficiently large $k_0$, let $\{\alpha_{k_q}\}$ denote the sequence $\{\alpha_{k_0+q}^{-}\}$ with $q=0,1,\cdots$. By Proposition~\ref{prop}, we have $k_{q+1} - k_q \ge 2$. 
{\color{black}Applying Lemmas \ref{lemma:forgotten} and \ref{lemma:1}, 
we derive the following chain of inequalities for the iterations from $k_q$ to $k_{q+1}$:}
\begin{align}
|\alpha_{k_q+1} - \alpha^*| &\le C_1 |\alpha_{k_q} - \alpha^*|^2, \nonumber\\
|\alpha_{k_q+2} - \alpha^*| &\le C_3 |\alpha_{k_q+1} - \alpha^*|^2 |\alpha_{k_q} - \alpha^*|^2, \nonumber\\
&\vdots \nonumber\\
|\alpha_{k_{q+1}} - \alpha^*| &\le C_3 |\alpha_{k_{q+1}-1} - \alpha^*|^2 |\alpha_{k_{q+1}-2} - \alpha^*|^2. \nonumber
\end{align}
By recursively substituting the first $k_{q+1} - k_q - 1$ inequalities into the last one, we obtain
\[
|\alpha_{k_{q+1}} - \alpha^*| \leq C_1^{d_q} \cdot C_3^{e_q} \cdot \left| \alpha_{k_q} - \alpha^* \right|^{f_q},
\]
where the exponents satisfy
\begin{align}
d_q &= \frac{(1 + \sqrt{3})(1 + \sqrt{3})^{k_{q+1}-k_q-1} + (\sqrt{3}-1)(1 - \sqrt{3})^{k_{q+1}-k_q-1}}{2\sqrt{3}}, \nonumber\\
e_q &= \frac{(\sqrt{3} + 2)(1 + \sqrt{3})^{k_{q+1}-k_q-1} + (\sqrt{3} - 2)(1 - \sqrt{3})^{k_{q+1}-k_q-1}-1}{3}, \nonumber\\
f_q &= \frac{(\sqrt{3} + 2)(1 + \sqrt{3})^{k_{q+1}-k_q-1} + (\sqrt{3} - 2)(1 - \sqrt{3})^{k_{q+1}-k_q-1}}{\sqrt{3}}. \nonumber
\end{align}
It is straightforward to verify that 
\[
d_{q},e_{q}\le f_{q}, \, \forall k_{q+1}-k_q\ge 2.\]
Let
\[
\overline{C_{1}}=\max\{1,C_{1}\},\  \overline{C_{3}}=\max\{1,C_{3}\}.\]
We have the following upper estimate of $|\alpha_{k_{q+1}}-\alpha^{*}|$: 
\begin{align}
|\alpha_{k_{q+1}}-\alpha^{*}|&\le 
 C_1^{d_q} \cdot C_3^{e_q} \cdot \left| \alpha_{k_q} - \alpha^* \right|^{f_q}\nonumber\\
 &\le 
\overline{C_{1}}^{d_q} \cdot \overline{C_{3}}^{e_q} \cdot \left| \alpha_{k_q} - \alpha^* \right|^{f_q}\,(\text{using }\overline{C_{1}}\ge C_{1},\,\overline{C_{3}}\ge C_{3})\nonumber\\
 &\le \overline{C_{1}}^{f_q} \cdot \overline{C_{3}}^{f_q} \cdot \left| \alpha_{k_q} - \alpha^* \right|^{f_q}\,(\text{using }\overline{C_{1}}\ge 1,\,\overline{C_{3}}\ge 1\text{, and }d_{q},e_{q}\le f_{q})\nonumber
 \\&=\left(\overline{C_{1}}\overline{C_{3}}( \alpha_{k_q} - \alpha^* )\right)^{f_q},\nonumber
\end{align}
which indicates that the average convergence order is $(f_{q})^{\frac{1}{k_{q+1}-k_{q}}}$.
{\color{black}
Now we determine a lower bound for this average order.

Rewriting $f_q$ as
\[
f_q = \frac{\sqrt{6}^{k_{q+1}-k_q}}{\sqrt{3}} \left( \frac{\sqrt{3}+2}{1+\sqrt{3}} \left( \frac{1+\sqrt{3}}{\sqrt{6}} \right)^{k_{q+1}-k_q} + \frac{\sqrt{3}-2}{\sqrt{3}-1} \left( \frac{1-\sqrt{3}}{\sqrt{6}} \right)^{k_{q+1}-k_q} \right).
\]
We observe that $(1+\sqrt{3})/\sqrt{6} > 1$, $|1-\sqrt{3}|/\sqrt{6} < 1$, and the coefficient of the second term is negative. For $k_{q+1} - k_q \geq 2$, we have
\begin{equation}
\nonumber
f_q \geq \frac{\sqrt{6}^{k_{q+1}-k_q}}{\sqrt{3}} \left( \frac{\sqrt{3}+2}{1+\sqrt{3}} \left( \frac{1+\sqrt{3}}{\sqrt{6}} \right)^2 + \frac{\sqrt{3}-2}{\sqrt{3}-1} \left( \frac{1-\sqrt{3}}{\sqrt{6}} \right)^2 \right) = \sqrt{6}^{k_{q+1}-k_q}. 
\end{equation}
 Therefore, the average convergence order per iteration satisfies:
\[
f_q^{1/(k_{q+1}-k_q)} \geq \sqrt{6}.
\]
This completes the proof.
}
\end{proof}

Subsequently, we give some numerical examples. The calculations are performed in Mathematica 12.3.1.0.

\begin{example}
Table~\ref{tab:rho_convergence} illustrates the effects of varying $\rho$ values on the convergence process for the function $g(\alpha) = e^{0.5\alpha} + 5\alpha - 9$ with initial value $\alpha_{-1} = 11$. 
{\color{black}
The results indicate that the algorithm is not very sensitive to the specific value of $\rho$, although values close to 1 generally yield better performance.}

Moreover, 
the result also reveals an asymptotic, periodic acceleration pattern.
Given that $g'(\alpha)=\frac{1}{2}e^{0.5\alpha}+5$, $g''(\alpha)=\frac{1}{4}e^{0.5\alpha}$, and $g'''(\alpha)=\frac{1}{8}e^{0.5\alpha}$,  we can compute
\begin{align}
\chi&=3g''^{2}(\alpha^{*})-2g'(\alpha^{*})g'''(\alpha^{*})\nonumber\\
&=\frac{1}{16}e^{\frac{1}{2}\alpha^{*}}(e^{\frac{1}{2}\alpha^{*}}-20)< 0\nonumber.
\end{align}
{\color{black}
The observed period of 3 matches the theoretical prediction for the case \(\chi<0\) in Theorem \ref{Theorem_final}.}
\end{example}

    \begin{table}[htbp]
\centering
\caption{Convergence behavior  with various $\rho$ values}
\label{tab:rho_convergence}
\sisetup{
  exponent-product = \times,
  table-format = -1.3e-1,
  table-number-alignment = center,
  table-space-text-pre = \textminus,
  table-align-exponent = false
}
\begin{tabular}{c 
    S[table-format=1.4e1] 
    S[table-format=1.4e1] 
    S[table-format=-1.4e1] 
    S[table-format=-1.4e1] 
    S[table-format=-1.4e1]
}
\toprule
\multirow{2}{*}{$k$} & \multicolumn{5}{c}{$\rho$} \\
\cmidrule(lr){2-6}
 & {1.00001} & {1.1} & {2} & {10} & {100} \\
\midrule

-1  & 2.91e2  & 2.91e2  & 2.91e2  & 2.91e2  & 2.91e2  \\
0  & 1.13e2  & 1.13e2  & 1.13e2  & 1.13e2  & 1.13e2  \\
1  & 1.61e1  & 1.61e1  & 4.36e1  & 4.36e1  & 4.36e1  \\
2  & -1.17e0 & -1.17e0 & 1.36e1  & 1.36e1  & 1.36e1  \\
3  & 9.22e-3 & 9.22e-3 & 1.62e0  & 1.62e0  & 1.62e0  \\
4  & 1.43e-8 & 1.43e-8 & -2.47e-1 & 1.90e-2 & 1.90e-2 \\
5  & -2.74e-22 & -2.74e-22 & 4.21e-4 & -2.27e-4 & 2.51e-6 \\
6  & 5.25e-46 & 5.25e-46 & 6.33e-12 & 3.60e-10 & -3.33e-10 \\
7  & 1.09e-116 & 1.09e-116 & -2.45e-30 & 4.26e-27 & 7.71e-22 \\
8  & -9.08e-282 & -9.08e-282 & 4.17e-62 & -9.47e-67 & 2.87e-56 \\
9 & \multicolumn{1}{c}{—}    & \multicolumn{1}{c}{—}   & 6.15e-157 & 6.25e-135 & -9.19e-137 \\
10 &    \multicolumn{1}{c}{—}      &   \multicolumn{1}{c}{—}       & -2.78e-339 &    5.35e-339      & 5.89e-275 \\
\bottomrule
\end{tabular}
\end{table}

\begin{example}
Table~\ref{tab:g_values_shifted_init} illustrates different period lengths determined by distinct initial points. For simplicity, we define $g$ as 
\begin{align}
g(\alpha)&=\alpha \arctan(\alpha) + 2\alpha - \frac{1}{2}\ln(1+\alpha^2),\nonumber
\end{align}
The initial points are set to  $5$ and $7$ and  $\rho$ is set to 1.00001.
Clearly, $g(0)=0$. Since
$g'(\alpha)=\arctan(\alpha)+2$, $g''(\alpha)=1/(\alpha^{2}+1)$ and $g'''(\alpha)=-2\alpha/(\alpha^{2}+1)^2$, $g$ is convex and increases on $[0,\infty)$.  We compute $\chi$ (defined in \eqref{eq:chi}) as follows.
\[
\chi=3g''^{2}(0)-2g'(0)g'''(0)=3>0.
\]
Notice that when $\alpha_{-1}=-5$, $g(\alpha_{4})\le 0$ (i.e., $\alpha_{4}\le \alpha^{*}$), which coincides with $L=2$ in Theorem~\ref{Theorem_final}. However, when $\alpha_{-1}=7$, our algorithm exhibits a period 
$L=1$, which coincides with the case specified in Theorem \ref{Theorem_final}.
\end{example}
\begin{table}[h]
\centering 
\caption{Values of \( g(\alpha_k) \) with Different Initial Values}
\label{tab:g_values_shifted_init}
\begin{tabular}{@{}c>{\centering\arraybackslash}c>{\centering\arraybackslash}c@{}} 
\toprule
\( k \) & \( \alpha_{-1} = 5 \): $g(\alpha_{k})$ & \( \alpha_{-1} = 7 \): $g(\alpha_{k})$ \\
\midrule
-1   & \( 1.52 \times 10^{1} \)              & \( 2.20 \times 10^{1} \) \\
0   & \( 1.08\times 10^{0}  \)              & \( 1.30 \times 10^{0} \) \\
1   & \( 6.70\times 10^{-2}  \)            & \( 9.45 \times 10^{-2} \) \\
2   & \( 7.62\times 10^{-5}  \)         & \( 1.80 \times 10^{-4} \) \\
3   & \( 6.11 \times 10^{-12} \) & \( 4.82 \times 10^{-11} \) \\
4   & \( -2.19 \times 10^{-28} \) & \( 9.10 \times 10^{-28} \) \\
\bottomrule
\end{tabular}
\end{table}
\begin{example}
This example compares our algorithm and original Dinkelbach method under special case ($\chi=0$). Specifically, let $g$ be 
    \[
    g(\alpha)=\alpha^{3}+\alpha^{2}+\alpha.
    \]
    Clearly $g(0)=0$. 
    With $g'(\alpha)=3\alpha^{2}+2\alpha+1$, $g''(\alpha)=6\alpha+2$ and $g'''(\alpha)=6$
    one can readily check that $g$ is convex and increasing on $[-1/3,10]$. Furthermore,
    \[
   \chi=3g''^{2}(0)-2g'(0)g'''(0)=0.
    \]
    We set the initial point to $10$ and $\rho=1.00001$. Table~\ref{tab:iteration_results} summarizes the results. For $k=9,10$, the convergence order is $90/33\approx 2.727$,  $246/90\approx 2.733$, which is approximately $1+\sqrt{3}$. This example is consistent with Theorem~\ref{Theorem_final} for the subcase $\chi=0$.
\end{example}
\begin{table}[h]
\centering 
\caption{Iteration results for Accelerated Dinkelbach method and original Dinkelbach method }\label{tab:iteration_results}
\begin{tabular}{@{}c>{\centering\arraybackslash}c>{\centering\arraybackslash}c@{}} 
\toprule
$k$ & $g(\alpha_{k})$ for Accelerated Dinkelbach & $g(\alpha_{k})$ for Dinkelbach \\
\midrule
-1 & $1.11 \times 10^{3}$ & $1.11 \times 10^{3}$ \\
0 & $3.29 \times 10^{2}$ & $3.29 \times 10^{2}$ \\
1 & $5.75 \times 10^{1}$ & $9.78 \times 10^{1}$ \\
2 & $1.12 \times 10^{1}$ & $2.91 \times 10^{1}$ \\
3 & $2.12 \times 10^{0}$ & $8.71 \times 10^{0}$ \\
4 & $3.60 \times 10^{-1}$ & $2.60 \times 10^{0}$ \\
5 & $2.56 \times 10^{-2}$ & $7.53 \times 10^{-1}$ \\
6 & $3.93 \times 10^{-5}$ & $1.84 \times 10^{-1}$ \\
7 & $9.36 \times 10^{-13}$ & $2.35 \times 10^{-2}$ \\
8 & $1.35 \times 10^{-33}$ & $5.26 \times 10^{-4}$ \\
9 & $1.60 \times 10^{-90}$ & $2.76 \times 10^{-7}$ \\
10 & $4.69 \times 10^{-246}$ & $7.64 \times 10^{-14}$ \\
11 & — & $5.84 \times 10^{-27}$ \\
12 & — & $3.41 \times 10^{-53}$ \\
13 & — & $1.16 \times 10^{-105}$ \\
14 & — & $1.35 \times 10^{-210}$ \\
\bottomrule
\end{tabular}
\end{table}

\section{Conclusion}\label{section:Conclusion}
This work significantly advances both theoretical foundations and computational efficiency of Dinkelbach-type methods for fractional programming. Building upon the classical Dinkelbach method (1967) and its interval variant (1991), we develop two accelerated frameworks.

{\color{black}
The accelerated interval Dinkelbach method produces monotonic and convergent sequences of upper and lower bounds, establishing superquadratic and cubic convergence rates, respectively, under the assumption that the parametric function $g$ is twice continuously differentiable.}

The accelerated Dinkelbach method, by contrast, generates a non-monotonic sequence that converges globally to the optimal value.
Under the assumption that $g$ is sufficiently differentiable, 
the average convergence order per iterate attains at least
 $\sqrt{5}$. 
 Moreover, for almost all $g$, the sequences generated by accelerated Dinkelbach method 
exhibit  asymptotic periodicity on the convergence order. 

These advancements overcome longstanding limitations in convergence rates while preserving computational tractability.
Important directions for future research include both theoretical refinements (simplifying the accelerated Dinkelbach algorithm and its convergence proof) and empirical validation (conducting more large-scale experiments to verify algorithmic stability).

Finally, we propose two open problems: 
\begin{enumerate}
\item Is it possible to achieve a \textit{higher order} of convergence without introducing substantial additional computational overhead?
\item Does there exist a monotonically accelerated Dinkelbach algorithm with higher convergence order?
\end{enumerate}

\section{Declarations}
\begin{itemize}
    
    \item Fund: This research was supported by Beijing Natural Science Foundation (QY24112) and the National Natural Science Foundation of China (12171021).
\end{itemize} 
\bibliography{main}%

\end{document}